%% file: main.tex
\documentclass{article}
\usepackage[T1]{fontenc}
\usepackage{iclr2027_conference,times}
\usepackage{amsmath,amssymb,amsthm,booktabs,graphicx,url,float}
\usepackage{hyperref}

\hypersetup{hidelinks,
  pdftitle={Conformal Coverage for Time Series: Validity and Inference},
  pdfauthor={Percy S. Zhai, Maggie Cheng, Wei Biao Wu}}
\newtheorem{theorem}{Theorem}
\newtheorem{proposition}[theorem]{Proposition}
\newtheorem{lemma}[theorem]{Lemma}

\newtheorem{assumption}{Assumption}
\theoremstyle{remark}

\newcommand{\E}{\mathbb E}
\renewcommand{\Pr}{\mathbb P}
\newcommand{\Var}{\operatorname{Var}}
\newcommand{\Cov}{\operatorname{Cov}}
\newcommand{\ind}{\mathbf 1}
\newcommand{\qhat}{\widehat q_n}
\newcommand{\coverage}{\widehat{\mathrm{Cov}}_{n,m}}

\title{Conformal Coverage for Time Series:\\Validity and Inference}
\author{Percy S. Zhai\textsuperscript{1}\qquad
Maggie Cheng\textsuperscript{2}\qquad
Wei Biao Wu\textsuperscript{1}\\
\normalfont\textsuperscript{1}University of Chicago (uchicago.edu)\\
\normalfont\textsuperscript{2}Illinois Institute of Technology (iit.edu)\\
{\normalfont\small\texttt{percy.zhai@chicagobooth.edu}\quad
\texttt{maggie.cheng@iit.edu}\quad\texttt{wbwu@uchicago.edu}}}
\iclrfinalcopy

\begin{document}
\raggedbottom
\setlength{\parskip}{5pt}
\maketitle
\fancyhead{}
\renewcommand{\headrulewidth}{0pt}

\begin{abstract}
Conformal prediction provides marginal coverage guarantees, yet
practitioners may wonder if the observed coverage is truly abnormal or
consistent with sampling variation. Inference for realized coverage has
received comparatively little attention, especially for time series.
We study split conformal prediction with adjacent calibration and test
sets of temporally dependent data. Using the functional dependence
measure, we derive non-asymptotic bounds on marginal coverage error
without mixing assumptions, which can be difficult to verify and may
fail even for simple short-memory models. We establish a Bahadur
representation to derive, to our knowledge, the first central limit
theorem for realized coverage of split conformal prediction under
temporal dependence. A consistent block-based estimator of the standard
error yields an asymptotically justified test. We further study long-memory time
series, which remain understudied in conformal prediction. For Gaussian
linear processes, we show how very strong temporal dependence can lead
to a non-Gaussian limiting law of realized coverage and establish
block-sampling inference with an estimated normalization. The resulting
theory explains how temporal dependence changes coverage uncertainty.
\end{abstract}

\input{introduction}
\input{setup}
\input{general}
\input{long_memory}

\input{experiments}

\section{Discussion}
\label{sec:discussion}
We develop non-asymptotic marginal coverage bounds and asymptotic inference
for realized coverage under functional dependence.
The marginal bounds broaden applicability beyond mixing rather than improve
the $O(\log n/n)$ coverage-loss rate available under geometric
$\beta$-mixing \citep{barber2026predictive}. For Gaussian linear
processes, block sampling extends this inference to long memory, including
non-Gaussian limiting distributions.
These results provide a reference for assessing whether an observed departure
from nominal coverage is consistent with sampling variation. The simulations
and electricity-demand example show that accounting for temporal dependence
can substantially change coverage uncertainty and the conclusions of coverage
tests. Our analysis assumes stationary data and a predictor trained independently
of the adjacent calibration and test windows. Extending realized-coverage
inference to nonstationary data and predictors trained or updated on the
same series is beyond the scope of this paper and is left for future work.

\clearpage
\setlength{\parskip}{0.5pc}
\bibliography{refs}
\bibliographystyle{iclr2027_conference}

\clearpage
\appendix
\input{app_general}
\input{app_long_memory}
\input{app_blocks}
\input{app_non_gaussian}
\input{experiments_appendix}
\input{app_real_data}
\end{document}

%% file: introduction.tex
\section{Introduction}
\label{sec:introduction}

Conformal prediction constructs prediction sets with a prescribed
coverage level without specifying a parametric model for the response
distribution \citep{vovk2005algorithmic,lei2018distribution}. For time
series, the observations used to calibrate the prediction sets and the
future observations are temporally dependent. This raises two questions.
First, does the prediction set still attain the nominal marginal
coverage? Second, how should we assess the fraction of observations
covered over a future test window? The purpose of this paper is to
answer these questions for split conformal prediction, with particular
attention to inference for realized coverage under long memory.

Consider prediction intervals for electricity demand with nominal
coverage of $90\%$, but realized coverage of $85\%$ over the next
evaluation window. Is this shortfall evidence of miscalibration, or can
it be explained by sampling variation under persistent forecast errors?
Answering this question requires the distribution of realized coverage,
which under long memory may fluctuate on a slower scale and need not
have a Gaussian limit.

Existing results establish conformal validity beyond exchangeability
and provide quantitative guarantees under temporal dependence
\citep{barber2023beyond,chernozhukov2018exact,oliveira2024split,barber2026predictive}.
Many time-series guarantees use mixing conditions \citep{xu2021conformal}. These conditions can
be difficult to verify and may fail even for elementary short-memory
models \citep{andrews1984nonstrong,han2023probability}. General validity frameworks need not
assume mixing, but their bounds still require suitable control of
dependence. We use the functional dependence measure (FDM) introduced
by \citet{wu2005nonlinear}, which describes the effect of changing an
innovation on future observations. This gives non-asymptotic bounds
on marginal coverage error for processes outside mixing assumptions.

Marginal validity alone does not determine the distribution of realized
coverage. \citet{oliveira2024split} also provide concentration bounds
for the covered fraction over a test window. Such bounds control large
deviations, whereas inference based on a limiting distribution provides
a reference law for testing an observed departure from the nominal
level. \citet{gazin2024asymptotics} studies this distributional question
under iid and exchangeable assumptions. For temporally dependent scores,
we establish a Bahadur representation of the calibration cutoff and
then derive a central limit theorem (CLT) for realized coverage.
The limiting variance is generally unknown; a block-based estimator
makes the corresponding standard error available in practice.

Related coverage-inference targets should be distinguished.
\citet{ramos2026transported} study the coverage probability conditional
on calibration, and \citet{cotton2026conformal} studies variation in
the marginal probability content of the estimated cutoff with an
independent test draw. Neither is the realized fraction over an adjacent
dependent test window. \citet{retzlaff2025backtesting} develop conformal
backtests for marginal and conditional calibration, including violation
counts, but do not derive the split-conformal realized-coverage CLT
studied here. \citet{halkiewicz2026conformal} uses physical dependence
and Bahadur expansions for rolling-origin conformal prediction; our
claim is therefore not the first use of FDM in conformal inference.
Online procedures that update their prediction rule to control
long-run coverage address a different setting
\citep{gibbs2024conformal}. We calibrate once and evaluate the resulting
prediction sets over the next window.

Our general FDM conditions allow long memory in the observations in
principle, but do not describe all degrees of persistence in the
coverage indicators. We therefore study Gaussian linear processes
separately. Long-memory time series remain understudied in conformal
prediction. Earlier theory does permit some long-memory settings;
for example, \citet{ren2024conformal} allows long-memory stochastic
volatility in a high-frequency event-study framework. This does not
provide a distributional theory for realized coverage over growing,
adjacent calibration and test windows. In our setting, absolute-error
scores yield Gaussian or non-Gaussian coverage limits, depending on
the strength of memory. Classical empirical-process and quantile
theory supplies the tools for identifying these limits
\citep{dehling1989empirical,coeurjolly2008bahadur}.

The non-Gaussian regime requires more than estimating a standard error.
We use the block-sampling method of \citet{zhang2013block} to estimate
the coverage distribution. Each historical block repeats calibration
followed by evaluation on an adjacent subwindow, with the same length
ratio as the target problem. The resulting block distribution permits
inference without a Gaussian approximation. Our proof combines the
coverage expansion with innovation coupling, connecting this method
to the functional-dependence approach used in the general results.

Our contributions can be summarized as follows:
\begin{enumerate}
\item \textbf{Marginal coverage.} We derive non-asymptotic marginal
coverage bounds using FDM, broadening applicability beyond mixing-based
assumptions, and allowing long-memory processes in principle.
\item \textbf{Inference for realized coverage.} We establish a Bahadur
representation and, to our knowledge, the first central limit theorem
for realized coverage of split conformal prediction under temporal
dependence. We provide a consistent block-based standard-error estimator
for asymptotically valid coverage testing.
\item \textbf{Long-memory inference.} We provide the first distributional
theory and block-sampling inference for realized coverage of split
conformal prediction under long memory that we are aware of.
For long-memory Gaussian linear processes, we derive central and
non-central limits. Building on \citet{zhang2013block}, we estimate the
memory parameter from calibration residuals and justify the resulting
normalization, yielding asymptotically valid tests across all three
long-memory regimes.
\end{enumerate}

Section~\ref{sec:setup} introduces the setup and functional dependence.
Section~\ref{sec:general} presents the general marginal-coverage bound,
Bahadur representation, CLT, and standard-error estimator.
Section~\ref{sec:long-memory} studies the long-memory linear model and
the block-sampling method. Proofs and supporting technical results
are in the appendix.

%% file: setup.tex
\section{Setup}
\label{sec:setup}

Let $Z_t=(X_t,Y_t)$ be a stationary time series, where
$X_t\in\mathbb R^d$ and $Y_t\in\mathbb R$. A predictor
$\widehat\mu$ trained on an independent dataset is treated as a fixed
function on the calibration and test sets. We use the nonconformity
score $V_t=s(X_t,Y_t)=|Y_t-\widehat\mu(X_t)|$; the process assumptions
below apply with this predictor fixed.

The calibration set consists of $V_1,\ldots,V_n$. For a nominal
coverage level $1-\alpha$, where $0<\alpha<1$, define
\begin{equation}
 k_n=\lceil(n+1)(1-\alpha)\rceil,\qquad
 \qhat=V_{(k_n)},\qquad
 \widehat C_n(x)=[\widehat\mu(x)-\qhat,\widehat\mu(x)+\qhat],
 \label{eq:procedure}
\end{equation}
where $V_{(k)}$ is the $k$th order statistic of the calibration set.
We use $\qhat=\infty$ if $k_n>n$. The estimated cutoff is
unchanged over the adjacent test set, indexed by $n+1,\ldots,n+m$.

This paper concerns two quantities associated with this prediction
interval. First, for a new data point, is the marginal coverage rate
$\Pr\{Y_{n+1}\in\widehat C_n(X_{n+1})\}$ close to $1-\alpha$?
Second, what is the asymptotic distribution of realized coverage on
the adjacent test set? Its \emph{realized coverage} is the fraction of
test responses covered:
\begin{equation}
 \coverage=\frac1m\sum_{t=n+1}^{n+m}
       \ind\{Y_t\in\widehat C_n(X_t)\}
 =\frac1m\sum_{t=n+1}^{n+m}\ind\{V_t\le\qhat\}.
 \label{eq:coverage-target}
\end{equation}

We next describe the functional dependence measure (FDM), the key
theoretical device of this paper \citep{wu2005nonlinear}. Write
$Z_t=G(\ldots,\varepsilon_{t-1},\varepsilon_t)$ and
$V_t=H(\ldots,\varepsilon_{t-1},\varepsilon_t)$, where $G$ and $H$
are measurable functions and $(\varepsilon_t)_{t\in\mathbb Z}$ are
iid random innovations, possibly vector-valued. These representations
are assumed well defined; the same function is used at every time.
Replace $\varepsilon_0$ by an independent copy $\varepsilon_0'$,
leaving all other innovations unchanged, and denote the resulting
variables by $Z_k^*$ and $V_k^*$. For $r\ge1$, define
\begin{equation}
 \delta_r^Z(k)=\|Z_k-Z_k^*\|_r,\qquad
 \delta_r^V(k)=\|V_k-V_k^*\|_r,\qquad k\ge0,
 \label{eq:fdm}
\end{equation}
where $\|U\|_r=(\E|U|^r)^{1/r}$, using the Euclidean norm for
vectors. We use $r=2$ unless stated otherwise. FDM measures how much
changing one innovation affects an observation $k$ steps later.
Faster decay means that this effect disappears more quickly; geometric
decay is typical of models satisfying geometric moment contraction.
The coupling follows the data-generating mechanism and can often be
bounded directly from a model's recursion or linear coefficients.

For a scalar process with finite variance, we call its memory short
when its autocovariances are absolutely summable. The long-memory
processes considered here have eventually positive, nonsummable
autocovariances. Summability of the $L^2$ FDM is a sufficient condition
for short memory, although failure of that condition alone does not
establish long memory. Long-memory models are used in financial and
geophysical applications \citep{zhang2013block}. Such dependence can change both the
normalization and the limiting distribution of sample statistics,
making the usual root-$n$ Gaussian approximation inappropriate
\citep{wu2003empirical}.

In the conformal prediction setup, we are concerned with the FDM of
score-threshold indicators:
\begin{equation}
 \theta_k=\sup_{v\in\mathbb R}
  \|\ind\{V_k\le v\}-\ind\{V_k^*\le v\}\|_2,
 \qquad \Theta=\sum_{k=0}^{\infty}\theta_k.
 \label{eq:indicator-fdm}
\end{equation}

Mixing conditions instead compare events in entire past and future
sigma-fields. Verifying their decay can be difficult, and even
elementary autoregressions with discrete innovations can fail to be
strongly mixing \citep{andrews1984nonstrong,han2023probability,wu2005nonlinear}.

The temporal dependence of the observations, nonconformity scores,
and threshold indicators need not be the same. Their FDMs are related
in a model-specific way; Lemma~\ref{lem:general-transfer} in
Appendix~\ref{app:general-tools} gives useful sufficient bounds.
Section~\ref{sec:general} assumes summable indicator FDM, which ensures
short memory of the threshold indicators while allowing long-memory
observations.

%% file: general.tex
\section{General time series}
\label{sec:general}

We study marginal coverage and realized-coverage inference under functional
dependence. Let $F(v)=\Pr(V_t\le v)$ be the marginal CDF of the
nonconformity scores defined in Section~\ref{sec:setup}.

\begin{assumption}[Nonconformity scores and functional dependence]
\label{ass:general}
The marginal CDF $F$ is continuous, and the threshold indicators have
summable functional dependence: $\Theta<\infty$, with $\Theta$ defined
in \eqref{eq:indicator-fdm}.
\end{assumption}

This summability condition implies short memory of each
threshold-indicator process and can be checked directly even when mixing
fails. Consider the stationary recursion
$Y_t=(Y_{t-1}+\varepsilon_t)/2$, with iid
$\varepsilon_t\sim\operatorname{Bernoulli}(1/2)$, and the fixed predictor
$\widehat\mu\equiv1/2$. Then $Y_t$ is uniform on $[0,1]$, the
nonconformity scores $V_t=|Y_t-1/2|$ are uniform on $[0,1/2]$, and
$\theta_k\le\min\{1,2^{(1-k)/2}\}$. Although the recursion has
geometrically decaying autocovariances,
$V_{t-1}=|2V_t-1/2|$: an arbitrarily distant future score determines the
present score, so the scores are not even $\alpha$-mixing. This is the
type of distinction between mixing and functional dependence discussed
by \citet{wu2005nonlinear,han2023probability}.
Appendix~\ref{app:general-examples}
verifies these claims and the regularity needed for the results below.

Assumption~\ref{ass:general} also allows long-memory observations in
principle. For example, let $X_t=M_t$ and $Y_t=M_t+\eta_t$, where
$M_t$ is a stationary causal long-memory process and $(\eta_t)$ is iid,
independent of $M_t$, with a continuous absolute-value distribution.
The fixed predictor $\widehat\mu(x)=x$ removes the long-memory component
and gives iid nonconformity scores $V_t=|\eta_t|$, satisfying
Assumption~\ref{ass:general}. Section~\ref{sec:long-memory} studies
Gaussian linear models beyond this assumption, including regimes where
the nonconformity scores and their threshold indicators have long memory.

Let $q=F^{-1}(1-\alpha)=\inf\{v:F(v)\ge1-\alpha\}$ be the
population $(1-\alpha)$-quantile, and write $f$ for the marginal density
where it exists. The following theorem provides two non-asymptotic
bounds on marginal coverage error.

\begin{samepage}
\begin{theorem}[Non-asymptotic marginal coverage bounds]
\label{thm:marginal}
Under Assumption~\ref{ass:general}, the following bounds hold.

\textnormal{(i)} If
$2/n+(2\Theta^2/n)^{1/3}<\min\{\alpha,1-\alpha\}$,
then the split-conformal prediction set satisfies
\begin{equation}
 \left|\Pr\{Y_{n+1}\in\widehat C_n(X_{n+1})\}-(1-\alpha)\right|
 \le \frac2n+3\left(\frac{\Theta^2}{4n}\right)^{1/3}.
 \label{eq:marginal-bound}
\end{equation}
\textnormal{(ii)} Suppose additionally that, for some $e,c_f,L>0$,
$F$ has a density $f(v)\ge c_f$ on $[q-e,q+e]$, and the conditional
distribution of $V_t$ given the innovation past
$\mathcal F_{t-1}=\sigma(\varepsilon_j:j\le t-1)$ has a density
bounded by $L$ on this interval almost surely. If $2/n\le c_fe/2$, then
\begin{equation}
 \left|\Pr\{Y_{n+1}\in\widehat C_n(X_{n+1})\}-(1-\alpha)\right|
 \le \frac{2L}{c_fn}+\frac{2\sqrt2L\Theta}{c_f\sqrt n}
          +\frac{8\Theta^2}{nc_f^2e^2}.
 \label{eq:marginal-bound-smooth}
\end{equation}
Both bounds hold at every deterministic future index $t>n$.
\end{theorem}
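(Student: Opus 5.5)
Write $t=n+1$ (any $t>n$ works the same way, since only stationarity and the coupling structure are used). The coverage event is $\{V_t\le \qhat\}=\{\sum_{i=1}^n \ind\{V_i< V_t\}\ \text{small}\}$, or more usefully $\{V_t\le\qhat\}=\{\widehat F_n(V_t)\ge k_n/n\}$ up to ties. Ties occur with probability zero because $F$ is continuous. Here $\widehat F_n(v)=n^{-1}\sum_{i\le n}\ind\{V_i\le v\}$.

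The plan is to compare with an oracle in which $V_t$ is replaced by a copy $\tilde V_t$ independent of the calibration sample, or equivalently to compare $\widehat F_n$ with $F$ uniformly. For an independent copy, $\Pr\{F(\tilde V)\le F(\qhat)\}=\E F(\qhat)$. The main tool will be a variance bound for indicator sums. Using the martingale (projection) decomposition $\ind\{V_i\le v\}-F(v)=\sum_{j}\mathcal P_{i-j}\ind\{V_i\le v\}$ together with $\|\mathcal P_0\ind\{V_k\le v\}\|_2\le\theta_k$ (standard, from Wu 2005), we get $\|\sum_{i=1}^n(\ind\{V_i\le v\}-F(v))\|_2\le\sqrt n\,\Theta$ uniformly in $v$. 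By Chebyshev, $\Pr\{|\widehat F_n(v)-F(v)|>\epsilon\}\le\Theta^2/(n\epsilon^2)$ for each fixed $v$.

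For part (i), write $F(\qhat)$ and compare it with $k_n/n\approx 1-\alpha$ up to an error of $2/n$. We cannot take a supremum over $v$, so the dependence between $V_t$ and the calibration data has to be handled another way. We note $\Pr\{V_t\le\qhat\}=\Pr\{\widehat F_n(V_t)\ge k_n/n\}$ and use the lower and upper quantile points $v_\pm=F^{-1}(1-\alpha\pm\epsilon)$. On the event $\{|\widehat F_n(v_\pm)-F(v_\pm)|\le\epsilon'\}$, monotonicity of $\widehat F_n$ sandwiches $\qhat$ between $v_-$ and $v_+$. Hence $\{V_t\le v_-\}\subset\{V_t\le\qhat\}\subset\{V_t\le v_+\}$ outside an event of probability at most $2\Theta^2/(n\epsilon^2)$. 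Taking probabilities, with $F(V_t)$ uniform, gives an error of at most $\epsilon+2/n+2\Theta^2/(n\epsilon^2)$, or with constants refined to $\epsilon+\Theta^2/(2n\epsilon^2)$ via a one-sided Chebyshev split. We then optimize with $\epsilon=(\Theta^2/(4n))^{1/3}\cdot$const, which yields the $3(\Theta^2/4n)^{1/3}$ term. The hypothesis $2/n+(2\Theta^2/n)^{1/3}<\min\{\alpha,1-\alpha\}$ ensures $1-\alpha\pm\epsilon\pm 2/n\in(0,1)$, so that $v_\pm$ are finite and $k_n\le n$.

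For part (ii), rather than sandwiching, we decouple $V_t$ from $\qhat$ directly. Since the calibration data lie in $\mathcal F_{n}\subset\mathcal F_{t-1}$, conditioning gives $\Pr\{V_t\le\qhat\}=\E\,F_t(\qhat)$, where $F_t$ is the conditional CDF given $\mathcal F_{t-1}$. We then have $F_t(\qhat)-F(\qhat)$, and on the event $\qhat\in[q-e,q+e]$ we control this as $(F_t-F)(\qhat)-(F_t-F)(q)+(F_t-F)(q)$, with the Lipschitz bound $L$ on $F_t$ and $f\ge c_f$. More directly, we linearize: $|\qhat-q|\le|\widehat F_n(q)-(1-\alpha)|/c_f+O(1/(nc_f))$ on the good event. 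This gives $\E|F_t(\qhat)-F_t(q)|\le L\,\E|\qhat-q|\le L(2/n+\sqrt2\,\Theta/\sqrt n)/c_f$, using $\E F_t(q)=F(q)=1-\alpha$. The bad event $\{|\qhat-q|>e\}$ requires $|\widehat F_n(q\pm e)-F(q\pm e)|\ge c_fe/2$ (this is where $2/n\le c_fe/2$ is used). By Chebyshev it has probability at most $2\cdot 4\Theta^2/(nc_f^2e^2)=8\Theta^2/(nc_f^2e^2)$.

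The main obstacle is step (ii)'s linearization, which must produce exactly $\sqrt2\Theta/\sqrt n$. The plan is to bound $\E|\qhat-q|$ through $\E|\widehat F_n(\qhat)-F(\qhat)|$, which involves a random argument. This would be replaced by evaluating at the two fixed points $q\pm$ (the quantile-inversion trick $\{\qhat>q+u\}=\{\widehat F_n(q+u)<k_n/n\}$), integrating tail probabilities in $u$ over $[0,e]$, and applying the uniform-in-$v$ $L^2$ bound $\sqrt n\Theta$ at each fixed $u$. The $\sqrt2$ comes from combining the two tails.
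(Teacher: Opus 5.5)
Your strategy is the paper's. In (i) you sandwich $\qhat$ between fixed quantiles $v_\pm$, using the one-sided order-statistic implications and the fixed-threshold bound $\|\sum_{i\le n}\{\ind(V_i\le v)-F(v)\}\|_2\le\sqrt n\,\Theta$. In (ii) you condition on $\mathcal F_{t-1}$, use the locally $L$-Lipschitz conditional CDF, and integrate tails of $|\qhat-q|$ obtained by quantile inversion at $q\pm u$. However, the theorem has explicit constants, and as you track them neither part lands correctly on the stated bound.

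\emph{Part (i).} Put $A=\Theta^2/n$. Your union-bound version $\epsilon+2A/\epsilon^2$ optimizes to $3(A/2)^{1/3}$, which is weaker than claimed. The ``refinement'' $\epsilon+A/(2\epsilon^2)$ has no justification, since Chebyshev does not halve for a single tail, and it would give $3(A/8)^{1/3}$. The correct accounting simply avoids the union. Use $\Pr(V_t\le\qhat)\le F(v_+)+\Pr(\qhat>v_+)$ and $\Pr(V_t\le\qhat)\ge F(v_-)-\Pr(\qhat<v_-)$, with $F(v_\pm)=1-\alpha\pm u$. Then $\Pr(\qhat>v_+)\le A/(u-2/n)^2$ because $k_n/n<1-\alpha+2/n$, and $\Pr(\qhat<v_-)\le A/u^2$. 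Each one-sided deviation is therefore at most $u+A/(u-2/n)^2$. This is minimized at $u=2/n+(2A)^{1/3}$, with value exactly $2/n+3(A/4)^{1/3}$. The hypothesis of (i) is precisely admissibility of this $u$.

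\emph{Part (ii).} Your ``linearization'' $|\qhat-q|\le|\widehat F_n(q)-(1-\alpha)|/c_f+O(1/(nc_f))$ is not a valid inequality; it is a remainder-free Bahadur representation. So the tail integration you fall back on is the actual argument, not an alternative. It gives $2\sqrt2$, not $\sqrt2$. Let $d_n=k_n/n-(1-\alpha)\in[0,2/n)$. Then $\Pr(|\qhat-q|>u)\le\min\{1,2A/(c_fu-d_n)^2\}$ for $d_n/c_f<u\le e$, and $\int_0^\infty\min\{1,2A/x^2\}\,dx=2\sqrt{2A}$. Hence $\E(|\qhat-q|\wedge e)\le(d_n+2\sqrt{2A})/c_f$, and multiplying by $L$ gives exactly the first two terms of \eqref{eq:marginal-bound-smooth}. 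Your target $\sqrt2\,\Theta/\sqrt n$ is a factor two too small and is not reachable by this route.

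Also work with the truncated quantity $L\,\E(|\qhat-q|\wedge e)+\Pr(|\qhat-q|>e)$ rather than $L\,\E|\qhat-q|$, since no moment of $V_t$ is assumed.

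Two smaller points. Your opening identity is reversed: $\widehat F_n(V_t)\ge k_n/n$ means $\qhat\le V_t$. And continuity of $F$ does not rule out inter-time ties under dependence; the paper assumes this separately in Assumption~\ref{ass:quantile}. Neither matters here, because the sandwich uses only the order-statistic implications, which hold with ties.
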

\nopagebreak[4]
\noindent\emph{Proof: Appendix~\ref{app:general-marginal}.}
\end{samepage}

These bounds account for calibration error while allowing temporal
dependence between the calibration set and immediately adjacent test points.
Part~\textnormal{(i)} is density-free; part~\textnormal{(ii)} gives
an $O(n^{-1/2})$ bound under an additional local conditional-density
condition. That condition is used only in part~\textnormal{(ii)}.

To study the asymptotic distribution of realized coverage, we add local
regularity of the marginal distribution at the target quantile. Write
$F_n(v)=n^{-1}\sum_{t=1}^n\ind(V_t\le v)$ for the empirical CDF of
the calibration nonconformity scores.

\begin{assumption}[Quantile regularity]
\label{ass:quantile}
There are $e>0$ and $0<c_f\le C_f<\infty$ such that $F$ has a density
with $c_f\le f(v)\le C_f$ for $|v-q|\le e$. The density is continuous
at $q$. For distinct integer times $s,t$, $\Pr(V_s=V_t)=0$.
\end{assumption}

The Bahadur representation replaces the random calibration cutoff by an
indicator average at the population quantile. This is the step needed
to transfer a fixed-threshold limit to conformal coverage.

\begin{samepage}
\begin{theorem}[Bahadur representation]
\label{thm:bahadur}
Under Assumptions~\ref{ass:general}--\ref{ass:quantile},
\begin{align}
 F(\qhat)-(1-\alpha)
 &=-\frac1n\sum_{t=1}^n
       \{\ind(V_t\le q)-(1-\alpha)\}+o_{\Pr}(n^{-1/2}),
       \label{eq:general-probability-bahadur}\\
 \qhat-q
 &=-\frac1{nf(q)}\sum_{t=1}^n
       \{\ind(V_t\le q)-(1-\alpha)\}+o_{\Pr}(n^{-1/2}).
       \label{eq:general-bahadur}
\end{align}
\end{theorem}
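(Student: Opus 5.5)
The plan follows the classical route to a Bahadur representation: show that the empirical process is asymptotically equicontinuous near $q$, then invert. Define the centred local empirical process
\[
 R_n(v)=\{F_n(v)-F(v)\}-\{F_n(q)-F(q)\}.
\]
The key claim is
\[
 \sup_{|v-q|\le\delta_n}|R_n(v)|=o_{\Pr}(n^{-1/2})
\]
for every sequence $\delta_n\to0$.

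\textbf{Step 1: variance bound for increments.} For fixed $v$, write $\ind(V_t\le v)-\ind(V_t\le q)$ as a sum of martingale-difference projections $\mathcal P_{t-j}$, in the manner of \citet{wu2005nonlinear}. The usual projection/coupling inequality bounds $\|\mathcal P_{t-j}\,\cdot\|_2$ by the FDM of the increment indicator at lag $j$. That FDM is at most $\min\{2\theta_j,\ 2|F(v)-F(q)|^{1/2}\}$, since a difference of indicators is itself an indicator of the interval between $q$ and $v$ (or its negative). Summing over lags gives
\[
 \|nR_n(v)\|_2\le \sqrt n\sum_{j\ge0}\min\{2\theta_j,\,2\sqrt{C_f|v-q|}\}.
\]
Because $\Theta<\infty$, dominated convergence makes the sum $o(1)$ as $v\to q$. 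This is where summability of $\theta_k$ enters.

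\textbf{Step 2: uniformity over $v$ (the main obstacle).} To pass from pointwise to uniform control, use a grid $q=v_0<v_1<\dots<v_K=q+\delta_n$ with $K=K_n$ points. Pick the spacing so that $F(v_{i+1})-F(v_i)\le \epsilon n^{-1/2}$, which needs $K\asymp \delta_n\sqrt n/\epsilon$. Monotonicity of $F_n$ and $F$ controls the gaps between grid points by $\epsilon n^{-1/2}$ plus the grid values. The difficulty is that a union bound over $K$ points with only second moments fails when $K$ grows. I would instead apply a chaining/dyadic maximal inequality, as in Wu's empirical-process results for dependent sequences. Concretely, bound $\E\max_i|R_n(v_i)|^2$ by a sum over dyadic levels of the Step 1 bounds, using that interval increments at level $\ell$ have variance of order $n\cdot 2^{-\ell}\delta_n$ times a factor tending to zero. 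It suffices to take $\delta_n=n^{-1/2}\log n$, since Step 3 shows $\qhat$ lies in such a window. If chaining with pure $L^2$ becomes delicate, I would fall back on a stronger moment: bound the fourth moment of the projection sums with Burkholder's inequality, which uses indicator boundedness to give $L^4$ control of order $\theta_j$ as well.

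\textbf{Step 3: consistency and root-$n$ rate of $\qhat$.} Use Step 1 at the fixed points $q\pm Cn^{-1/2}$. By a CLT-type variance bound, $F_n(q)-F(q)=O_{\Pr}(n^{-1/2})$ via the projection bound with $\sum\theta_j=\Theta$. Together with $f\ge c_f$ on $[q-e,q+e]$ and the identity $F_n(\qhat)=k_n/n$, this gives $|\qhat-q|=O_{\Pr}(n^{-1/2})$. The no-ties condition in Assumption~\ref{ass:quantile} guarantees $F_n(\qhat)=k_n/n$ exactly, and $k_n/n=1-\alpha+O(1/n)$.

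\textbf{Step 4: inversion.} Evaluate the equicontinuity bound at $v=\qhat$:
\[
 F(\qhat)-(1-\alpha)=F_n(\qhat)-(1-\alpha)-\{F_n(q)-F(q)\}+o_{\Pr}(n^{-1/2}).
\]
The term $F_n(\qhat)-(1-\alpha)$ is $O(1/n)$, and $F(q)=1-\alpha$ by continuity. This yields \eqref{eq:general-probability-bahadur}. Finally, by the mean value theorem and continuity of $f$ at $q$,
\[
 F(\qhat)-F(q)=f(q)(\qhat-q)+o(|\qhat-q|),
\]
and $|\qhat-q|=O_{\Pr}(n^{-1/2})$. Dividing by $f(q)\ge c_f$ gives \eqref{eq:general-bahadur}.
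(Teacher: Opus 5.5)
Your Steps 1, 3 and 4 match the paper's argument. The increment bound is exactly $\|R_n(v)\|_2\le\psi(\delta)/\sqrt n$ with $\psi(\delta)=2\sum_{k\ge0}\min\{\sqrt{C_f\delta},\theta_k\}$, and the root-$n$ tightness and exact-rank inversion are the same as the paper's. The gap is in Step 2. The only increment control Assumption~\ref{ass:general} gives you is the Step~1 bound. So an interval of $v$-width $w$ does \emph{not} have variance of order $nw$ times a vanishing factor, as your chaining step asserts; you only get $n\psi(w)^2$. Moreover $\psi(w)\ge2\sqrt{C_fw}\,N(w)$, where $N(w)$ is the number of lags with $\theta_k\ge\sqrt{C_fw}$, so $\psi(w)/\sqrt w$ is unbounded as $w\downarrow0$ whenever infinitely many $\theta_k$ are nonzero. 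With your window $\delta_n=n^{-1/2}\log n$ and $2^L\asymp\log n/\epsilon$ grid points, the finest chaining level alone contributes about $\sqrt{n2^L}\,\psi(\epsilon n^{-1/2})$. This is $o(\sqrt n)$ only if $\psi(n^{-1/2})=o\{(\log n)^{-1/2}\}$. Summability of $\theta_k$ gives no such rate: $\theta_k\asymp1/\{k\log k\,(\log\log k)^2\}$ is summable yet gives $\psi(\delta)\gtrsim1/\log\log(1/\delta)$.

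The $L^4$ fallback is also wrong as stated. An indicator difference takes values in $\{-1,0,1\}$, so its $L^4$ norm is the square root of its $L^2$ norm. The projection bounds are therefore of order $\theta_j^{1/2}$, not $\theta_j$. Summing them requires $\sum_j\theta_j^{1/2}<\infty$, the stronger condition the paper imposes only for Proposition~\ref{prop:variance}. Your opening claim for \emph{every} $\delta_n\to0$ is a full local equicontinuity statement that these bounds do not deliver either, and it is not needed.

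The obstacle comes from the $\log n$ window, and your own Step~3 removes it. Localize to $\{|\qhat-q|\le Cn^{-1/2}\}$ with $C$ fixed; its complement has probability at most of order $\Theta^2/(c_f^2C^2)$ for large $n$. Your grid with $F$-spacing $\epsilon n^{-1/2}$ then has $K\asymp C_fC/\epsilon$ points, which is bounded in $n$. The crude union bound $\E\max_i nR_n(v_i)^2\le(K+1)\psi(Cn^{-1/2})^2\to0$ therefore suffices, and monotonicity costs at most $\epsilon n^{-1/2}$ between grid points. Letting $n\to\infty$, then $\epsilon\downarrow0$, then $C\to\infty$ gives $R_n(\qhat)=o_{\Pr}(n^{-1/2})$.

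This is precisely the paper's route: its oscillation bound \eqref{eq:general-local-oscillation} takes $\delta=C/\sqrt n$ and $J\to\infty$ slowly enough that $J\psi(\delta)^2\to0$. It uses only $\psi(\delta)\to0$, which is exactly where $\Theta<\infty$ enters. It needs no rate for $\psi$, no chaining, and no moments beyond the second.
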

\nopagebreak[4]
\noindent\emph{Proof: Appendix~\ref{app:general-bahadur}.}
\end{samepage}

Bahadur representations under dependence are classical
\citep{wu2005bahadur}; here the proof uses the exact conformal rank
and a local empirical-process bound under the stated indicator FDM.
The following theorem is the main result of this section. The Bahadur
representation allows us to establish asymptotic normality of realized
coverage.

\begin{theorem}[Asymptotic normality of realized coverage]
\label{thm:realized}
Under Assumptions~\ref{ass:general}--\ref{ass:quantile}, with
$n,m\to\infty$, the series
$\sigma_{\rm cov}^2=\sum_{h\in\mathbb Z}
\Cov\{\ind(V_0\le q),\ind(V_h\le q)\}$
is absolutely convergent, and
\begin{equation}
 \sqrt{\frac{nm}{n+m}}\{\coverage-(1-\alpha)\}
 \xrightarrow{d}N(0,\sigma_{\rm cov}^2).
 \label{eq:general-coverage-clt}
\end{equation}
The zero-variance Gaussian is allowed in this statement.
\end{theorem}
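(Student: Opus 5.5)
We split realized coverage into a test-window fluctuation at the fixed quantile, plus a calibration-cutoff term that Theorem~\ref{thm:bahadur} handles. Write $I_t=\ind(V_t\le q)-(1-\alpha)$, and note $F(q)=1-\alpha$ by continuity. Then
\begin{align*}
 \coverage-(1-\alpha)
 &=\frac1m\sum_{t=n+1}^{n+m}I_t
 +\{F(\qhat)-(1-\alpha)\}
 +R_{n,m},\\
 R_{n,m}&=\frac1m\sum_{t=n+1}^{n+m}
 \bigl[\{\ind(V_t\le\qhat)-F(\qhat)\}-\{\ind(V_t\le q)-F(q)\}\bigr].
\end{align*}
By \eqref{eq:general-probability-bahadur}, the middle term equals $-n^{-1}\sum_{t=1}^nI_t+o_{\Pr}(n^{-1/2})$. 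Since $\sqrt{nm/(n+m)}\le\sqrt n$, this remainder is negligible after normalization. The leading part is therefore
\[
 \sqrt{\tfrac{nm}{n+m}}\Bigl(\tfrac1m\textstyle\sum_{t=n+1}^{n+m}I_t-\tfrac1n\sum_{t=1}^nI_t\Bigr),
\]
a weighted sum of the single stationary indicator process $(I_t)$ with weights $-a_n/n$ on $[1,n]$ and $a_n/m$ on $[n+1,n+m]$, where $a_n=\sqrt{nm/(n+m)}$.

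\textbf{Step 1 (variance and CLT for the leading part).} Absolute convergence of $\sigma_{\rm cov}^2$ follows from the standard FDM covariance bound via the projections $P_j I_t=\E(I_t\mid\mathcal F_j)-\E(I_t\mid\mathcal F_{j-1})$. These satisfy $\|P_0I_k\|_2\le\theta_k$, so
\[
 \sum_h|\Cov(I_0,I_h)|\le\Bigl(\sum_k\theta_k\Bigr)^2=\Theta^2.
\]
For the CLT, I would use the martingale approximation $I_t=\sum_{j\le t}P_jI_t$, with $D_j=\sum_{k\ge0}P_jI_{j+k}$ a stationary martingale difference of variance $\sigma_{\rm cov}^2$. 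The weighted sum equals $\sum_j c_jD_j$ plus an error. Here $c_j$ is the weight assigned to the $D_j$ contributions, and the error is controlled in $L^2$ by the tail sums $\sum_{k>K}\theta_k$ together with boundary effects at the three window edges, each of size $O(a_n/n+a_n/m)$ times a bounded constant. The variance of the main term is $a_n^2\sigma^2(1/n+1/m)=\sigma^2$. Because $\max_j|c_j|\to0$, the martingale CLT (Lindeberg via uniform integrability of $D_j^2$) gives $N(0,\sigma_{\rm cov}^2)$. If $\sigma_{\rm cov}^2=0$, the $L^2$ norm tends to zero and the degenerate limit holds. Uniformity over how $n,m\to\infty$ jointly follows because every bound depends only on $1/n+1/m$ scaled by $a_n^2$.

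\textbf{Step 2 (remainder $R_{n,m}$, the main obstacle).} We need $a_nR_{n,m}=o_{\Pr}(1)$, and $a_n$ can be as large as $\sqrt m$ when $m\ll n$. By \eqref{eq:general-bahadur}, $|\qhat-q|=O_{\Pr}(n^{-1/2})$. It therefore suffices to show
\[
 \sup_{|v-q|\le Kn^{-1/2}}\Bigl|\frac1{\sqrt m}\sum_{t=n+1}^{n+m}\{\ind(V_t\le v)-\ind(V_t\le q)-F(v)+F(q)\}\Bigr|\to0
\]
in probability. This is an asymptotic-equicontinuity statement for the test-window empirical process over a shrinking band. Note that $\qhat$ depends on the calibration data, which is dependent on the test window; taking the supremum avoids needing any independence. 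For a fixed $v$, the same projection bound gives variance at most $\Theta$ times $\|\ind(V_0\le v)-\ind(V_0\le q)\|_2\lesssim\sqrt{C_fKn^{-1/2}}$, which tends to zero. To handle the supremum I would chain over a grid of mesh $\epsilon/\sqrt m$, using monotonicity of indicators in $v$ and the density bound $C_f$ to control values between grid points. I would also reuse the local empirical-process bound from the proof of Theorem~\ref{thm:bahadur}, applied to the window $n+1,\dots,n+m$ with $m$ in place of $n$. The delicate case is $m\gg n$: then the band width $n^{-1/2}$ is wide relative to $m^{-1/2}$, but $a_n\approx\sqrt n$, so we need only an $o(n^{-1/2})$ bound, and a chaining bound of order $m^{-1/2}\sqrt{n^{-1/2}}\log m$ suffices. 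I expect to split into the regimes $m\le n$ and $m>n$, using the normalization $a_n\le\min(\sqrt n,\sqrt m)$ in each.

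Slutsky's lemma combining Steps 1--2 then yields \eqref{eq:general-coverage-clt}.
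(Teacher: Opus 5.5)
Your proposal is correct and follows essentially the paper's route. It uses the same decomposition, and your use of \eqref{eq:general-probability-bahadur} for the calibration term is the paper's local bound on $D_{0,n}(\qhat)$ in disguise. The martingale approximation with two-block weights $-a_n/n$ and $a_n/m$, and the local-oscillation control of the test-window term $D_{n,m}(\qhat)$, are also the paper's.

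Two steps need tightening.

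First, the martingale CLT also requires $\sum_j c_j^2\E(D_j^2\mid\mathcal F_{j-1})\to\sigma_{\rm cov}^2$ in probability. For general weight arrays this does not follow from $\max_j|c_j|\to0$, unit variance and uniform integrability. It does hold here, because the weights are constant on two contiguous windows. The ergodic theorem then applies to each block average, with stationarity handling the moving test window, as in the paper.

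Second, in Step~2 your variance rate $\Theta\sqrt{C_fKn^{-1/2}}$ and the $\log m$ chaining bound are not justified by $\Theta<\infty$ alone. The projection argument gives only $\psi(\delta)^2\to0$, with no rate. They are also unnecessary. Apply \eqref{eq:general-local-oscillation} directly with $N=m$, $A_N=a_n$ and $\delta=K/\sqrt n$. This works in all regimes at once, since $a_n^2/m\le1$ and $a_nK/\sqrt n\le K$. A slowly growing grid size $J$ then gives $a_nD_{n,m}(\qhat)=o_{\Pr}(1)$ without splitting into $m\le n$ and $m>n$.
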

\nopagebreak[4]
\noindent\emph{Proof: Appendix~\ref{app:general-clt}.}

Calibration uncertainty and test variation both contribute:
the standard-error scale is $\sigma_{\rm cov}\sqrt{1/n+1/m}$.
Positive accumulated lagged indicator covariance increases this
standard error relative to iid scores. For iid nonconformity
scores, the classical first-order calibration and test variance contributions
are $\alpha(1-\alpha)/n$ and $\alpha(1-\alpha)/m$, respectively,
giving the standard error $\sqrt{\alpha(1-\alpha)(1/n+1/m)}$.
The limiting variance $\sigma_{\rm cov}^2$ is generally unknown, so
feasible inference requires estimating it. We use nonoverlapping blocks
of calibration indicators at the estimated cutoff. With a deterministic
block length $\ell=\ell_n$ and number
of blocks $B=\lfloor n/\ell\rfloor$, define
\begin{equation}
 \overline I_j=\frac1\ell\sum_{t=(j-1)\ell+1}^{j\ell}\ind(V_t\le\qhat),
 \qquad
 \overline I=\frac1B\sum_{j=1}^B\overline I_j,
 \qquad
 \widehat\sigma_{\rm cov}^{\,2}=\frac\ell B\sum_{j=1}^B(\overline I_j-\overline I)^2.
 \label{eq:general-batch-variance}
\end{equation}
The estimator uses the $B$ complete blocks and divisor $B$.
The following proposition shows that this variance estimator is consistent.

\begin{proposition}[Consistent block-based variance estimator]
\label{prop:variance}
Under Assumptions~\ref{ass:general}--\ref{ass:quantile}, suppose that
$\sum_{k\ge1}(\sum_{j\ge k}\theta_j)^{1/2}<\infty$,
$\ell\to\infty$, and $\ell/n\to0$.
Then $\widehat\sigma_{\rm cov}^{\,2}\xrightarrow{\Pr}
\sigma_{\rm cov}^2>0$.
\end{proposition}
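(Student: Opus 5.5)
The plan is to split $\widehat\sigma_{\rm cov}^{\,2}$ into an oracle batch-means estimator at the fixed threshold $q$ plus a perturbation caused by replacing $q$ with $\qhat$. Then show that the oracle part converges to $\sigma_{\rm cov}^2$ and that the perturbation is $o_{\Pr}(1)$. Write $I_t(v)=\ind(V_t\le v)$, $\overline I_j(v)$ for the block mean at threshold $v$, and $\widetilde\sigma^2(v)=(\ell/B)\sum_j\{\overline I_j(v)-\overline I(v)\}^2$, so that $\widehat\sigma_{\rm cov}^{\,2}=\widetilde\sigma^2(\qhat)$.

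\emph{Oracle step.} For fixed $v=q$, $I_t(q)-(1-\alpha)$ is a stationary causal functional of the innovations with $L^2$ FDM at most $\theta_k$ (by definition of $\theta_k$). The stronger condition $\sum_{k\ge1}(\sum_{j\ge k}\theta_j)^{1/2}<\infty$ is the kind used for $L^2$-consistency of batch-means long-run variance estimators under functional dependence. I would use the projection operators $P_j=\E(\cdot\mid\mathcal F_j)-\E(\cdot\mid\mathcal F_{j-1})$ and the martingale approximation $D_0=\sum_{k\ge0}P_0 I_k(q)$, which satisfies $\|P_0I_k(q)\|_2\le\theta_k$. This gives $\|S_\ell-\ell(1-\alpha)-M_\ell\|_2=o(\sqrt\ell)$, where $M_\ell$ is the martingale part; the tail-sum condition supplies the rate needed to make the remainder negligible uniformly across blocks. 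Two consequences follow.
\begin{itemize}
\item The block means $\sqrt\ell\{\overline I_j(q)-(1-\alpha)\}$ have second moments converging to $\sigma_{\rm cov}^2=\|D_0\|_2^2$; the absolute convergence of $\sigma_{\rm cov}^2$ is given by Theorem~\ref{thm:realized}.
\item Their squares form an approximately stationary, weakly dependent array, so their average over the $B\to\infty$ blocks obeys a weak law of large numbers. I would prove this by $m$-dependent truncation: replace each block by its conditional expectation given the innovations in and shortly before the block, and control the error by $\sum_{j\ge k}\theta_j$.
\end{itemize}
The centering by $\overline I(q)$ contributes $\ell\{\overline I(q)-(1-\alpha)\}^2=O_{\Pr}(\ell/n)=o_{\Pr}(1)$. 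Together these give $\widetilde\sigma^2(q)\to\sigma_{\rm cov}^2$ in probability.

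\emph{Perturbation step.} By Theorem~\ref{thm:bahadur}, $|\qhat-q|=O_{\Pr}(n^{-1/2})$, so it suffices to show
\[
\sup_{|v-q|\le Kn^{-1/2}}\frac\ell B\sum_{j}\{\overline I_j(v)-\overline I_j(q)\}^2\xrightarrow{\Pr}0
\]
for each fixed $K$. Expanding the square and using Cauchy--Schwarz then handles the cross terms. For the two endpoint thresholds $v_\pm=q\pm Kn^{-1/2}$, monotonicity gives $|\overline I_j(v)-\overline I_j(q)|\le\overline I_j(v_+)-\overline I_j(v_-)$. Let $d_t=I_t(v_+)-I_t(v_-)$, which has mean $p_n\le 2C_fKn^{-1/2}$. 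Its FDM is bounded by $2\theta_k$, and its variance is at most $p_n$. I would bound
\[
\ell\,\E\{\overline I_j(v_+)-\overline I_j(v_-)\}^2\le \ell p_n^2+\E\Bigl\{\bigl(\ell^{-1/2}\textstyle\sum_{t\in\text{block}}(d_t-p_n)\bigr)^2\Bigr\}.
\]
The second term is the long-run variance of a small-probability indicator. Since $\|P_0d_k\|_2\le\min(2\theta_k,\sqrt{p_n})$, dominated convergence shows it tends to $0$. The first term is $O(\ell/n)\to0$. Averaging over blocks and applying Markov's inequality completes this step, and no uniform empirical-process bound beyond the monotone sandwich is needed.

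\emph{Positivity of $\sigma_{\rm cov}^2$; main obstacle.} I expect this to be the hard part, since Assumption~\ref{ass:general} gives no conditional density. My plan is to argue by contradiction. If $\|D_0\|_2=0$, then $I_t(q)-(1-\alpha)=W_{t}-W_{t+1}$ is an $L^2$ coboundary, so $S_n-n(1-\alpha)$ is tight. I would then try to contradict this using the continuity of $F$ near $q$ (the density bound $f\ge c_f$ from Assumption~\ref{ass:quantile}) and $\Pr(V_s=V_t)=0$. Concretely, I would compare thresholds $q$ and $q+\epsilon$ to show that $\Var\{\sum_{t\le n}(I_t(q+\epsilon)-I_t(q))\}$ cannot be bounded for all small $\epsilon$ while the marginal mass $\approx c_f\epsilon$ spreads over $n$ distinct, non-tied score values. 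If this direct route fails, the fallback is to read positivity off the proof of Theorem~\ref{thm:realized}, which identifies $\sigma_{\rm cov}^2=\|D_0\|_2^2$, and to exploit the integer-valuedness of $S_n$ against the non-degenerate spread induced by the density.
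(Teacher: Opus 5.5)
\textbf{Consistency.} This half of your plan is sound and differs from the paper only in technique.

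For the oracle estimator, the paper bounds $\|S_j\|_4\le C\sqrt\ell\sum_k\theta_k^{1/2}$ by Wu's projection moment inequality, using $\|P_0\xi_k\|_4\le\theta_k^{1/2}$ for indicator differences. It then bounds $\Cov(W_1,W_{1+j})$, with $W_j=S_j^2/\ell$, by coupling away the innovations of the earlier block. This is where $\sum_k\Lambda(k)^{1/2}<\infty$, with $\Lambda(k)=\sum_{j\ge k}\theta_j$, is used. Your truncation to $\widetilde S_j=\E(S_j\mid\varepsilon_s:(j-1)\ell<s\le j\ell)$ also works. Successive replacement gives a per-block $L^2$ error of at most $\sum_{u\le\ell}\Lambda(u)=O(1)=o(\sqrt\ell)$, and the truncated blocks are independent. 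The triangular-array weak law of large numbers then needs uniform integrability of $\widetilde S_1^2/\ell$, which you should state; it follows from the block CLT together with convergence of second moments.

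Your two-endpoint monotone sandwich with $\|P_0d_k\|_2\le\min(2\theta_k,\sqrt{p_n})$ is a leaner substitute for the paper's grid bound \eqref{eq:general-local-oscillation}.

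\textbf{The gap: positivity.} The genuine gap is the assertion $\sigma_{\rm cov}^2>0$, which is part of the proposition. Theorem~\ref{thm:realized} explicitly allows a zero variance, so your fallback cannot supply it.

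Your density-based contradiction cannot succeed. Continuity of $F$, a density bounded below near $q$, absence of ties, and integer-valued counts are all compatible with bounded centered partial sums. For irrational $\alpha$, let $V_t=x_0+t\alpha \bmod 1$ with $x_0$ uniform. The scores are uniform and tie-free, and $q=1-\alpha$. With $g(x)=x$ on $[0,1)$ one checks $\ind(V_t<1-\alpha)-(1-\alpha)=g(V_{t+1})-g(V_t)$. Hence, writing $\xi_t=\ind(V_t\le q)-(1-\alpha)$, the sum $\sum_{t\le n}\xi_t$ stays in $(-1,1)$ for every $n$. Unbounded variance at $q+\epsilon$ says nothing about $q$.

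What is missing is the iid-innovation structure, used through tail triviality. The paper argues as follows.
\begin{enumerate}
\item The tail condition gives $\sum_k\Lambda(k)<\infty$. Hence $H_t=\sum_{k\ge1}\E(\xi_{t+k}\mid\mathcal F_t)$ converges in $L^2$, and $D_t=\sum_{k\ge0}P_t\xi_{t+k}=\xi_t+H_t-H_{t-1}$. This is what justifies your coboundary step; it does not follow from $\|D_0\|_2=0$ under $\Theta<\infty$ alone.
\item If $\sigma_{\rm cov}^2=0$, then $\xi_t=H_{t-1}-H_t$. Because $\xi_t+(1-\alpha)\in\{0,1\}$, the variable $Z_t=\exp(2\pi\mathrm{i}H_t)$ satisfies $Z_t=e^{2\pi\mathrm{i}(1-\alpha)}Z_{t-1}$.
\item So $Z_0$ is a deterministic multiple of $Z_{-r}$, hence $\mathcal F_{-r}$-measurable for every $r$. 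By Kolmogorov's zero--one law it is almost surely constant.
\item Stationarity makes $Z_{-1}$ the same constant, which contradicts $e^{2\pi\mathrm{i}(1-\alpha)}\ne1$.
\end{enumerate}
The rotation escapes precisely because it carries this eigenvalue, which no causal function of iid innovations can have.
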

\nopagebreak[4]
\noindent\emph{Proof: Appendix~\ref{app:general-variance}.}

The additional tail condition holds for geometrically decaying
$\theta_k$ and for $\theta_k=O(k^{-a})$ with $a>3$.
This condition is for variance estimation only.
It also guarantees a positive limiting variance.
Such block-based variance estimation is classical \citep{flegal2010batch}.

Combining Proposition~\ref{prop:variance} with
Theorem~\ref{thm:realized} gives the feasible standard error
$\widehat{\mathrm{SE}}=\widehat\sigma_{\rm cov}\sqrt{1/n+1/m}$.
After the test set is observed, the statistic
$\{\coverage-(1-\alpha)\}/\widehat{\mathrm{SE}}$ has an asymptotic
standard normal reference under the stated nonconformity-score and
calibration model. For $0<\eta<1$, a two-sided level-$\eta$ diagnostic rejects when
its absolute value exceeds $z_{1-\eta/2}$; a lower-tail diagnostic
flags unusually low coverage, where $z_u$ is the $u$-quantile of
$N(0,1)$. Before observing the test set,
$[(1-\alpha)\pm z_{1-\eta/2}\widehat{\mathrm{SE}}]\cap[0,1]$
is an asymptotic prediction interval for its random realized coverage.

For a small number $B\ge2$ of blocks, a block-based Student-$t$
approximation \citep{jones2006fixed} replaces $z_u$ by
$\sqrt{B/(B-1)}\,t_{B-1,u}$, where $t_{\nu,u}$ is the
$u$-quantile of Student's $t$ distribution with $\nu$ degrees of
freedom. Appendix~\ref{app:general-variance} derives this reference
under fixed-$B$ asymptotics; the scale factor accounts for divisor $B$.

%% file: long_memory.tex
\section{A study of long-memory linear processes}
\label{sec:long-memory}

The preceding theory assumes summability of the FDM of the
score-indicator process. Although it allows long-memory observations,
the class it covers remains restricted. We now study realized coverage
beyond this assumption, using Gaussian linear processes to describe
temporal dependence. Their coefficients allow arbitrarily strong
memory within the stationary range.

\begin{samepage}
\begin{assumption}[Gaussian linear residuals]
\label{ass:lm}
The fixed predictor has residuals
$W_t=Y_t-\widehat\mu(X_t)=\sum_{j=0}^{\infty}a_j\varepsilon_{t-j}$,
where $\varepsilon_t\stackrel{\mathrm{iid}}{\sim}N(0,1)$,
$a_0\ne0$, $a_j\sim c j^{-\beta}$, $c>0$, and $1/2<\beta<1$.
The nonconformity scores are $V_t=|W_t|$.
\end{assumption}
\end{samepage}

Write $\sigma_W^2=\E W_0^2$ and
$C_\gamma=c^2\int_0^\infty x^{-\beta}(1+x)^{-\beta}\,dx$.
Then $\Cov(W_0,W_h)\sim C_\gamma h^{1-2\beta}$, so the residual
process has long memory throughout $1/2<\beta<1$. Taking the absolute
value changes the covariance decay: both $V_t$ and its coverage
indicator have covariance of order $h^{2-4\beta}$. They have long
memory when $\beta\le3/4$, and summable covariances when $\beta>3/4$.
As memory becomes stronger, the limiting law of realized coverage
changes from Gaussian to non-Gaussian. A related phase transition
appears in covariance inference for long-memory time series
\citep{zhai2026simultaneous}. Define the corresponding rate by
\begin{equation}
r_n=
\begin{cases}
n^{-1/2},&3/4<\beta<1,\\
\sqrt{\log n/n},&\beta=3/4,\\
n^{1-2\beta},&1/2<\beta<3/4.
\end{cases}
\label{eq:lm-rate}
\end{equation}

\begin{theorem}[Bahadur representation under long memory]
\label{thm:lm-bahadur}
Under Assumption~\ref{ass:lm}, the conformal cutoff satisfies
\begin{equation}
\qhat-q=-\frac1{nf(q)}\sum_{t=1}^{n}
\big\{\ind(V_t\le q)-(1-\alpha)\big\}+o_{\Pr}(r_n).
\label{eq:lm-bahadur}
\end{equation}
\end{theorem}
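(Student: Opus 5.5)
Following the structure of Theorem~\ref{thm:bahadur}, the plan is to prove a uniform local oscillation bound for the empirical process of the score indicators and then invert the order statistic. Write $G_n(v)=F_n(v)-F(v)$. Since $k_n/n=1-\alpha+O(1/n)$ and ties have probability zero (the $W_t$ are jointly Gaussian with nondegenerate bivariate laws because $a_0\ne0$), we have $F_n(\qhat)=k_n/n$ exactly. Hence
\[
F(\qhat)-(1-\alpha)=-G_n(\qhat)+O(n^{-1}),
\]
and $O(n^{-1})=o(r_n)$ in every regime. The proof then has three steps: (a) a rate $\qhat-q=O_{\Pr}(r_n)$; (b) the oscillation bound $\sup_{|v-q|\le Kr_n}|G_n(v)-G_n(q)|=o_{\Pr}(r_n)$ for each fixed $K$; (c) a Taylor expansion. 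For (c), $F$ is the folded normal CDF of $W_0\sim N(0,\sigma_W^2)$, which is smooth with $f(q)>0$. So $F(\qhat)-F(q)=f(q)(\qhat-q)+O_{\Pr}(r_n^2)$, and dividing by $f(q)$ gives \eqref{eq:lm-bahadur}.

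For (a) and (b) I would use the Hermite expansion of indicators of Gaussian variables, as in \citet{dehling1989empirical} and \citet{wu2003empirical}. Normalize $W_t/\sigma_W$. Because $v\mapsto\ind(|x|\le v)$ is even in $x$, its Hermite rank is $2$, and
\[
\ind(V_t\le v)-F(v)=c_2(v)H_2(W_t/\sigma_W)+R_t(v),
\]
where $c_2(v)=\E[\ind(V_0\le v)H_2(W_0/\sigma_W)]/2$ is smooth in $v$ and $R_t(v)$ has Hermite rank at least $4$. Summing over $t$ splits $G_n(v)=c_2(v)S_n/n+n^{-1}\sum_t R_t(v)$ with $S_n=\sum_{t\le n}H_2(W_t/\sigma_W)$. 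The covariance of $H_2$ terms is of order $h^{2-4\beta}$, so $\Var(S_n)$ is of order $n^2r_n^2$ in all three regimes; for the rank-$4$ remainder the covariances are $O(h^{4-8\beta})$. The leading term is easy to handle: $|c_2(v)-c_2(q)|\,|S_n|/n\le C|v-q|\,O_{\Pr}(r_n)=o_{\Pr}(r_n)$ uniformly on $|v-q|\le Kr_n$, since $r_n\to0$.

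The remainder needs more care. When $\beta>3/4$ the whole indicator process has summable covariances, and I would follow the short-memory chaining argument from the proof of Theorem~\ref{thm:bahadur}. Alternatively, I would bound the second moment of increments of the remainder, $\E|n^{-1}\sum_t\{R_t(v)-R_t(v')\}|^2$, by $C n^{-1}|v-v'|$ plus a long-range term. Either way I would chain over a grid of mesh $n^{-1}$, as in the dyadic argument of \citet{wu2003empirical}, and control the within-cell fluctuation by monotonicity of $F_n$ together with the Lipschitz property of $F$. This gives an oscillation of order $n^{-1/2}(r_n\log n)^{1/2}+n^{-1}\log n$ plus a long-range contribution. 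The first term is $o(r_n)$ because $r_n\ge n^{-1/2}$ up to logarithms and $r_n\sqrt n\to\infty$ or $r_n=n^{-1/2}$. In the latter case $(r_n\log n)^{1/2}n^{-1/2}=n^{-3/4}\sqrt{\log n}=o(n^{-1/2})$.

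Step (a) then follows from the representation itself. Since $\sup_v|G_n(v)|=O_{\Pr}(r_n)$ (apply the same decomposition with a coarse grid) and $F$ has positive density near $q$, we get $|\qhat-q|\le C\sup_v|G_n(v)|$ on an event of probability tending to one.

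I expect the main obstacle to be the uniform control of the rank-$\ge4$ remainder when $\beta$ is near $1/2$, where rank-$4$ covariances $h^{4-8\beta}$ are themselves nonsummable. The second-moment bound there is $(n^{2-4\beta})^2\cdot$ constant, which is $O(r_n^2)$ but not $o(r_n^2)$. The fix I would try first is to use that the increment $R_t(v)-R_t(v')$ has Hermite coefficients of size $O(|v-v'|)$. Its long-range variance is then $O(|v-v'|^2 n^{4-8\beta})=o(r_n^2)$ uniformly over $|v-v'|\le Kr_n$. Combined with a chaining bound (for instance a Kolmogorov-type moment criterion with higher moments via the diagram formula), this gives the supremum over the shrinking neighbourhood. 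If the diagram-formula computations become heavy, the fallback is the reduction principle of \citet{dehling1989empirical}, applied uniformly in $v$: $\sup_v|G_n(v)-c_2(v)S_n/n|=o_{\Pr}(r_n)$ for $\beta<3/4$. This reduces (b) directly to the smoothness of $c_2$. The boundary case $\beta=3/4$ would be handled the same way, with the extra $\sqrt{\log n}$ absorbed by the $|v-v'|$ factor.
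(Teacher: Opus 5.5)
Your architecture is the paper's: the exact rank identity $F_n(\qhat)=k_n/n$, a local oscillation bound on a neighborhood of radius $O(r_n)$, and a Taylor step. The difference is in step (b), where you work much harder than needed. The paper never isolates the rank-two term. It uses one inequality, \eqref{eq:lm-even-covariance}: for any centered even square-integrable $A$ of a standard normal, $0\le\Cov\{A(W_0/\sigma_W),A(W_h/\sigma_W)\}\le\|A(Z)\|_2^2\rho(h)^2$. Apply it to the increment $\ind(V_t\le v)-\ind(V_t\le q)-\{F(v)-F(q)\}$ with $|v-q|\le\delta$. Its squared norm is $O(\delta)$, so $\E D_{a,N}(v)^2\le C\delta r_N^2$ in all three regimes at once, including $\beta=3/4$. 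Because of the factor $\delta$, no chaining is required. Monotonicity and a single grid of $J$ points give $\E\sup_{|v-q|\le\delta}|D_{a,N}(v)|^2\le C(J+1)\delta r_N^2+C\delta^2/J^2$. With $\delta=C_0r_n$ and $J=\lceil r_n^{-1/2}\rceil$ this is $O(r_n^{5/2})=o(r_n^2)$. Your alternatives could also be made to work: the rank-two/remainder split, dyadic chaining, diagram-formula moments, or the Dehling--Taqqu uniform reduction principle (the last only for $\beta<3/4$). They add nothing here.

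Several of your sub-arguments need correcting.

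(i) The anticipated obstacle is an arithmetic slip. Since $r_n^2=n^{2-4\beta}$, the pointwise remainder variance $O(n^{4-8\beta})$ for $\beta<5/8$ is $O(r_n^4)=o(r_n^2)$. This is \eqref{eq:lm-higher-order-remainder}, and no extra $|v-v'|$ factor is needed.

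(ii) For $\beta>3/4$ you cannot reuse the argument of Theorem~\ref{thm:bahadur}. Its bound $\|D_{a,N}(v)\|_2\le\psi(\delta)/\sqrt N$ rests on $\Theta<\infty$. In this model $\theta_k\asymp k^{-\beta/2}$, so $\Theta=\infty$ for every $\beta\in(1/2,1)$. Summable covariances at one fixed threshold do not control increments. Your ``alternative'' increment bound is what is needed, and the inequality above supplies it.

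(iii) In step (a), a coarse grid with a union bound does not give $\sup_v|F_n(v)-F(v)|=O_{\Pr}(r_n)$. Resolving mesh $r_n$ takes about $r_n^{-1}$ points, and second moments then yield only $O_{\Pr}(r_n^{1/2})$, for instance when $\beta>3/4$. No global supremum is needed. Use Chebyshev at $q\pm u$ with $u\ge Kr_n$, together with $\sup_v\Var F_n(v)\le Cr_n^2$ from \eqref{eq:lm-cdf-variance} and the $O(n^{-1})$ rank correction. This gives $\Pr(|\qhat-q|>u)\le Cr_n^2/u^2$, as in Lemma~\ref{lem:lm-scale}. This localization must come before step (b), not follow from it as your ``follows from the representation itself'' suggests.
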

\nopagebreak[4]
\noindent\emph{Proof: Appendix~\ref{app:lm-bahadur}.}

The representation is proved for the exact conformal rank, using
classical Gaussian empirical-process and quantile arguments
\citep{dehling1989empirical,coeurjolly2008bahadur}.
The proof also provides an upper bound on marginal coverage error:
$\sup_{t>n}|\Pr(V_t\le\qhat)-(1-\alpha)|=O(r_n)$
(Lemma~\ref{lem:lm-scale});
the supremum concerns deterministic future indices.

The following theorem provides the asymptotic distribution of realized
coverage in the three memory regimes and identifies the transition
from a Gaussian law to a law determined by a Rosenblatt process.

\begin{theorem}[Limiting distributions of realized coverage]
\label{thm:lm-coverage}
Under Assumption~\ref{ass:lm} and $m/n\to\kappa\in(0,\infty)$:

\textnormal{(i)} If $3/4<\beta<1$, then
\begin{equation}
\sqrt n\{\coverage-(1-\alpha)\}
\xrightarrow{d}N\!\left(0,\sigma_{\rm cov}^2(1+\kappa^{-1})\right).
\label{eq:lm-central-law}
\end{equation}
Here $\sigma_{\rm cov}^2$ is the covariance sum defined in
Theorem~\ref{thm:realized} and is strictly positive in this model.

\textnormal{(ii)} If $\beta=3/4$, then
\begin{equation}
\sqrt{\frac n{\log n}}\{\coverage-(1-\alpha)\}
\xrightarrow{d}
N\!\left(0,
\left\{\frac{qf(q)C_\gamma}{\sigma_W^2}\right\}^{\!2}
(1+\kappa^{-1})\right).
\label{eq:lm-critical-law}
\end{equation}

\textnormal{(iii)} If $1/2<\beta<3/4$, then
\begin{equation}
\frac{\coverage-(1-\alpha)}{n^{1-2\beta}}
\xrightarrow{d}
\frac{qf(q)C_\gamma}{\sigma_W^2\sqrt{(3-4\beta)(4-4\beta)}}
\left\{R_H(1)-\frac{R_H(1+\kappa)-R_H(1)}{\kappa}\right\},
\label{eq:lm-noncentral-law}
\end{equation}
where $H=2-2\beta$ and $R_H$ is the variance-one Rosenblatt process,
oriented as the limit of centered-square sums.
\end{theorem}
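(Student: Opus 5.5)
Decompose the realized coverage into a test-window fluctuation and a calibration-cutoff fluctuation. Write $I_t=\ind(V_t\le q)-(1-\alpha)$. Then
\begin{equation*}
\coverage-(1-\alpha)=\frac1m\sum_{t=n+1}^{n+m}I_t
+\frac1m\sum_{t=n+1}^{n+m}\{\ind(V_t\le\qhat)-\ind(V_t\le q)-(F(\qhat)-F(q))\}
+\{F(\qhat)-(1-\alpha)\}.
\end{equation*}
By Theorem~\ref{thm:lm-bahadur} and a delta step using the continuity of $f$ at $q$, the last term equals $-n^{-1}\sum_{t=1}^nI_t+o_{\Pr}(r_n)$. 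The middle term is a local empirical-process increment over a random interval of length $O_{\Pr}(r_n)$. I would show it is $o_{\Pr}(r_n)$ with the same uniform local oscillation bound for Gaussian subordinated empirical processes used to prove Theorem~\ref{thm:lm-bahadur} (Dehling--Taqqu type, applied on the window $n+1,\dots,n+m$). The justification is that the Hermite-rank-two part of the indicator difference is smooth in $v$, and the remainder is of smaller order. Everything therefore reduces to the joint limit of the two partial sums $S_1=\sum_{t=1}^n I_t$ and $S_2=\sum_{t=n+1}^{n+m}I_t$, which enter as $m^{-1}S_2-n^{-1}S_1$.

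Next, expand $I_t$ in Hermite polynomials of $W_t/\sigma_W$. Since $\ind(|W|\le q)$ is even, it has no first-order term. Its second coefficient is $\E[\ind(|W|\le q)(W^2/\sigma_W^2-1)]=-2q\phi(q/\sigma_W)/\sigma_W$. Using $f(q)=2\phi(q/\sigma_W)/\sigma_W$, this becomes $-qf(q)$. Hence
\begin{equation*}
I_t=-\frac{qf(q)}{2}\Big(\frac{W_t^2}{\sigma_W^2}-1\Big)\cdot\frac{2}{2}+R_t,
\end{equation*}
with the normalization fixed so that the coefficient of $H_2(W_t/\sigma_W)$ is $-qf(q)/2$. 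The remainder $R_t$ has Hermite rank at least four, and its covariances are $O(h^{4(1-2\beta)})$, which is summable. Its partial sums over windows of length $n$ are therefore $O_{\Pr}(\sqrt n)=o(n r_n)$ whenever $\beta\le3/4$.

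Treat the three regimes separately.
\begin{itemize}
\item[(i)] For $\beta>3/4$ all covariances of $I_t$ are summable. I would apply a Breuer--Major type CLT for functionals of linear Gaussian processes, jointly for the two adjacent windows. The cross-covariance of the window sums is $o(n)$, so $S_1/\sqrt n$ and $S_2/\sqrt m$ are asymptotically independent $N(0,\sigma_{\rm cov}^2)$. Combining them gives variance $\sigma_{\rm cov}^2(1+\kappa^{-1})$. Positivity of $\sigma_{\rm cov}^2$ follows from the Hermite expansion: each rank contributes a nonnegative spectral term, and the rank-two term is strictly positive because $q f(q)\neq0$.
\item[(ii)] For $\beta=3/4$, the rank-two sum has variance $\sim (qf(q)C_\gamma/\sigma_W^2)^2\, n\log n$. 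This uses $\Cov(H_2(W_0/\sigma_W),H_2(W_h/\sigma_W))=2(\gamma_h/\sigma_W^2)^2\sim 2C_\gamma^2 h^{-1}/\sigma_W^4$. The cross-window covariance is only $O(n)$, so again the two windows decouple after normalization by $\sqrt{n\log n}$. Gaussianity follows from a fourth-moment theorem for the second chaos, whose contraction norms vanish at the logarithmic rate. This yields the variance $\{qf(q)C_\gamma/\sigma_W^2\}^2(1+\kappa^{-1})$.
\item[(iii)] For $\beta<3/4$, apply the non-central theorem (Taqqu/Dobrushin--Major, in the linear-process form of Surgailis) to the process $t\mapsto\sum_{s\le nt}H_2(W_s/\sigma_W)$. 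It converges, normalized by $n^{2-2\beta}$, to a multiple of $R_H$ with $H=2-2\beta$. The constant is read off by matching variances: $\Var\sum_{s\le n}H_2\sim 4C_\gamma^2\sigma_W^{-4}n^{4-4\beta}/\{(3-4\beta)(4-4\beta)\}$. Here $\sum_{s\le n}(W_s^2-\sigma_W^2)$ plays the role of the centered-square sums fixing the orientation. Evaluating the functional limit at times $1$ and $1+\kappa$ gives $S_1$ and $S_2$ jointly. The sign from $-qf(q)$ combined with $m^{-1}S_2-n^{-1}S_1$ produces the stated combination $R_H(1)-\{R_H(1+\kappa)-R_H(1)\}/\kappa$.
\end{itemize}

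The main obstacle is controlling the middle term uniformly near $q$ when the cutoff $\qhat$ depends on the calibration window and the test indicators are strongly dependent on it. A naive bound gives only $O_{\Pr}(r_n)$, not $o_{\Pr}(r_n)$. I would handle it by applying the reduction principle uniformly in $v$ to both windows: the rank-two component of $\ind(V_t\le v)-F(v)$ equals a smooth coefficient $-vf(v)/2$ times the same chaos term. Its increment over $|v-q|\le Cr_n$ is then $O(r_n)\cdot O_{\Pr}(r_n)$. The uniform remainder is bounded by the Dehling--Taqqu chaining estimate, or by a direct second-moment bound on a grid together with monotonicity.
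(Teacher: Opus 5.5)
Your architecture matches the paper's. You reduce to the fixed-threshold contrast $m^{-1}\sum_{t=n+1}^{n+m}\xi_t-n^{-1}\sum_{t=1}^n\xi_t$, which is the paper's \eqref{eq:lm-coverage-contrast}, obtained in the proof of Theorem~\ref{thm:lm-bahadur}. You split off the rank-two term with coefficient $-qf(q)/(2\sigma_W^2)$ and treat the three regimes separately.

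The genuine difference is regime (i). You invoke a Breuer--Major theorem for a rank-two functional with $\sum_h\rho(h)^2<\infty$. The paper instead proves $\sum_k\|\mathcal P_0\xi_k\|_2<\infty$ from \eqref{eq:lm-projection-bound} and uses a martingale approximation with the martingale CLT. Your route is shorter and uses only the Gaussian covariance structure. The paper chooses projections because the same bounds give the copied-block estimate \eqref{eq:lm-central-copy} needed later for Theorem~\ref{thm:blocks}.

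The other regimes differ only in the tool cited:
\begin{itemize}
\item In (ii), your fourth-moment/contraction argument is the same mechanism as the paper's diagonalization, with $\max_j|\lambda_{n,j}|=O\{(\log n)^{-1/2}\}$ followed by Lyapunov.
\item In (iii), Taqqu/Dobrushin--Major applies directly because $W$ is Gaussian, so Surgailis's linear-process form is not needed. The paper cites Dehling--Taqqu for the indicator.
\item Your positivity argument for $\sigma_{\rm cov}^2$ is fine. The paper gets it more simply: every lag covariance is nonnegative, so $\sigma_{\rm cov}^2\ge\alpha(1-\alpha)$.
\end{itemize}

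Two points need repair.

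\emph{Rank-$\ge4$ remainder.} Its covariances are $O(h^{4-8\beta})$, which is \emph{not} summable for $1/2<\beta\le5/8$. In that range $\|\sum_{t\le N}R_t\|_2^2$ is of order $N^{6-8\beta}$, or $N\log N$ at $\beta=5/8$, rather than $N$. So your claimed $O_{\Pr}(\sqrt n)$ is false there. The conclusion survives, because $N^{6-8\beta}=o(N^{4-4\beta})$ whenever $\beta>1/2$; this is exactly the paper's \eqref{eq:lm-higher-order-remainder}.

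\emph{Test-window cutoff term.} A reduction principle uniform in $v$ is not what drives this step, and it fails for $\beta>3/4$, where higher Hermite orders are of the same order as the rank-two part. The grid-plus-monotonicity argument must be run on the full increment $D_{n,m}(v)$, since the remainder alone is not monotone. What works in all three regimes at once is the following. The increment $\ind(V_t\le v)-\ind(V_t\le q)-\{F(v)-F(q)\}$ is a centered even function with variance $O(\delta)$. By the even-Hermite inequality, its lag-$h$ covariance is at most $C\delta\rho(h)^2$, so $\E D_{a,N}(v)^2\le C\delta r_N^2$. Taking $\delta=C_0r_n$ and $J\asymp r_n^{-1/2}$ grid points then gives $o(r_n^2)$. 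This is the paper's \eqref{eq:lm-local-grid}, and your rank-two split is unnecessary for this step.
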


Part~\textnormal{(i)} has the same Gaussian form as
Theorem~\ref{thm:realized}, although the indicator FDM is not summable
in this linear model. At $\beta=3/4$, the limit remains Gaussian but
requires the logarithmic normalization. In the ultra-long-memory
regime $1/2<\beta<3/4$, it becomes non-Gaussian and retains dependence
between adjacent Rosenblatt increments \citep{taqqu1975weak}.
Appendix~\ref{app:lm-laws} proves the three laws. As before, the
calibration set and test set are adjacent, with no independence
assumption between them.

We use block sampling to estimate the limiting distribution of realized
coverage, building on \citet{zhang2013block}. Within the calibration set,
we repeat calibration and adjacent evaluation on shorter windows with
the same calibration--test length ratio. An estimated normalization
makes the procedure applicable across all three regimes.
Appendix~\ref{app:lm-procedure} gives the construction.

\begin{samepage}
\begin{theorem}[Block sampling distribution consistency]
\label{thm:blocks}
Suppose Assumption~\ref{ass:lm} holds, $m/n\to\kappa\in(0,\infty)$,
$b=\lfloor n^\zeta\rfloor$ for a fixed $0<\zeta<1$, and
$a_j=j^{-\beta}\{c+O(j^{-\varphi})\}$ for some $\varphi>0$.
Let $\widehat G_{n,b}$ and $R_N$ be the block CDF and normalization
defined in Appendix~\ref{app:lm-procedure}.
Let $G$ be the CDF of the limit of
$\{\coverage-(1-\alpha)\}/R_n(\beta)$, a constant rescaling of
the corresponding law in Theorem~\ref{thm:lm-coverage}. Then
\[
\sup_{x\in\mathbb R}|\widehat G_{n,b}(x)-G(x)|
\xrightarrow{\Pr}0.
\]
\end{theorem}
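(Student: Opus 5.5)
We need to plan a proof of block sampling consistency. The construction is in the appendix, which we have not seen, so we infer it. For each block start $i$, take a calibration window of length $b$ and an adjacent test window of length $m_b\approx\kappa b$. Compute the block realized coverage $\widehat{\rm Cov}^{(i)}_{b,m_b}$, normalize by $R_N(\widehat\beta)$ evaluated at scale $b$, and let $\widehat G_{n,b}$ be the empirical CDF over the $N\approx n-(1+\kappa)b$ starting indices.

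Because $G$ is continuous (Gaussian, or a nondegenerate linear combination of Rosenblatt values, which has a density), Pólya's theorem reduces uniform convergence to pointwise convergence in probability at each $x$.

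The plan has three steps.

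\textbf{Step 1 (oracle normalization).} Replace $R_N$ by the true rate $R_b(\beta)$, i.e.\ $r_b$ up to the constant used in the theorem, and define the oracle block CDF $\widetilde G_{n,b}(x)=N^{-1}\sum_i\ind\{(\widehat{\rm Cov}^{(i)}-(1-\alpha))/R_b(\beta)\le x\}$. By stationarity each summand has mean $\Pr\{(\widehat{\rm Cov}_{b,m_b}-(1-\alpha))/R_b(\beta)\le x\}$. Theorem~\ref{thm:lm-coverage}, applied with $n$ replaced by $b$ and $m_b/b\to\kappa$, shows this mean tends to $G(x)$.

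\textbf{Step 2 (variance of the block average).} Show $\Var\widetilde G_{n,b}(x)\to0$. This is the main obstacle, since under long memory distant blocks remain correlated and mixing is unavailable. Following the paper's coupling philosophy and \citet{zhang2013block}, I would use the expansion from Theorem~\ref{thm:lm-bahadur} together with the test-window average. For $\beta<3/4$, the block statistic is asymptotically a quadratic (Hermite rank two) functional plus negligible terms.

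For blocks $i,i'$ with $|i-i'|\ge K b$, decompose each residual $W_t$ into a local part (innovations in $(i-Kb,\,i+(1+\kappa)b]$) and a far-past part. The far-past contribution to a block's normalized statistic has a common low-frequency component. I would show it is $o_{\Pr}(1)$ relative to $R_b$ when $b/n\to0$, or more carefully that it shifts the statistic by an amount whose variance, at scale $b$, is of order $(b/(Kb))^{2\beta-1}$-type ratios. Making $K\to\infty$ drives this to zero.

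Once the far parts are removed, the coupled statistics for well-separated blocks are independent. Hence $\Cov$ of the indicators is small, with an additional smoothing step using continuity of $G$ to pass from coupled statistics to indicators. Averaging over the $O(N^2)$ pairs, the fraction of close pairs, $O(Kb/N)$, vanishes since $\zeta<1$.

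The delicate point is bounding the far-past effect through the Bahadur cutoff. The quantile shift enters via $f(q)$ times a term that is linear in the Hermite-2 projection. I would control it in $L^2$ using the covariance asymptotics $\Cov(W_0,W_h)\sim C_\gamma h^{1-2\beta}$, and the refinement $a_j=j^{-\beta}\{c+O(j^{-\varphi})\}$ to make the remainder uniform.

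\textbf{Step 3 (estimated normalization).} Show $R_N(\widehat\beta)/R_b(\beta)\xrightarrow{\Pr}1$ and that the ratio of full-sample scales is handled consistently. The estimator $\widehat\beta$ built from calibration residuals should satisfy $\widehat\beta-\beta=O_\Pr(n^{-\gamma})$ for some $\gamma>0$. Since $R_b(\beta)$ depends on $\beta$ through $b^{1-2\beta}$, the error is $b^{2(\beta-\widehat\beta)}=\exp\{O_\Pr(n^{-\gamma}\log n)\}\to1$, and similarly for the constants and the regime boundary at $3/4$. I expect this to require a smooth interpolation in the definition of $R_N$ near $\beta=3/4$ to avoid a discontinuous switch.

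Finally, with $\rho=R_N/R_b\to1$ uniformly, sandwich: for any $\epsilon>0$, with probability tending to one,
\[
\widetilde G_{n,b}(x-\epsilon)\le\widehat G_{n,b}(x)\le\widetilde G_{n,b}(x+\epsilon).
\]
Steps 1--2 and continuity of $G$ then give $\widehat G_{n,b}(x)\xrightarrow{\Pr}G(x)$, and Pólya's theorem yields the uniform statement.
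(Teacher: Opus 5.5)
You have the right architecture, and it matches the paper's: the mean converges by stationarity and the block-scale version of Theorem~\ref{thm:lm-coverage}; the variance vanishes by coupling well-separated paired windows, with the far past replaced by an independent copy, plus an $\epsilon$-smoothing of indicators using continuity of $G$; then $(\widehat\beta-\beta)\log n=o_{\Pr}(1)$ with a normalization continuous across $\beta=3/4$; and finally a sandwich/P\'olya step.

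The gap is in Step~2, which you carry out only through the Hermite-rank-two reduction.

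\begin{itemize}
\item That reduction still works at $\beta=3/4$, because the higher-order remainder is $o(s_{2,N}^2)$ for all $\beta\le3/4$.
\item It fails for $3/4<\beta<1$. There every even Hermite order contributes at scale $\sqrt b$. You must show directly, for $\xi_t=\ind(V_t\le q)-(1-\alpha)$, that replacing all innovations at times $\le0$ changes $b^{-1/2}\sum_{t=d+1}^{d+b}\xi_t$ by $o(1)$ in $L^2$ once $d\ge b$.
\item Neither the general FDM machinery nor per-term bounds give this. In this model $\theta_k\asymp k^{-\beta/2}$, so $\Theta=\infty$. The per-term estimate $\|\xi_t-\xi_t^*\|_2\le Cd^{(1-2\beta)/4}$, summed over the window, yields $\sqrt b\,d^{(1-2\beta)/4}$, which diverges for $d\asymp b$.
\end{itemize}

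The missing idea is a predictive-projection bound. Conditioning $\xi_k$ on $\mathcal F_0$ integrates out a Gaussian of variance at least $a_0^2$. This leaves an even, smooth function of the remaining past whose derivative vanishes at the origin, so the first-order effect of $\varepsilon_0$ cancels. Hence
\[
d_k=\|\E(\xi_k\mid\mathcal F_0)-\E(\xi_k\mid\mathcal F_{-1})\|_2\le C|a_k|\Bigl(\sum_{j\ge k}a_j^2\Bigr)^{1/2}=O(k^{1/2-2\beta}),
\]
which is summable exactly when $\beta>3/4$. Two further facts then give $\|\sum_{t=d+1}^{d+b}(\xi_t-\xi_t^*)\|_2\le\sqrt b\,e(d)$ with $e(d)\to0$:
\begin{itemize}
\item orthogonality across $t$ of these projections, in the filtration enlarged by the copied past innovations;
\item the per-term smallness above, used as a cap inside the lag sum.
\end{itemize}
This is what the central-regime variance step needs, and it is Lemma~\ref{lem:lm-projections} in the paper.

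Separately, the ``delicate point'' you name is unnecessary, and not supported by what is proved. You propose to propagate the far-past effect through the random Bahadur cutoff and control it in $L^2$, but the calibration--test contrast expansion is only established with an $o_{\Pr}(r_n)$ remainder. The paper instead runs the coupling and variance argument only for the fixed-threshold oracle contrasts $T_{i,b}$, the normalized test-minus-calibration averages of $\xi_t$. It then transfers to the recalibrated block statistics $Y_{i,b}$ by stationarity. The expected fraction of starts with $|Y_{i,b}-T_{i,b}|>\epsilon$ equals $\Pr(|Y_{0,b}-T_{0,b}|>\epsilon)\to0$. A sandwich between the empirical CDFs of the $T_{i,b}$ at $x\pm\epsilon$ then finishes the argument. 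This needs no $L^2$ or maximal control of the cutoff remainder. Your coupling of the actual statistics could be repaired by working in probability, but separating the two steps is cleaner.
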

\end{samepage}

The estimated quantiles give an asymptotically valid test for realized
coverage and a prediction interval for its future value; probabilities
are over the joint calibration and test sets.
Appendix~\ref{app:lm-block-proof} gives the proof.
Appendix~\ref{app:non-gaussian} discusses the additional arguments needed
for non-Gaussian innovations.

%% file: experiments.tex
\section{Experiments}
\label{sec:experiments}
We examine marginal coverage, the normal approximation for realized
coverage, and block sampling under long memory. Throughout, we use
nonconformity scores based on absolute
errors and adjacent calibration and test sets. The conformal cutoff uses the exact rank
$k_n=\lceil(n+1)(1-\alpha)\rceil$, with $\alpha=0.1$ unless stated
otherwise.

\paragraph{Study 1: marginal coverage.}
This study examines the marginal coverage guarantee in
Theorem~\ref{thm:marginal} using a nonlinear process that satisfies our
FDM conditions but violates mixing assumptions.
Consider $U_t=(U_{t-1}+B_t)/2$, with iid
$B_t\sim\operatorname{Bernoulli}(1/2)$ and independent
$U_0\sim\operatorname{Unif}(0,1)$.
Set $Y_t=U_t^2$ and $V_t=|Y_t-1/3|$.
The score process is not strongly mixing, but its indicator FDM decays
geometrically; the verification is given in
Appendix~\ref{app:study1-details}. Thus the conditions of
Theorem~\ref{thm:marginal}(i) hold, and its bound applies for sufficiently
large $n$. We apply standard split conformal calibration and estimate
next-step marginal coverage at nominal levels 90\% and 95\%.

\begin{table}[htbp]
\caption{For Study 1, marginal coverage (\%). Columns give $n$; rows give
the nominal coverage. At $n=2^4$, the 95\% procedure has an infinite cutoff.}
\label{tab:study1}
\centering
\small
\setlength{\tabcolsep}{3.6pt}
\input{experiment_assets/study1/study1_marginal_coverage.tex}
\end{table}

Table~\ref{tab:study1} illustrates
Theorem~\ref{thm:marginal}(i): standard split conformal calibration
can approach nominal coverage even for this nonmixing process.
The 90\% procedure gives only 88.80\% at $n=2^5$; at $n=2^{12}$, coverage is
90.19\% and 95.06\% for the two nominal levels.
The contribution is a guarantee under FDM conditions where mixing
assumptions fail, without changing the prediction procedure.

\paragraph{Study 2: realized-coverage inference for general time series.}
This study evaluates realized-coverage inference for general time series
under the FDM conditions of Theorem~\ref{thm:realized} and
Proposition~\ref{prop:variance}.
We consider ARCH(1) and stochastic volatility (SV), with absolute-error
scores. Both satisfy the CLT and variance-estimation conditions; their
specifications and verification are given in
Appendix~\ref{app:study2-details}.
We use $m=n$.
The variance is estimated from the calibration set using
nonoverlapping blocks of length $\ell=\lfloor n^{2/3}\rfloor$.
We compare the empirical distribution of
$\{\coverage-(1-\alpha)\}/\widehat{\mathrm{SE}}$ with its
standard normal limit, where
$\widehat{\mathrm{SE}}=\widehat\sigma_{\rm cov}\sqrt{1/n+1/m}$.
We also report two-sided rejection rates using standard normal critical
values. Rates near the 5\% and 10\% test levels indicate accurate test
calibration; nominal prediction coverage remains 90\%.

\begin{figure}[htbp]
\centering
\includegraphics[width=\linewidth]{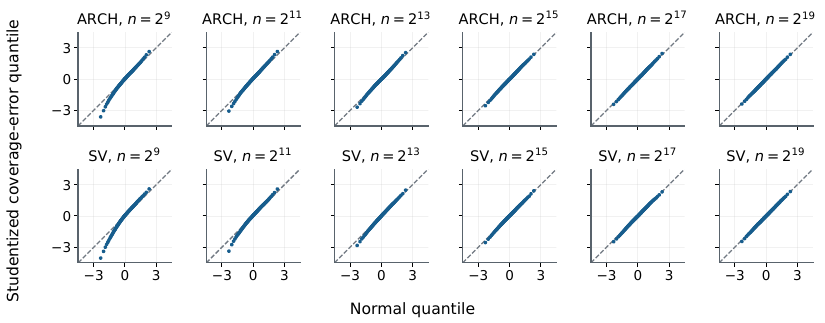}
\caption{For Study 2, normal QQ plots of studentized realized-coverage errors.
Points closer to the dashed equality line indicate closer agreement with
the standard normal distribution $N(0,1)$. No empirical recentering or
rescaling is used.}
\label{fig:study2}
\end{figure}

In Figure~\ref{fig:study2}, the points approach the dashed line as $n$
grows: the studentized realized-coverage error increasingly follows
$N(0,1)$ in both models. This supports Theorem~\ref{thm:realized} and
Proposition~\ref{prop:variance}, and the use of the estimated standard
error for coverage testing. Tail discrepancies at small $n$ lead to
over-rejection with normal critical values.
Appendix Table~\ref{tab:study2-comparisons} quantifies this: the 5\%
rejection rates fall from 10.54\% and 12.92\% at $n=2^9$ to 5.49\%
and 5.61\% at $n=2^{19}$ for ARCH and SV. The corresponding 10\%
rates at the largest $n$ are 10.77\% and 10.94\%. The normal
approximation therefore gives increasingly well-calibrated tests.

We also compare with the exchangeable standard error
$\sqrt{\alpha(1-\alpha)(1/n+1/m)}$, which substantially understates
coverage uncertainty in these models. At $n=2^{19}$, the simulated standard deviations are
1.40 and 1.97 times that benchmark for ARCH and SV; nominal 5\%
tests using it reject 16.04\% and 32.10\% of the time. Ignoring temporal
dependence would therefore lead to frequent false alarms about coverage.
The corrected $t$ reference reduces the rejection rates at $n=2^9$
to 4.75\% and 6.19\%, using the same variance estimates.
Appendix~\ref{app:study2-comparisons}, Table~\ref{tab:study2-comparisons},
gives the full comparison.

\paragraph{Study 3: block sampling method under long memory.}
This study evaluates the block sampling method for Gaussian linear
processes across the three limiting regimes in
Theorems~\ref{thm:lm-coverage} and~\ref{thm:blocks}.
Consider the Gaussian linear model
$Y_t=\sum_{j\ge0}(1+j)^{-\beta}\varepsilon_{t-j}$ with iid
$\varepsilon_t\sim N(0,1)$ and $V_t=|Y_t|$.
We set $\beta\in\{0.9,0.75,0.6\}$, covering the central Gaussian,
critical Gaussian, and non-Gaussian regimes, respectively.
The $\beta=0.6$ model has extremely strong long memory, so realized
coverage is expected to converge very slowly to $1-\alpha$: its error
scale is $n^{-0.2}$ rather than the usual $n^{-1/2}$.
For each $n\in\{2^{12},2^{14},2^{16},2^{18},2^{20}\}$, we use $m=n$
and a separate independent reference sample.
We use $h=\lfloor\sqrt n\rfloor$ for memory estimation
and pairs of adjacent blocks, each of length $b=\lfloor n^{2/3}\rfloor$,
within the calibration set. The memory regime is not supplied;
simulation details are given in Appendix~\ref{app:study3-details}.

\begin{figure}[!htb]
\centering
\includegraphics[width=\linewidth]{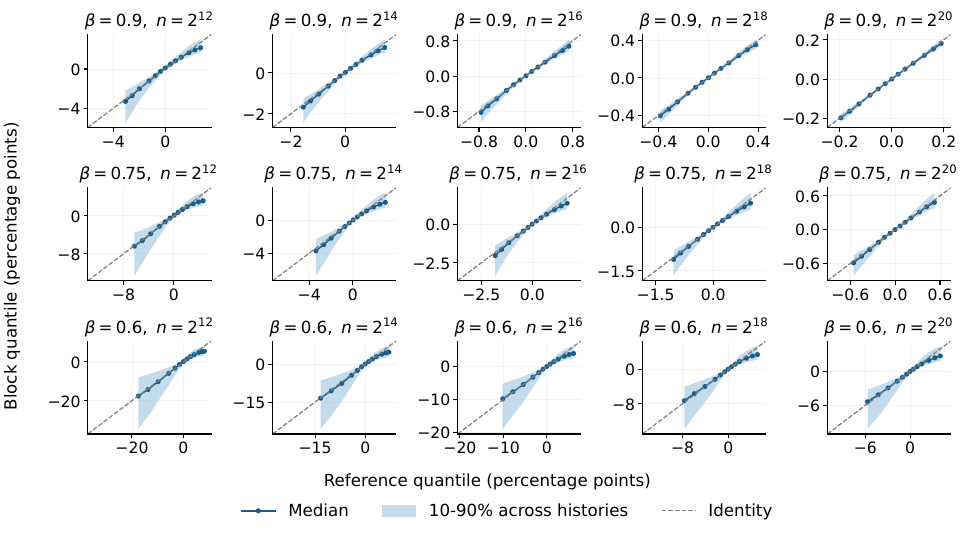}
\caption{For Study 3, block versus independent finite-$n$ reference error quantiles
(percentage points). Curves are medians across histories; shading is
the pointwise 10--90\% range, not a confidence band.
Closer agreement with the dashed equality line means more accurate
estimation of coverage-error quantiles, not agreement with a Gaussian law.}
\label{fig:study3}
\end{figure}

\begin{table}[!htb]
\caption{For Study 3, rejection rates (\%) at the 5\% test level.
BS denotes block sampling;
all methods use length $b=\lfloor n^{2/3}\rfloor$.
Rates close to 5\% indicate accurate two-sided rejection rates.}
\label{tab:study3}
\centering
\small
\setlength{\tabcolsep}{4pt}
\input{experiment_assets/study3_benchmarks_10k_20260925/benchmark_rejection_05.tex}
\end{table}

In Figure~\ref{fig:study3}, agreement with the dashed line means that
block quantiles approximate actual coverage-error quantiles from independent
reference trajectories. Together with decreasing CDF distances
(Appendix Table~\ref{tab:study3-benchmark-ks}), this supports
Theorem~\ref{thm:blocks}: block sampling estimates the distribution needed
for coverage testing, including in the non-Gaussian regime.
For $\beta=0.6$, the 5\% and 10\% rejection rates decrease from
13.85\% and 17.18\% to 5.65\% and 10.34\%, respectively, across
the sample sizes studied (Table~\ref{tab:study3} and
Appendix Table~\ref{tab:study3-benchmark10}).
For reference, Appendix~\ref{app:study3-extra} also reports $\beta=0.7$,
another ultra-long-memory case, with its QQ plots and rejection rates.

We compare block sampling with the moving-block bootstrap (MBB) and
Gaussian inference using a heteroskedasticity and autocorrelation
consistent (HAC) variance estimator on the same trajectories
over the full grid, using the same length
$b=\lfloor n^{2/3}\rfloor$ for all three methods.
Table~\ref{tab:study3} shows that the benchmarks substantially
underestimate uncertainty under stronger memory, whereas block sampling
improves test calibration. At $\beta=0.6$ and $n=2^{20}$, the 5\%
rejection rate is 5.65\% for block sampling, compared with 44.35\%
and 43.77\% for MBB and HAC.
Appendix~\ref{app:study3-benchmarks} gives the procedures, 10\%
rejection rates, and distribution comparisons.

The strong memory also makes coverage
uncertainty substantial: even at $n=2^{20}$, the reference 2.5\% and
97.5\% quantiles of realized coverage for $\beta=0.6$ are 84.29\% and
94.04\%, despite nominal prediction coverage of 90\%.

\paragraph{Real Data Analysis.}
We analyze hourly electricity-demand forecasts in New England
(Appendix~\ref{app:real-data-isne}). The realized coverage is approximately
91.2\% against a nominal level of 90\%. The exchangeability-based test
rejects nominal coverage, whereas our block sampling test does not.
This example illustrates how accounting for temporal dependence can
affect the interpretation of an observed coverage deviation.

%% file: experiment_assets/study1/study1_marginal_coverage.tex
\begin{tabular}{lrrrrrrrrr}
\toprule
Nominal & $2^{4}$ & $2^{5}$ & $2^{6}$ & $2^{7}$ & $2^{8}$ & $2^{9}$ & $2^{10}$ & $2^{11}$ & $2^{12}$ \\
\midrule
90\% & 89.61 & 88.80 & 89.48 & 89.84 & 90.30 & 90.42 & 89.94 & 90.07 & 90.19 \\
95\% & 100.00 & 94.33 & 94.03 & 94.92 & 94.91 & 94.62 & 94.89 & 94.88 & 95.06 \\
\bottomrule
\end{tabular}

%% file: experiment_assets/study3_benchmarks_10k_20260925/benchmark_rejection_05.tex
\begin{tabular}{crrrrrrrrr}
\toprule
$n=m$ & \multicolumn{3}{c}{$2^{12}$} & \multicolumn{3}{c}{$2^{16}$} & \multicolumn{3}{c}{$2^{20}$} \\
\cmidrule(lr){2-4}\cmidrule(lr){5-7}\cmidrule(lr){8-10}
$\beta$ & BS & MBB & HAC & BS & MBB & HAC & BS & MBB & HAC \\
\midrule
0.9 & 8.90 & 11.01 & 10.78 & 5.48 & 6.68 & 6.79 & 5.24 & 5.58 & 5.71 \\
0.75 & 10.09 & 17.06 & 16.07 & 5.94 & 14.59 & 14.55 & 4.27 & 13.00 & 12.87 \\
0.6 & 13.85 & 32.11 & 27.54 & 7.89 & 37.23 & 35.56 & 5.65 & 44.35 & 43.77 \\
\bottomrule
\end{tabular}

%% file: app_general.tex
\section{Proofs for general time series}
\label{app:general}

This appendix proves the general FDM results and explains how their
assumptions can be checked. Appendix~\ref{app:general-tools} develops
the empirical-CDF bounds and dependence transfer;
Appendix~\ref{app:general-examples} verifies the examples.
Appendices~\ref{app:general-marginal}, \ref{app:general-bahadur},
and~\ref{app:general-clt} then prove marginal validity, the Bahadur
representation, and the realized-coverage CLT, respectively.
Appendix~\ref{app:general-variance} proves variance-estimator consistency
and justifies both the normal and corrected $t$ references for inference.

Throughout this appendix let
$\mathcal F_j=\sigma(\varepsilon_i:i\le j)$ and
$P_jU=\E(U\mid\mathcal F_j)-\E(U\mid\mathcal F_{j-1})$.
For a random variable constructed from the innovations, a superscript
$*$ denotes replacement of the indicated innovation coordinates by an
independent copy. All nonconformity-score maps are fixed as in the main text.

\subsection{Coupling, empirical control, and dependence transfer}
\label{app:general-tools}

For a causal $U\in L^r$, $r\ge1$, with single-coordinate replacement
$U^{(j)}$, conditional integration gives
\begin{equation}
 P_jU=\E(U-U^{(j)}\mid\mathcal F_j),\qquad
 \|P_jU\|_r\le\|U-U^{(j)}\|_r.
 \label{eq:general-projection-coupling}
\end{equation}
The identity applies when $U$ is measurable with respect to
$\mathcal F_t$, $t\ge j$; innovations after $j$ are integrated out.
This is the standard projection argument associated with functional
dependence \citep{wu2005nonlinear}. The iid remote-past tail is trivial,
so every centered causal $L^2$ variable has its martingale projection
decomposition.

Let $F_N^{(a)}$ be the empirical CDF of
$V_{a+1},\ldots,V_{a+N}$, so $F_n^{(0)}=F_n$ for the calibration set.
For a deterministic threshold $v$ and fixed
lag $k$, the variables
$P_{t-k}\{\ind(V_t\le v)-F(v)\}$ are orthogonal across $t$.
Consequently Minkowski's inequality and
\eqref{eq:general-projection-coupling} give
\begin{equation}
 \left\|\sum_{t=a+1}^{a+N}\{\ind(V_t\le v)-F(v)\}\right\|_2
 \le\sum_{k\ge0}\sqrt N\,\theta_k
 =\sqrt N\,\Theta.
 \label{eq:general-empirical-l2}
\end{equation}
The finite sum in $t$ and infinite lag sum can be interchanged because
the displayed lag norms are summable. The bound is uniform in a
deterministic threshold, not an $L^2$ bound on the supremum of the
empirical process.

\begin{lemma}[Dependence transfer for fixed absolute nonconformity scores]
\label{lem:general-transfer}
Let $Z_t=(X_t,Y_t)$ be a causal stationary process, use the Euclidean
norm for $Z_t$, and let $r\ge1$. If $\widehat\mu$ is fixed and
$L$-Lipschitz, then
\[
 \|V_k-V_k^*\|_r\le(1+L)\|Z_k-Z_k^*\|_r.
\]
If the marginal nonconformity-score density is globally bounded by $M$,
then, with $\delta_r^V(k)=\|V_k-V_k^*\|_r$,
\begin{equation}
 \theta_k\le\min\left\{1,\sqrt{1+2M}\,
                    \delta_r^V(k)^{r/(2r+2)}\right\}.
 \label{eq:general-transfer-bound}
\end{equation}
In particular, geometric nonconformity-score FDM decay implies
$\sum_k\theta_k<\infty$; the bound
$\delta_r^V(k)=O(k^{-a})$ suffices when $a>2+2/r$.
\end{lemma}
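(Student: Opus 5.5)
The first bound is a direct comparison. Write $U=\widehat\mu(X_k)$ and $U^*=\widehat\mu(X_k^*)$. Then $V_k=|Y_k-U|$ and $V_k^*=|Y_k^*-U^*|$. By the reverse triangle inequality,
\[
|V_k-V_k^*|\le|Y_k-Y_k^*|+|\widehat\mu(X_k)-\widehat\mu(X_k^*)|\le|Y_k-Y_k^*|+L|X_k-X_k^*|.
\]
Each coordinate difference is bounded by the Euclidean norm $|Z_k-Z_k^*|$, so $|V_k-V_k^*|\le(1+L)|Z_k-Z_k^*|$ pointwise. Taking $L^r$ norms gives the claim. No stationarity is needed for this step.

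For the indicator bound, fix $v$ and set $D=|\ind\{V_k\le v\}-\ind\{V_k^*\le v\}|$. Since $D\in\{0,1\}$, we have $\|D\|_2^2=\Pr(D=1)$, so $\theta_k\le1$ is trivial. For a tolerance $\lambda>0$, the event $D=1$ requires $v$ to lie between $V_k$ and $V_k^*$. Hence either $|V_k-v|\le\lambda$, or $|V_k-V_k^*|>\lambda$. This gives
\[
\Pr(D=1)\le\Pr(|V_k-v|\le\lambda)+\Pr(|V_k-V_k^*|>\lambda)\le2M\lambda+\lambda^{-r}\delta_r^V(k)^r .
\]
The first term uses the density bound $M$ for the marginal of $V_k$; $V_k$ has the stationary marginal $F$. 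The second term is Markov's inequality. The bound is uniform in $v$, so it bounds $\theta_k^2$.

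Now write $\delta=\delta_r^V(k)$ and choose $\lambda=\delta^{r/(r+1)}$, which balances the two terms. Then $\lambda^{-r}\delta^r=\delta^{r-r^2/(r+1)}=\delta^{r/(r+1)}$. Therefore
\[
\theta_k^2\le(2M+1)\,\delta^{r/(r+1)},\qquad
\theta_k\le\sqrt{1+2M}\,\delta^{r/(2r+2)} .
\]
Combined with the trivial bound 1, this is \eqref{eq:general-transfer-bound}. If $\delta=0$, let $\lambda\downarrow0$ to get $\theta_k=0$, which is consistent with the bound.

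For the summability consequences, geometric decay of $\delta_r^V(k)$ makes $\delta^{r/(2r+2)}$ geometric as well, so $\sum_k\theta_k<\infty$. If $\delta_r^V(k)=O(k^{-a})$, then $\theta_k=O(k^{-ar/(2r+2)})$. This sequence is summable when $ar/(2r+2)>1$, that is, when $a>(2r+2)/r=2+2/r$, which is the stated threshold.

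The only delicate point is choosing $\lambda$ to get the stated exponent, and it is routine; there is no real obstacle.
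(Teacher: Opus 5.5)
Your proposal is correct and matches the paper's proof step for step. Both use the reverse triangle inequality for the first bound. Both then split the flip event into $\{|V_k-v|\le\lambda\}$ and $\{|V_k-V_k^*|>\lambda\}$, applying the density bound and Markov's inequality, choose $\lambda=\delta_r^V(k)^{r/(r+1)}$, and sum for the decay conditions. Your treatment of $\delta_r^V(k)=0$ by letting $\lambda\downarrow0$ simply spells out what the paper calls immediate.
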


\begin{proof}
The reverse triangle inequality gives
$|V_k-V_k^*|\le|Y_k-Y_k^*|+L\|X_k-X_k^*\|
\le(1+L)\|Z_k-Z_k^*\|$. For any $u>0$, a threshold-indicator
flip implies either $|V_k-v|\le u$ or $|V_k-V_k^*|>u$. Thus its
probability is at most $2Mu+\delta_r^V(k)^r/u^r$. If the coefficient
is positive, take $u=\delta_r^V(k)^{r/(r+1)}$, then take square roots
and the supremum over $v$. A zero coefficient is immediate. Summing
the resulting bounds gives the last assertions. No conditional score
density is used. This lemma is a sufficient bound, not an equality of
the dependence of data, scores, and indicators.
\end{proof}

\citet[Proposition~2]{barber2026predictive} also transfer dependence
from data to scores, but use total-variation data processing for
finite-memory score maps. Here we control single-innovation
perturbations and their threshold indicators.

\subsection{Examples and their assumptions}
\label{app:general-examples}

For the Bernoulli recursion in Section~\ref{sec:general}, the stationary
solution is $Y_t=\sum_{j\ge0}2^{-j-1}\varepsilon_{t-j}$.
Its binary digits are independent and fair, so $Y_t$ is uniform on
$[0,1]$ and $V_t=|Y_t-1/2|$ is uniform on $[0,1/2]$.
The recursion also gives $\Cov(Y_0,Y_h)=2^{-h}/12$ for $h\ge0$.
Single-innovation replacement changes $Y_k$, and therefore $V_k$,
by at most $2^{-k-1}$. Since the score density is two, the probability
of a threshold flip is at most $4\cdot2^{-k-1}=2^{1-k}$.
This proves the stated geometric bound on $\theta_k$ and also the
stronger tail condition in Proposition~\ref{prop:variance}.

Let $T(v)=|2v-1/2|$. Direct substitution in the recursion gives
$V_{t-1}=T(V_t)$ almost surely. For every integer $h\ge1$,
the event $A=\{V_0\le1/4\}$ equals
$A_h=\{T^h(V_h)\le1/4\}$ up to a null set. The former is measurable
in the score past and the latter in its future starting at $h$;
both have probability $1/2$. Hence
$|\Pr(A\cap A_h)-\Pr(A)\Pr(A_h)|=1/4$, so the strong-mixing
coefficient does not tend to zero. In particular the scores are not
$\beta$-mixing.

There are nevertheless no inter-time ties. For $h\ge1$, conditional
on the intervening Bernoulli innovations,
$Y_h=2^{-h}Y_0+d_h$ with a fixed $d_h$. The event
$|Y_h-1/2|=|Y_0-1/2|$ forces one of two fixed values of $Y_0$,
because $2^{-h}\ne1$. The variable $Y_0$ is continuous and independent
of these future innovations, so this event has probability zero.
The marginal score density is positive and constant near every
interior quantile. Thus the conditions of Theorem~\ref{thm:marginal}(i),
Theorems~\ref{thm:bahadur}--\ref{thm:realized}, and
Proposition~\ref{prop:variance} hold for this nonmixing example.
Given the innovation past, however, $V_t$ takes the two values
$Y_{t-1}/2$ and $(1-Y_{t-1})/2$, each with probability $1/2$.
These atoms enter any fixed neighborhood of an interior $q$ with
positive probability. The additional local conditional-density
condition in Theorem~\ref{thm:marginal}(ii) therefore fails.

For the raw long-memory example, one concrete choice is
$M_t=\sum_{j\ge0}(j+1)^{-\beta}\zeta_{t-j}$ with
$1/2<\beta<1$ and iid standard normal $\zeta_t$, and independent
iid normal $\eta_t$. The series is well defined in $L^2$ and almost
surely, and its covariance is asymptotic to a positive constant times
$h^{1-2\beta}$, which is nonsummable. The same holds for $Y_t=M_t+\eta_t$
at nonzero lags. Taking innovation pairs $(\zeta_t,\eta_t)$ and
$\widehat\mu(x)=x$ produces $V_t=|\eta_t|$, so
$\theta_k=0$ for $k\ge1$. The example verifies applicability to
long-memory observations: the fixed predictor removes their common
long-memory component, leaving iid nonconformity scores and indicators.

\subsection{Proof of non-asymptotic marginal coverage bounds}
\label{app:general-marginal}

\begin{proof}[Proof of Theorem~\ref{thm:marginal}]
\emph{Part (i).} The sample-size condition ensures $k_n\le n$. Continuity of $F$
implies that for each level in $(0,1)$ there is a finite threshold
attaining it. Put $A=\Theta^2/n$ within this proof. This is positive:
a threshold with $0<F(v)<1$ has positive indicator variance, whereas
\eqref{eq:general-empirical-l2} with $N=1$ bounds that variance by
$\Theta^2$.

For $2/n<u<\min(\alpha,1-\alpha)$ choose $v_-,v_+$ with
$F(v_\pm)=1-\alpha\pm u$. The conformal rank obeys
$0\le k_n/n-(1-\alpha)<2/n$. If $\qhat>v_+$, then
$F_n^{(0)}(v_+)<k_n/n$; if $\qhat<v_-$, then
$F_n^{(0)}(v_-)\ge k_n/n$. Therefore Chebyshev's inequality and
\eqref{eq:general-empirical-l2} imply
\[
 \Pr(\qhat>v_+)\le\frac{A}{(u-2/n)^2},
 \qquad
 \Pr(\qhat<v_-)\le\frac{A}{u^2}.
\]
These implications hold with ties; no identity for
$F_n^{(0)}(\qhat)$ has been used.
For every deterministic $t>n$, the events at $v_-$ and $v_+$ give
\[
 F(v_-)-\Pr(\qhat<v_-)
 \le\Pr(V_t\le\qhat)
 \le F(v_+)+\Pr(\qhat>v_+).
\]
No independence is required. Both one-sided deviations from $1-\alpha$
are consequently at most $u+A/(u-2/n)^2$.

On $u>2/n$, this expression is minimized at
$u=2/n+(2A)^{1/3}$. The sample-size condition makes this choice
admissible, and its value is $2/n+3(A/4)^{1/3}$.
This proves part~(i) and its uniformity over deterministic future
indices. The tolerance $u$ is only a proof device.

\emph{Part (ii).} Again put $A=\Theta^2/n$, and let $\Delta_n=|\qhat-q|$ and
$d_n=k_n/n-(1-\alpha)$. Continuity of $F$ gives $F(q)=1-\alpha$.
The local density lower bound implies
$c_fe\le\min\{\alpha,1-\alpha\}$, so the sample-size condition
ensures $k_n\le n$ and $0\le d_n<2/n\le c_fe/2$.
For $d_n/c_f<u\le e$, the order-statistic inequalities and
\eqref{eq:general-empirical-l2} give
\[
 \Pr(\qhat>q+u)\le\frac{A}{(c_fu-d_n)^2},\qquad
 \Pr(\qhat<q-u)\le\frac{A}{(c_fu+d_n)^2}.
\]
Consequently
\[
 \Pr(\Delta_n>u)
 \le\min\left\{1,\frac{2A}{(c_fu-d_n)^2}\right\},
 \qquad d_n/c_f<u\le e.
\]
Integrating this bound and using the trivial bound one below $d_n/c_f$
yields
\begin{align*}
 \E(\Delta_n\wedge e)
 &\le\frac{d_n}{c_f}
       +\frac1{c_f}\int_0^\infty\min\{1,2A/x^2\}\,dx
   =\frac{d_n+2\sqrt{2A}}{c_f},\\
 \Pr(\Delta_n>e)
 &\le\frac{2A}{(c_fe-d_n)^2}
   \le\frac{8A}{c_f^2e^2}.
\end{align*}

Clip the cutoff to the density neighborhood:
$\widetilde q_n=\max\{q-e,\min\{\qhat,q+e\}\}$.
For a deterministic $t>n$, both cutoffs are $\mathcal F_{t-1}$-measurable.
Let $K_t(v)$ be a version of $\Pr(V_t\le v\mid\mathcal F_{t-1})$.
The conditional-density assumption makes $K_t$ $L$-Lipschitz on
$[q-e,q+e]$ almost surely. Since $\E K_t(q)=1-\alpha$,
\begin{align*}
 |\Pr(V_t\le\qhat)-(1-\alpha)|
 &\le \E|K_t(\qhat)-K_t(\widetilde q_n)|
       +\E|K_t(\widetilde q_n)-K_t(q)|\\
 &\le \Pr(\Delta_n>e)+L\E(\Delta_n\wedge e)\\
 &\le\frac{8\Theta^2}{nc_f^2e^2}
      +\frac{L}{c_f}\left(\frac2n+\frac{2\sqrt2\Theta}{\sqrt n}\right).
\end{align*}
This is \eqref{eq:marginal-bound-smooth} for every such $t$.
Only a truncated cutoff error was integrated; no score moment or
untruncated $L^1$ quantile rate is required.
\end{proof}

\subsection{Local oscillation and the Bahadur representation}
\label{app:general-bahadur}

Under Assumptions~\ref{ass:general}--\ref{ass:quantile}, continuity
and local density positivity give $F(q)=1-\alpha$. Quantile inversion
and \eqref{eq:general-empirical-l2}, applied at $q\pm C/\sqrt n$,
give $\qhat-q=O_{\Pr}(n^{-1/2})$. More explicitly, for each fixed
large $C$ and all sufficiently large $n$, the probability of leaving
this neighborhood is at most $C'\Theta^2/(c_f^2C^2)$; the
$O(n^{-1})$ rank correction is smaller than $c_fC/(2\sqrt n)$.
Taking $C\to\infty$ proves tightness without a score moment condition.

The required random-threshold argument uses
\[
 D_{a,N}(v)=\{F_N^{(a)}(v)-F(v)\}
             -\{F_N^{(a)}(q)-F(q)\},\qquad
 \psi(\delta)=2\sum_{k\ge0}\min\{\sqrt{C_f\delta},\theta_k\}.
\]
For $|v-q|\le\delta\le e$, the increment
$G_t(v)=\ind(V_t\le v)-\ind(V_t\le q)$ has
$\|G_t(v)\|_2\le\sqrt{C_f\delta}$.
Its coupled difference has norm bounded both by
$2\sqrt{C_f\delta}$ and by $2\theta_k$ at lag $k$.
The projection argument of \eqref{eq:general-empirical-l2} therefore
gives $\|D_{a,N}(v)\|_2\le\psi(\delta)/\sqrt N$.
Dominated convergence yields $\psi(\delta)\to0$ as $\delta\downarrow0$.

Partition $[q-\delta,q+\delta]$ into $J$ equal subintervals with
endpoints $v_0,\ldots,v_J$. Monotonicity of both CDFs gives
\[
 \sup_{|v-q|\le\delta}|D_{a,N}(v)|
 \le\max_{0\le j\le J}|D_{a,N}(v_j)|+2C_f\delta/J.
\]
Hence, for every positive $A_N$,
\begin{equation}
 \E\left[A_N^2\sup_{|v-q|\le\delta}|D_{a,N}(v)|^2\right]
 \le2(J+1)\frac{A_N^2}{N}\psi(\delta)^2
      +\frac{8C_f^2(A_N\delta)^2}{J^2}.
 \label{eq:general-local-oscillation}
\end{equation}
If $\delta\to0$, $A_N^2/N$ and $A_N\delta$ are bounded, take
$J\to\infty$ slowly enough that $J\psi(\delta)^2\to0$.
The left side then tends to zero, uniformly in a deterministic block
start. All suprema here are measurable: the empirical CDF has finitely
many jumps and $F$ is continuous, so a countable dense grid suffices.

\begin{proof}[Proof of Theorem~\ref{thm:bahadur}]
Use \eqref{eq:general-local-oscillation} with $N=n$, $A_N=\sqrt n$,
and $\delta=C/\sqrt n$. On $|\qhat-q|\le C/\sqrt n$ it implies
$D_{0,n}(\qhat)=o_{\Pr}(n^{-1/2})$. First take $n\to\infty$,
then $C\to\infty$ using the preceding tightness bound.
No inter-time ties occur almost surely, so the exact rank identity is
$F_n^{(0)}(\qhat)=k_n/n$. Subtracting the empirical CDF at $q$ gives
\[
 F(\qhat)-(1-\alpha)
 =-\{F_n^{(0)}(q)-(1-\alpha)\}
   +\{k_n/n-(1-\alpha)\}-D_{0,n}(\qhat).
\]
The rank correction is $O(n^{-1})$. This proves
\eqref{eq:general-probability-bahadur}; differentiability at $q$,
$f(q)>0$, and $\qhat-q=O_{\Pr}(n^{-1/2})$ prove
\eqref{eq:general-bahadur}.
\end{proof}

\subsection{Martingale approximation and realized coverage}
\label{app:general-clt}

Put $\xi_t=\ind(V_t\le q)-(1-\alpha)$ within the proofs and define
\[
 \mathcal D_t=\sum_{k\ge0}P_t\xi_{t+k}.
\]
This series is absolutely convergent in $L^2$, with
$\|\mathcal D_t\|_2\le\Theta$. The resulting sequence is stationary and
ergodic, since it is a function of the iid innovation shift, and is a
martingale-difference sequence for $(\mathcal F_t)$.
For a block of length $N$, reindexing the projection terms at lag $k$
leaves two boundary sums, each of length $\min(k,N)$. Their terms
are orthogonal, so
\begin{equation}
 \left\|\sum_{t=a+1}^{a+N}(\xi_t-\mathcal D_t)\right\|_2
 \le2\sum_{k\ge1}\sqrt{\min(k,N)}\,\theta_k
 =o(\sqrt N).
 \label{eq:general-martingale-approximation}
\end{equation}
For the last equality, divide by $\sqrt N$ and use dominated
convergence with dominating summable sequence $2\theta_k$.

Orthogonality of martingale projections also gives, for $h\ge0$,
\[
 |\Cov(\xi_0,\xi_h)|
 \le\sum_{j\ge0}\theta_j\theta_{j+h}.
\]
Summing over $h$ proves absolute covariance summability. Thus
$N^{-1}\Var(\sum_{t=1}^N\xi_t)\to\sigma_{\rm cov}^2$.
Comparing this variance with the martingale sum using
\eqref{eq:general-martingale-approximation} shows
$\sigma_{\rm cov}^2=\E(\mathcal D_0)^2$.

\begin{proof}[Proof of Theorem~\ref{thm:realized}]
Set $A_{n,m}=\sqrt{nm/(n+m)}$. For arbitrary $n,m\to\infty$,
$A_{n,m}^2/n\le1$, $A_{n,m}^2/m\le1$, and
$A_{n,m}C/\sqrt n\le C$. Thus
\eqref{eq:general-local-oscillation}, with $N=n,m$,
$A_N=A_{n,m}$, and $\delta=C/\sqrt n$, controls both random-cutoff
terms. The required bounds hold for both block lengths, and
the calibration cutoff lies in this neighborhood with probability
arbitrarily close to one. Subtracting the calibration empirical CDF
from the test empirical CDF at the same cutoff gives
\begin{equation}
 \coverage-(1-\alpha)
 =\frac1m\sum_{t=n+1}^{n+m}\xi_t-\frac1n\sum_{t=1}^n\xi_t
       +o_{\Pr}(A_{n,m}^{-1}).
 \label{eq:general-coverage-expansion}
\end{equation}
The omitted rank correction is $O(n^{-1})=o(A_{n,m}^{-1})$.

For the martingale version of the normalized contrast, use weights
$w_t=-A_{n,m}/n$ on calibration and $w_t=A_{n,m}/m$ on test times.
Their squared sum is one and
$w_*:=\max_t|w_t|\le\max\{n^{-1/2},m^{-1/2}\}\to0$.
Let $Z_t=\E\{(\mathcal D_t)^2\mid\mathcal F_{t-1}\}$.
This is stationary ergodic and integrable. Its calibration average
converges almost surely to $\sigma_{\rm cov}^2$; its test-block average
converges in probability to the same constant by stationarity and the
ergodic theorem. The conditional variance is their convex combination:
\[
 \sum_t w_t^2Z_t
 =\frac{m}{n+m}\frac1n\sum_{t=1}^nZ_t
  +\frac{n}{n+m}\frac1m\sum_{t=n+1}^{n+m}Z_t
 \xrightarrow{\Pr}\sigma_{\rm cov}^2.
\]
The two deterministic weights need not converge.
For every $u>0$, the expectation of the conditional Lindeberg sum is
at most
\[
 \E\left[(\mathcal D_0)^2\ind\{|\mathcal D_0|>u/w_*\}\right]\longrightarrow0.
\]
The martingale central limit theorem \citep{hall1980martingale}
therefore gives the normal limit for $\sum_t w_t\mathcal D_t$.
If $\sigma_{\rm cov}^2=0$, this assertion follows directly in $L^2$.
The two approximation errors in
\eqref{eq:general-martingale-approximation}, multiplied by
$A_{n,m}/n$ and $A_{n,m}/m$, are $o(1)$ in $L^2$.
Indeed $A_{n,m}/\sqrt n\le1$ and $A_{n,m}/\sqrt m\le1$.
Combining this fact with \eqref{eq:general-coverage-expansion} proves
\eqref{eq:general-coverage-clt}. Finite calibration and test blocks
have not been assumed independent.
\end{proof}

\subsection{Block-based variance estimation and studentization}
\label{app:general-variance}

\begin{proof}[Proof of Proposition~\ref{prop:variance}]
Write $\Lambda(k)=\sum_{j\ge k}\theta_j$ and
$\Theta_4=\sum_{k\ge0}\theta_k^{1/2}$.
The tail condition in Proposition~\ref{prop:variance} implies $\Theta_4<\infty$,
since $\theta_k\le\Lambda(k)$ for $k\ge1$.
For the oracle indicators $I_t^o=\ind(V_t\le q)$ let
$S_j=\sum_{t=(j-1)\ell+1}^{j\ell}\{I_t^o-(1-\alpha)\}$ and
$W_j=S_j^2/\ell$.
Denote by $\widehat\sigma_o^2$ the estimator
\eqref{eq:general-batch-variance} with $q$ in place of $\qhat$.
The identity for centering sums of squares gives
\[
 \widehat\sigma_o^2
 =\frac1B\sum_{j=1}^B W_j
   -\ell\left\{\frac1{B\ell}\sum_{t=1}^{B\ell}I_t^o
                    -(1-\alpha)\right\}^2.
\]
The expectation of the last term is at most $\Theta^2/B$ by
\eqref{eq:general-empirical-l2}; it tends to zero. Also
$\E W_j=\Var(\sum_{t=1}^{\ell}\xi_t)/\ell\to\sigma_{\rm cov}^2$.

We next control the variance of the first term. An indicator difference
takes values in $\{-1,0,1\}$, so its $L^4$ norm is the square root of
its $L^2$ norm. Consequently
$\|P_0\xi_k\|_4\le\theta_k^{1/2}$.
The projection moment inequality of
\citet[Theorem~2(i)]{wu2005nonlinear}, at moment order four, yields
\begin{equation}
 \|S_j\|_4\le C\sqrt\ell\,\Theta_4,
 \qquad \|W_j\|_2\le C\Theta_4^2.
 \label{eq:general-batch-fourth}
\end{equation}
All these moments are indicator moments; no moment of $V_t$ is required.

For blocks numbered $1$ and $1+j$, $j\ge2$, replace every innovation
at times at most $\ell$ by an independent copy in the later block.
Write $S_{1+j}^*$ and $W_{1+j}^*$ for the resulting quantities.
They have their original marginal laws and are independent of $W_1$.
Successive single-coordinate replacements give
\[
 \|I_t^o-(I_t^o)^*\|_2
 \le\sum_{s\le\ell}\theta_{t-s}=\Lambda(t-\ell).
\]
For rigor with the infinite past, first replace finitely many coordinates.
The bounds are summable, so these coupled indicators converge in $L^2$;
finite-coordinate approximation of their measurable causal function
identifies the limit with whole-past replacement. Their $L^4$ distance
is the square root of their $L^2$ distance. Thus, for the later block,
\[
 \|S_{1+j}-S_{1+j}^*\|_4
 \le\ell\Lambda((j-1)\ell+1)^{1/2}.
\]
Using $S^2-(S^*)^2=(S-S^*)(S+S^*)$ and
\eqref{eq:general-batch-fourth} gives
\[
 |\Cov(W_1,W_{1+j})|
 \le C\sqrt\ell\,\Lambda((j-1)\ell+1)^{1/2}.
\]
For $j=0,1$, the constant bound from
\eqref{eq:general-batch-fourth} suffices. Since $\Lambda$ is
nonincreasing, disjoint groups of $\ell$ indices give
$\sum_{j\ge1}\Lambda(j\ell+1)^{1/2}
\le\ell^{-1}\sum_{k\ge1}\Lambda(k)^{1/2}$.
Therefore
\[
 \Var\left(\frac1B\sum_{j=1}^B W_j\right)
 \le\frac CB+\frac C{B\sqrt\ell}
                  \sum_{k\ge1}\Lambda(k)^{1/2}\longrightarrow0.
\]
This proves $\widehat\sigma_o^2\to\sigma_{\rm cov}^2$ in probability.

It remains to handle the estimated cutoff. Let
$D_j(v)=D_{(j-1)\ell,\ell}(v)$ and
$\overline D(v)=B^{-1}\sum_{j=1}^B D_j(v)$.
The change in the $j$th block average when replacing $q$ by $\qhat$
is $D_j(\qhat)+F(\qhat)-F(q)$. The last term cancels exactly after
centering the block averages. The reverse triangle inequality for the
Euclidean norm consequently gives
\begin{equation}
 \left|\widehat\sigma_{\rm cov}-\widehat\sigma_o\right|^2
 \le\frac\ell B\sum_{j=1}^B
        \{D_j(\qhat)-\overline D(\qhat)\}^2
 \le\frac\ell B\sum_{j=1}^B D_j(\qhat)^2.
 \label{eq:general-batch-plugin}
\end{equation}
On the event $|\qhat-q|\le C_0/\sqrt n$, bound each summand by its
supremum over this neighborhood. Apply
\eqref{eq:general-local-oscillation} with $N=\ell$,
$A_N=\sqrt\ell$, and $\delta=C_0/\sqrt n$.
Here $A_N^2/N=1$, $A_N\delta=C_0\sqrt{\ell/n}\to0$, and
$\psi(\delta)\to0$. The expectation of the right side of
\eqref{eq:general-batch-plugin}, restricted to this event, tends to
zero by stationarity and the local bound. Markov's inequality, followed
by $C_0\to\infty$ using root-$n$ cutoff tightness, proves that it
is $o_{\Pr}(1)$. Thus the plug-in estimator is consistent.

We finally verify the asserted strict positivity. The tail condition
also implies $\sum_{k\ge1}\Lambda(k)<\infty$. Indeed, eventually
$\Lambda(k)\le\Lambda(k)^{1/2}$. Since
$\|\E(\xi_{t+k}\mid\mathcal F_t)\|_2\le\Lambda(k)$, the series
$H_t=\sum_{k\ge1}\E(\xi_{t+k}\mid\mathcal F_t)$ converges absolutely
in $L^2$. Rearranging these convergent sums yields
$\mathcal D_t=\xi_t+H_t-H_{t-1}$. If
$\sigma_{\rm cov}^2=\E(\mathcal D_0)^2=0$, then
$\xi_t=H_{t-1}-H_t$ almost surely at every integer time.
Put $Z_t=\exp(2\pi\mathrm{i}H_t)$, where $\mathrm{i}^2=-1$.
The integer-valued indicator in $\xi_t$ implies
\[
 Z_t=\exp\{2\pi\mathrm{i}(1-\alpha)\}Z_{t-1}.
\]
In particular $Z_0$ is $\mathcal F_{-r}$-measurable for every
$r\ge1$, since it is a deterministic multiple of $Z_{-r}$.
Triviality of the iid remote-past tail makes $Z_0$ a constant of
modulus one. Stationarity makes that same constant the value of
$Z_{-1}$, contradicting the display because $0<1-\alpha<1$.
Therefore $\sigma_{\rm cov}^2>0$.
\end{proof}

For completeness, Slutsky's theorem now gives the standard normal
limit of the studentized statistic in Section~\ref{sec:general}.
Positive fallback values can be assigned when the estimated variance
is zero; consistency at a positive limit makes such events
asymptotically negligible. Normal continuity at the relevant cutoffs
proves the stated diagnostic size and prediction-interval probability.
Intersecting the interval with $[0,1]$ does not change whether it
contains $\coverage$. Neither scale-estimator independence nor
conditional-on-calibration inference is used.

\paragraph{A fixed-number-of-blocks reference.}
Assume Assumptions~\ref{ass:general}--\ref{ass:quantile} and
$\sigma_{\rm cov}^2>0$. Keep $B\ge2$ fixed, with $\ell\to\infty$,
$B=\lfloor n/\ell\rfloor$, $n=B\ell+o(n)$, and
$m/n\to\kappa\in(0,\infty)$. The block conditions hold, for example,
with $\ell=\lfloor n/B\rfloor$. For the estimator
\eqref{eq:general-batch-variance},
\begin{equation}
 \frac{\coverage-(1-\alpha)}
 {\widehat\sigma_{\rm cov}\sqrt{1/n+1/m}}
 \xrightarrow{d}\sqrt{\frac B{B-1}}\,T_{B-1},
 \qquad T_{B-1}\sim t_{B-1}.
 \label{eq:general-fixed-block-t}
\end{equation}
The tail condition in Proposition~\ref{prop:variance} is one sufficient
condition for the required positivity. Here $\ell/n\to1/B$, giving
a complementary asymptotic regime to that proposition's growing number
of blocks. Equation~\eqref{eq:general-fixed-block-t} motivates the
scaled-$t$ approximation in the main text.

\begin{proof}
Write $\sigma=\sigma_{\rm cov}$ and $A_{n,m}=\sqrt{nm/(n+m)}$
within this proof, and define
\[
 U_{j,n}=\frac1{\sigma\sqrt\ell}
      \sum_{t=(j-1)\ell+1}^{j\ell}\xi_t\quad(1\le j\le B),
 \qquad
 U_{0,n}=\frac1{\sigma\sqrt m}\sum_{t=n+1}^{n+m}\xi_t.
\]
These variables converge jointly to independent standard normals
$(Z_0,Z_1,\ldots,Z_B)$. To see this, apply the martingale
approximation \eqref{eq:general-martingale-approximation} to each
of the finitely many disjoint blocks and use the Cram\'er--Wold device.
For any fixed linear combination, its largest martingale weight tends
to zero, and the conditional variance is a weighted sum of block
averages of $\E\{\mathcal D_t^2\mid\mathcal F_{t-1}\}/\sigma^2$.
Each average converges to one by stationarity and ergodicity.
The $L^2$ Lindeberg argument in the proof of
Theorem~\ref{thm:realized} then gives variance equal to the sum of
the squared combination coefficients. This proves the asserted joint
normal limit and independence.

The incomplete calibration segment has length $n-B\ell=o(n)$.
Its centered indicator sum is $o_{L^2}(\sqrt n)$ by
\eqref{eq:general-empirical-l2}. Writing
$\overline Z=B^{-1}\sum_{j=1}^B Z_j$, the coverage expansion
\eqref{eq:general-coverage-expansion} therefore gives, jointly with
these block variables,
\[
 \frac{A_{n,m}}\sigma\{\coverage-(1-\alpha)\}
 \xrightarrow{d}
 N_\kappa:=\frac{Z_0-\sqrt{\kappa B}\,\overline Z}
                    {\sqrt{1+\kappa}}.
\]

It remains to check the cutoff in the variance estimator. Write
$D_j(v)=D_{(j-1)\ell,\ell}(v)$. The local
bound \eqref{eq:general-local-oscillation}, with $N=\ell$,
$A_N=\sqrt\ell$, and $\delta=C/\sqrt n$, applies because
$A_N^2/N=1$ and $A_N\delta=C\sqrt{\ell/n}$ is bounded.
After localizing the cutoff as before, it gives
$\sqrt\ell D_j(\qhat)=o_{\Pr}(1)$ simultaneously for the fixed
number of blocks. The common shift $F(\qhat)-F(q)$ cancels after
block centering. Hence, jointly with the displayed numerator,
\[
 \frac{\widehat\sigma_{\rm cov}^{\,2}}{\sigma^2}
 \xrightarrow{d}\frac1B\sum_{j=1}^B(Z_j-\overline Z)^2
 =:\frac VB.
\]
Here $V\sim\chi^2_{B-1}$ is independent of $\overline Z$ and $Z_0$.
Thus $N_\kappa\sim N(0,1)$ is independent of $V$. Since $V>0$
almost surely, the continuous mapping theorem gives
$N_\kappa/\sqrt{V/B}=\sqrt{B/(B-1)}\,T_{B-1}$, proving
\eqref{eq:general-fixed-block-t}. This is an asymptotic reference;
the finite-block statistic is not assumed Gaussian or exactly Student-$t$.
\end{proof}

%% file: app_long_memory.tex
\section{Proofs for the Gaussian linear model}
\label{app:long-memory}

This appendix proves Theorems~\ref{thm:lm-bahadur} and~\ref{thm:lm-coverage}
beyond the summable indicator-FDM conditions of Section~\ref{sec:general}.
Lemma~\ref{lem:lm-scale} first establishes the fluctuation scales and
marginal coverage bound; the following calculation explains why the
general FDM assumptions do not apply.
Appendix~\ref{app:lm-bahadur} derives the Bahadur representation and
the calibration--test coverage contrast.
Appendix~\ref{app:lm-laws} uses this contrast to obtain the three
limiting distributions.

Constants in this appendix may depend on the fixed model and on
$\alpha$, but not on the length or deterministic starting point of a
block. The notation $r_N$ means the rate in \eqref{eq:lm-rate}
evaluated at a generic block length $N$.
Set $\gamma(h)=\Cov(W_0,W_h)$ and
$\rho(h)=\gamma(h)/\sigma_W^2$. For a block starting after $a$, write
$F_N^{(a)}(v)=N^{-1}\sum_{t=a+1}^{a+N}\ind\{V_t\le v\}$.
All assertions involving the conformal rank are for sufficiently large
sample sizes, so $k_n\le n$ and
$0\le k_n/n-(1-\alpha)<2/n$.

\begin{lemma}[Square-sum scales and marginal coverage]
\label{lem:lm-scale}
Under Assumption~\ref{ass:lm}, define, for this appendix,
$s_{2,N}^2=\Var\{\sum_{t=1}^N(W_t^2-\sigma_W^2)\}$.
Then
\begin{align}
s_{2,N}^2
&=2\left\{N\sigma_W^4+2\sum_{h=1}^{N-1}(N-h)\gamma(h)^2\right\},
\notag\\
s_{2,N}^2&\sim
\begin{cases}
\displaystyle\frac{4C_\gamma^2}{(3-4\beta)(4-4\beta)}N^{4-4\beta},
 &\beta<3/4,\\
4C_\gamma^2N\log N,&\beta=3/4,\\
\displaystyle 2N\sum_{h\in\mathbb Z}\gamma(h)^2,&\beta>3/4.
\end{cases}
\label{eq:lm-square-scales}
\end{align}
In particular, $s_{2,N}/N\asymp r_N$.
Furthermore $\qhat-q=O_{\Pr}(r_n)$,
$\E|\qhat-q|=O(r_n)$, and
\begin{equation}
\sup_{t>n}|\Pr(V_t\le\qhat)-(1-\alpha)|=O(r_n).
\label{eq:lm-marginal-rate}
\end{equation}
\end{lemma}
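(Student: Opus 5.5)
The proof splits into three pieces: the exact variance formula, the three-regime asymptotics, and the conformal-cutoff consequences.

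\emph{Variance formula.} Since $(W_t)$ is a centered stationary Gaussian process, Isserlis' identity gives $\Cov(W_s^2,W_t^2)=2\gamma(t-s)^2$. Summing over $1\le s,t\le N$ and grouping by lag $h=|t-s|$ produces the displayed exact expression for $s_{2,N}^2$.

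\emph{Three regimes.} From Assumption~\ref{ass:lm} we have $\gamma(h)\sim C_\gamma h^{1-2\beta}$, so $\gamma(h)^2\sim C_\gamma^2h^{2-4\beta}$.
\begin{itemize}
\item If $\beta>3/4$, then $\gamma^2$ is summable. Dominated convergence applied to $N^{-1}\sum_{h<N}(N-h)\gamma(h)^2$ gives the limit $2N\sum_{h\in\mathbb Z}\gamma(h)^2$ (the $h=0$ term accounts for $\sigma_W^4$).
\item If $\beta=3/4$, then $\gamma(h)^2\sim C_\gamma^2/h$. Hence $\sum_{h<N}(N-h)\gamma(h)^2\sim C_\gamma^2N\log N$, because the $-h$ part contributes only $O(N)$. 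Multiplying by four gives $4C_\gamma^2N\log N$.
\item If $\beta<3/4$, a Riemann-sum comparison gives $\sum_{h<N}(N-h)h^{2-4\beta}\sim N^{4-4\beta}\int_0^1(1-x)x^{2-4\beta}dx=N^{4-4\beta}/\{(3-4\beta)(4-4\beta)\}$. The diagonal term $N\sigma_W^4$ is of smaller order.
\end{itemize}
The relation $s_{2,N}/N\asymp r_N$ is then read off from \eqref{eq:lm-rate}.

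\emph{Indicator fluctuation at a fixed threshold.} For fixed $v$, expand $\ind\{|W_t|\le v\}$ in Hermite polynomials of $W_t/\sigma_W$. The function is even, so the rank-one term vanishes. The rank-two coefficient is $\E[\ind\{|W|\le v\}(W^2/\sigma_W^2-1)]=-2(v/\sigma_W)\phi(v/\sigma_W)$, which is nonzero for $v>0$. The Hermite-rank-$\ge4$ remainder has covariances bounded by $C|\rho(h)|^4$, uniformly in $v$. Its sum is therefore $O(N)$ when $\beta>5/8$ and $O(N^{4-8\beta}\cdot N^2)=o(N^{4-4\beta})$ otherwise; in all regimes it is $O(N^2r_N^2)$. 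This yields
\[
\Bigl\|\sum_{t=a+1}^{a+N}\{\ind(V_t\le v)-F(v)\}\Bigr\|_2\le C N r_N,
\]
uniformly in $v$ and in the block start $a$.

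\emph{Cutoff rate.} Repeat the inversion argument of Theorem~\ref{thm:marginal}(ii). Take $u=Cr_n$ and thresholds $q\pm u$. Because $V_t=|W_t|$ has a positive continuous density near $q$, Chebyshev's inequality bounds $\Pr(|\qhat-q|>u)$ by a constant times $r_n^2/(c_fu-2/n)^2$. This gives $\qhat-q=O_{\Pr}(r_n)$. Integrating the tail bound exactly as in part (ii) gives $\E(|\qhat-q|\wedge e)=O(r_n)$. For the untruncated expectation, use $\qhat\le\max_{t\le n}V_t$. Since $\Pr(|\qhat-q|>e)=O(r_n^2)$ and $\|\max_{t\le n}V_t\|_2=O(\sqrt{\log n})$, Cauchy--Schwarz bounds the remaining piece by $O(r_n\sqrt{\log n})$. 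This is not quite $O(r_n)$. To close the gap, I would sharpen the far tail using higher-moment (fourth-order Hermite) bounds, which give $\Pr(|\qhat-q|>e)=O(r_n^4)$.

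\emph{Marginal coverage.} For $t>n$, write $\Pr(V_t\le\qhat)-(1-\alpha)$ by sandwiching, as in part (i), between $F(q\pm u)$ and the probabilities of the cutoff leaving $[q-u,q+u]$. This gives a bound $C_fu+O(r_n^2/u^2)$. Choosing $u$ of order $r_n$ only gives $O(r_n^{2/3})$, which is too weak. The better route uses Gaussian conditioning as in part (ii). Given $\mathcal F_{t-1}$, $W_t$ is Gaussian with variance $a_0^2$, so $V_t$ has a conditional density bounded by $L=2/(|a_0|\sqrt{2\pi})$ everywhere. The cutoff $\qhat$ is $\mathcal F_{t-1}$-measurable. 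Hence
\[
|\Pr(V_t\le\qhat)-(1-\alpha)|\le L\,\E|\qhat-q|=O(r_n),
\]
uniformly in $t>n$. This also avoids truncation: $K_t$ is globally Lipschitz here, so one only needs $\E|\qhat-q|=O(r_n)$.

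\emph{Main obstacle.} The hardest step is establishing that untruncated expectation bound, $\E|\qhat-q|=O(r_n)$. The Hermite-rank-two reduction with its uniform remainder is the technical core.
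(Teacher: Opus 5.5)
Your variance identity, the three regime asymptotics, the uniform fixed-threshold bound, the truncated cutoff rate, and the conditional-density step for \eqref{eq:lm-marginal-rate} all follow the paper's proof. The paper gets the uniform indicator bound in one line from $0\le\Cov\{A(W_0/\sigma_W),A(W_h/\sigma_W)\}\le\|A(Z)\|_2^2\rho(h)^2$ for centered even $A$; your rank-two/remainder split reproduces that bound.

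The genuine gap is the untruncated bound $\E|\qhat-q|=O(r_n)$. It is part of the statement, and your final inequality $|\Pr(V_t\le\qhat)-(1-\alpha)|\le L\,\E|\qhat-q|$ relies on it. As written you only reach $O(r_n\sqrt{\log n})$. Your proposed repair, a fourth-order Hermite bound giving $\Pr(|\qhat-q|>e)=O(r_n^4)$, is not justified. The indicator has an infinite Hermite expansion, so hypercontractivity does not apply directly. A fourth-moment bound for its partial sums under long memory also does not follow from the covariance estimates you have, and you give no argument for it.

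No such machinery is needed. The paper controls the far tail by counting. For large $n$, $\qhat-q>u$ forces at least $\alpha n/2$ calibration scores above $q+u$, and $\qhat-q<-u$ forces at least $(1-\alpha)n$ scores below $q-u$. Markov's inequality on these counts gives $\Pr(|\qhat-q|>u)\le C_\alpha\Pr(|V_0-q|\ge u)\le C_\alpha\E(V_0-q)^2/u^2$. Combine this with the monotone bound $\Pr(|\qhat-q|>u)\le Cr_n^2$ for $u>e_0$. Integrating $\min\{Cr_n^2,C/u^2\}$ over $(e_0,\infty)$, split at $u\asymp r_n^{-1}$, then yields $O(r_n)$.

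Your own maximum argument also closes if you use H\"older with an exponent $p>2$ instead of Cauchy--Schwarz. Gaussian scores give $\|\max_{t\le n}V_t\|_p=O(\sqrt{\log n})$. Hence $\E[|\qhat-q|\ind\{|\qhat-q|>e\}]\le C_p\sqrt{\log n}\,r_n^{2-2/p}=o(r_n)$, because $r_n$ decays polynomially.

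Finally, \eqref{eq:lm-marginal-rate} by itself does not need the untruncated rate. The clipping argument of Theorem~\ref{thm:marginal}(ii) bounds the coverage error by $\Pr(|\qhat-q|>e)+L\,\E(|\qhat-q|\wedge e)=O(r_n)$.
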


\begin{proof}
Square summability of $(a_j)$ gives a stationary Gaussian linear
process. The coefficient bound $|a_j|\le C(1+j)^{-\beta}$ implies
\begin{equation}
\gamma(h)\sim C_\gamma h^{1-2\beta},\qquad
\sum_{j\ge0}|a_ja_{j+h}|\le C(1+h)^{1-2\beta}.
\label{eq:lm-covariance-envelope}
\end{equation}
For the first equivalence, rescale the convolution by $h^{1-2\beta}$.
On any compact subinterval of $(0,\infty)$ the resulting Riemann sum
converges to $c^2x^{-\beta}(1+x)^{-\beta}$; the coefficient bound
controls the discarded neighborhoods of zero and infinity by integrable
power functions. Initial coefficients contribute only $O(h^{-\beta})$.
The Gaussian identity
$\Cov(W_0^2,W_h^2)=2\gamma(h)^2$ now gives
\eqref{eq:lm-square-scales} by summing the covariances. The critical
sum is harmonic; the central sum converges absolutely.

We record a useful covariance bound. If $A$ is a centered even
square-integrable function of a standard normal variable, its Hermite
expansion gives
\begin{equation}
\begin{aligned}
&\Cov\{A(W_0/\sigma_W),A(W_h/\sigma_W)\}
=\sum_{j\ge2,\ j\ {\rm even}}
\frac{\{\E A(Z)H_j(Z)\}^2}{j!}\rho(h)^j,
\\
&0\le\Cov\{A(W_0/\sigma_W),A(W_h/\sigma_W)\}
\le\|A(Z)\|_2^2\rho(h)^2.
\end{aligned}
\label{eq:lm-even-covariance}
\end{equation}
Here $Z\sim N(0,1)$ and $\E H_j(Z)^2=j!$.
The formula follows first for finite Hermite expansions and then by
$L^2$ approximation. It remains valid for negative $\rho(h)$ because
only even orders occur. Both the centered absolute score and the
centered indicator at $q$ have a nonzero second Hermite coefficient.
In fact, with $z=q/\sigma_W$ and $\phi$ the standard normal density,
the leading covariances are
\[
\Cov(V_0,V_h)\sim\frac{\sigma_W^2}{\pi}\rho(h)^2,
\qquad
\Cov\{\ind(V_0\le q),\ind(V_h\le q)\}
\sim2z^2\phi(z)^2\rho(h)^2.
\]
This proves the covariance-memory distinction stated in the main text.
Applied to threshold indicators uniformly in their threshold,
\eqref{eq:lm-even-covariance} also proves
\begin{equation}
\sup_v\Var\{F_n^{(0)}(v)\}\le C r_n^2.
\label{eq:lm-cdf-variance}
\end{equation}

The absolute Gaussian marginal density is smooth and positive near
$q>0$. Write $\Delta_n=\qhat-q$ and fix a sufficiently small $e_0>0$.
Order-statistic inequalities, \eqref{eq:lm-cdf-variance}, and the rank
correction give
$\Pr(|\Delta_n|>u)\le C r_n^2/u^2$ for
$Kr_n\le u\le e_0$, with a fixed $K$.
This proves tightness at scale $r_n$ and contributes $O(r_n)$ to the
integral of the tail over $[0,e_0]$. For $u>e_0$, monotonicity bounds
the tail by $Cr_n^2$. Moreover, the event $\Delta_n>u$ requires at
least $\alpha n/2$ scores above $q+u$ for large $n$, and
$\Delta_n<-u$ requires at least $(1-\alpha)n$ scores below $q-u$.
Markov's inequality on these counts gives
\[
\Pr(|\Delta_n|>u)
\le C_\alpha\Pr(|V_0-q|\ge u)
\le C_\alpha\E(V_0-q)^2/u^2.
\]
Integrating the minimum of this bound and $Cr_n^2$ over
$(e_0,\infty)$ gives $O(r_n)$, hence the asserted $L^1$ rate.

Conditionally on innovations through time $t-1$, $W_t$ is normal
with variance $a_0^2$. Thus the conditional density of $|W_t|$ is
bounded by $2/(|a_0|\sqrt{2\pi})$.
For $t>n$, the cutoff is measurable with respect to this past, so
\[
|\Pr(V_t\le\qhat)-F(q)|
\le\frac{2}{|a_0|\sqrt{2\pi}}\E|\qhat-q|.
\]
This proves \eqref{eq:lm-marginal-rate}, uniformly over deterministic
future indices. The same conditional density shows that for $s<t$,
$\Pr(|W_t|=|W_s|)=0$; consequently there are no inter-time score ties.
\end{proof}

For clarity, summable covariances in the central regime do not imply
the indicator-FDM assumption of Section~\ref{sec:general}.
To verify this for the present model, write
$W_k=U_k+a_k\varepsilon_0$ and
$W_k^*=U_k+a_k\varepsilon_0'$, where $U_k$ is independent of both
innovations and has variance $\sigma_W^2-a_k^2\ge a_0^2$ for $k\ge1$.
Conditioning on the two innovations and bounding the density of $U_k$
gives, uniformly over thresholds $v$,
\[
\Pr\!\left(\ind\{|W_k|\le v\}\ne
\ind\{|W_k^*|\le v\}\right)\le C|a_k|.
\]
For a matching lower bound at $v=q$, restrict
$\varepsilon_0\in[1,2]$ and $\varepsilon_0'\in[-2,-1]$.
For large $k$, $a_k>0$ is small and the two translated intervals have
a symmetric difference of length at least $2a_k$ near $q$.
The Gaussian density of $U_k$ is uniformly bounded below there, so
the disagreement probability is at least $c_0a_k$ for a fixed $c_0>0$.
Thus $\theta_k\asymp |a_k|^{1/2}\asymp k^{-\beta/2}$ and
$\Theta=\infty$ throughout $1/2<\beta<1$.
The Gaussian result in Theorem~\ref{thm:lm-coverage}(i) therefore uses
the predictive-projection argument below, rather than the assumptions
of Theorem~\ref{thm:realized}.

\subsection{Random-cutoff substitution}
\label{app:lm-bahadur}

\begin{proof}[Proof of Theorem~\ref{thm:lm-bahadur}]
Let
\[
D_{a,N}(v)=F_N^{(a)}(v)-F(v)-F_N^{(a)}(q)+F(q).
\]
For $|v-q|\le\delta$ in a fixed neighborhood of $q$, the summand is
a centered even function with variance at most $C\delta$.
Equation~\eqref{eq:lm-even-covariance} therefore gives
$\E D_{a,N}(v)^2\le C\delta r_N^2$.
Put a grid of $J$ equal subintervals on $[q-\delta,q+\delta]$.
Monotonicity of the empirical CDF and local boundedness of $f$ bound
the supremum by the maximum over grid endpoints plus $C\delta/J$.
Bounding a squared maximum by the sum of squares gives
\begin{equation}
\E\sup_{|v-q|\le\delta}|D_{a,N}(v)|^2
\le C(J+1)\delta r_N^2+C\delta^2/J^2.
\label{eq:lm-local-grid}
\end{equation}
Take $\delta=C_0r_n$ for fixed $C_0$, and
$J=\lceil r_n^{-1/2}\rceil$. For calibration $(a,N)=(0,n)$ and
for the adjacent test block $(a,N)=(n,m)$ with $m/n\to\kappa$,
$r_N/r_n$ is bounded. The right side is $o(r_n^2)$.
By Lemma~\ref{lem:lm-scale}, localizing to
$|\qhat-q|\le C_0r_n$ and then letting $C_0\to\infty$ yields
\begin{equation}
D_{0,n}(\qhat)=o_{\Pr}(r_n),\qquad
D_{n,m}(\qhat)=o_{\Pr}(r_n).
\label{eq:lm-random-cutoff}
\end{equation}
For proof-local notation put
$\xi_t=\ind(V_t\le q)-(1-\alpha)$.
The exact identity $F_n^{(0)}(\qhat)=k_n/n$, absence of ties, and
$n^{-1}=o(r_n)$ now give
\begin{equation}
F(\qhat)-(1-\alpha)
=-\frac1n\sum_{t=1}^n\xi_t+o_{\Pr}(r_n).
\label{eq:lm-probability-bahadur}
\end{equation}
Taylor expansion at $q$, with $f(q)>0$, proves
\eqref{eq:lm-bahadur}. The same calculation on the test block gives
the additional representation used below:
\begin{equation}
\coverage-(1-\alpha)
=\frac1m\sum_{t=n+1}^{n+m}\xi_t
-\frac1n\sum_{t=1}^n\xi_t+o_{\Pr}(r_n).
\label{eq:lm-coverage-contrast}
\end{equation}
Only marginal and local empirical bounds were used in
\eqref{eq:lm-random-cutoff}; the random cutoff and test block need
not be independent.
\end{proof}

\subsection{The three limiting distributions}
\label{app:lm-laws}

\begin{proof}[Proof of Theorem~\ref{thm:lm-coverage}]
Write $z=q/\sigma_W>0$. For the centered indicator $\xi_t$,
integration by parts gives the second Hermite coefficient
\[
\E\{\ind(|Z|\le z)H_2(Z)\}=-2z\phi(z).
\]
It is nonzero and the odd coefficients vanish. Consequently
\begin{equation}
\xi_t=-\frac{qf(q)}{2\sigma_W^2}(W_t^2-\sigma_W^2)+A_t,
\label{eq:lm-rank-two}
\end{equation}
where $A_t$ has only even Hermite orders at least four.
Orthogonality and \eqref{eq:lm-covariance-envelope} imply
\begin{equation}
\left\|\sum_{t=1}^N A_t\right\|_2^2
\le C\left\{N+2\sum_{h=1}^{N-1}(N-h)|\rho(h)|^4\right\}
=o(s_{2,N}^2),\qquad \beta\le3/4.
\label{eq:lm-higher-order-remainder}
\end{equation}
Indeed the bracket is $O(N^{6-8\beta})$ for $\beta<5/8$,
$O(N\log N)$ at $\beta=5/8$, and $O(N)$ above it.
Each is negligible relative to \eqref{eq:lm-square-scales} in the
indicated range. Combining \eqref{eq:lm-rank-two} with
\eqref{eq:lm-coverage-contrast} gives
\begin{equation}
\coverage-(1-\alpha)
=\frac{qf(q)}{2\sigma_W^2}
\left\{\frac1n\sum_{t=1}^n(W_t^2-\sigma_W^2)
-\frac1m\sum_{t=n+1}^{n+m}(W_t^2-\sigma_W^2)\right\}
+o_{\Pr}(r_n),\quad \beta\le3/4.
\label{eq:lm-square-contrast}
\end{equation}

If $1/2<\beta<3/4$, the noncentral limit belongs to the classical
Gaussian Hermite theory of \citet{dobrushin1979noncentral,taqqu1975weak}.
The Gaussian rank-two functional limit in
\citet[Eq.~(1.6) and Theorem~1.1]{dehling1989empirical}, applied to
the absolute-score indicator, gives the Rosenblatt limit. Its
covariance hypothesis holds with $D=2\beta-1<1/2$ and slowly varying
$L(h)=h^{2\beta-1}\rho(h)\to C_\gamma/\sigma_W^2>0$;
the source only requires $L$ to be positive eventually.
Using \eqref{eq:lm-rank-two}--\eqref{eq:lm-higher-order-remainder}
and the source's variance-one normalization, the equivalent
adjacent square-sum limit is
\[
\frac1{s_{2,n}}
\left(\sum_{t=1}^n(W_t^2-\sigma_W^2),
\sum_{t=n+1}^{n+m}(W_t^2-\sigma_W^2)\right)
\xrightarrow{d}
\big(R_H(1),R_H(1+\kappa)-R_H(1)\big).
\]
To allow $m/n\to\kappa$, apply the functional theorem on a fixed
compact time interval containing the endpoints; continuity of the
limit permits the varying endpoint. Equations
\eqref{eq:lm-square-scales} and \eqref{eq:lm-square-contrast}
give \eqref{eq:lm-noncentral-law}.

At $\beta=3/4$ the strict-regime source theorem does not apply.
Write $T_N=\sum_{t=1}^N(W_t^2-\sigma_W^2)$.
Stationarity and \eqref{eq:lm-square-scales} give
\[
2\Cov(T_n,T_{n+m}-T_n)=s_{2,n+m}^2-s_{2,n}^2-s_{2,m}^2
=o(n\log n).
\]
The two normalized component variances tend to
$4C_\gamma^2$ and $4C_\gamma^2\kappa$.
For joint normality, every Cram\'er--Wold combination is a quadratic
form $\sum_{t=1}^{n+m}w_{n,t}(W_t^2-\sigma_W^2)$ with
$\max_t|w_{n,t}|=O\{(n\log n)^{-1/2}\}$.
Let $\Gamma_N=(\gamma(i-j))_{i,j\le N}$, $N=n+m$.
The row-sum bound gives $\|\Gamma_N\|_{\rm op}=O(\sqrt N)$.
Hence the eigenvalues $\lambda_{n,j}$ of
$\Gamma_N^{1/2}\operatorname{diag}(w_{n,t})\Gamma_N^{1/2}$ satisfy
$\max_j|\lambda_{n,j}|=O\{(\log n)^{-1/2}\}$.
Diagonalization expresses the form as
$\sum_j\lambda_{n,j}(Z_j^2-1)$ with independent standard normal $Z_j$.
Its variance converges, while
$\sum_j\lambda_{n,j}^4\le
(\max_j\lambda_{n,j}^2)\sum_j\lambda_{n,j}^2\to0$.
Lyapunov's theorem proves convergence to a normal variable for
positive limiting variance; a zero-variance combination converges
in $L^2$ to zero. This proves the required joint Gaussian limit and,
through \eqref{eq:lm-square-contrast}, \eqref{eq:lm-critical-law}.
Only a two-window joint limit, not boundary functional tightness, is
asserted or needed.

Finally suppose $\beta>3/4$.
The rank-two Gaussian limit is consistent with the classical theorem
of \citet{breuer1983central}; we give a predictive-projection argument
that also supplies the coupling used for block sampling.
The full covariance sum of $\xi_t$ is absolutely convergent by
\eqref{eq:lm-even-covariance}, and all its terms are nonnegative.
Thus $\sigma_{\rm cov}^2\ge\alpha(1-\alpha)>0$.
Lemma~\ref{lem:lm-projections} proves
$\sum_{k\ge0}d_k<\infty$, where
$d_k=\|\mathcal P_0\xi_k\|_2$. This is a predictive-projection
condition, distinct from summable indicator FDM.
To make the martingale approximation explicit, put
$D_i=\sum_{j\ge0}\mathcal P_i\xi_{i+j}$, converging in $L^2$.
Then $(D_i)$ is a stationary ergodic square-integrable martingale
difference sequence. Comparing the two boundary terms at each lag
and using orthogonality gives
\[
\frac1{\sqrt N}\left\|
\sum_{t=1}^N\xi_t-\sum_{i=1}^ND_i\right\|_2
\le2\sum_{j\ge0}d_j\min\{\sqrt{j/N},1\}\longrightarrow0.
\]
Dominated convergence proves the limit. The ergodic martingale
central limit theorem gives joint Brownian increments for the two
adjacent sums: ergodicity supplies convergence of the conditional
variance averages, and $\E[D_0^2\ind(|D_0|>\epsilon\sqrt n)]\to0$
supplies Lindeberg's condition. This is the standard
predictive-dependence route; see \citet[Theorem~3(i)]{wu2005nonlinear}.
The $L^2$ approximation and the convergent covariance sum identify
$\E D_0^2=\sigma_{\rm cov}^2$.
Applying \eqref{eq:lm-coverage-contrast} proves
\eqref{eq:lm-central-law}. Higher even orders are not discarded.
These covariance and projection arguments allow arbitrary finitely
many initial coefficients in Assumption~\ref{ass:lm}; their signs need
not be restricted to apply the source results.
\end{proof}

The random-cutoff remainders above are $o_{\Pr}(r_n)$.
We do not infer convergence of the actual normalized coverage variance
from these distributional statements.

%% file: app_blocks.tex
\section{Block sampling: procedure and proof}
\label{app:lm-blocks}

This appendix specifies the block sampling method and proves its validity
with an estimated memory parameter under Theorem~\ref{thm:blocks}'s
conditions. Appendix~\ref{app:lm-procedure} gives the paired-block
procedure and its use for inference, building on
\citet{hall1998sampling,zhang2013block,betken2018subsampling}.
Appendix~\ref{app:lm-exponent} justifies the estimated normalization;
Appendix~\ref{app:lm-block-dependence} separately controls dependence
between coverage blocks. Appendix~\ref{app:lm-block-proof} combines
these ingredients to prove Theorem~\ref{thm:blocks} and justify inference
using the estimated quantiles.

\subsection{Block-sampling procedure}
\label{app:lm-procedure}

\paragraph{Paired calibration and test blocks.}
Choose $b=\lfloor n^\zeta\rfloor$, $0<\zeta<1$, a memory-estimation
block length $h=\lfloor\sqrt n\rfloor$, and
$\ell_b=\lfloor bm/n\rfloor$. For
$k_b=\lceil(b+1)(1-\alpha)\rceil$, take $n$ large enough that
$k_b\le b$, $\ell_b\ge1$, and $\max\{b+\ell_b,2h\}\le n$.
At each start $i=0,\ldots,n-b-\ell_b$, calibrate on
$V_{i+1},\ldots,V_{i+b}$ using rank $k_b$, then evaluate the cutoff
$\widehat q_{i,b}$ on the next $\ell_b$ scores. The block's realized
coverage is
$\widehat{\mathrm{Cov}}_{i,b}=\ell_b^{-1}
\sum_{t=i+b+1}^{i+b+\ell_b}\ind\{V_t\le\widehat q_{i,b}\}$.
All pairs lie within the calibration set and match the target
calibration--test length ratio.

\paragraph{Estimated normalization.}
We estimate memory from the signed residuals $W_t$, whose mean is zero
under Assumption~\ref{ass:lm}, while using $V_t=|W_t|$ for calibration
and evaluation. The two-scale block-variance method of
\citet{zhang2013block}, applied at lengths $l=h,2h$, gives
\begin{equation}
\widehat Q_l=\frac1{n-l+1}\sum_{i=0}^{n-l}
\left(\sum_{t=i+1}^{i+l}W_t\right)^{\!2},
\quad
\widehat\beta=\frac32-
\frac{\log(\widehat Q_{2h}/\widehat Q_h)}{2\log2}.
\label{eq:lm-beta-estimator}
\end{equation}
Use the same normalization formula throughout $1/2<\beta<1$:
\begin{equation}
R_N(u)=\left\{\frac{1+\int_1^N x^{2-4u}\,dx}{N}\right\}^{1/2},
\qquad \widehat r_N=R_N(\widehat\beta),\quad N=b,n.
\label{eq:lm-auto-rate}
\end{equation}
If either estimated block variance is zero, set $\widehat\beta=3/4$.
This convention is asymptotically irrelevant.
Lemma~\ref{lem:lm-exponent} proves that $\widehat r_N/R_N(\beta)\to1$
in probability and that $R_N(\beta)$ is asymptotically proportional
to $r_N$.

\paragraph{Distribution estimation and inference.}
The normalized block CDF is
\begin{equation}
\widehat G_{n,b}(x)=\frac1{n-b-\ell_b+1}
\sum_{i=0}^{n-b-\ell_b}
\ind\!\left\{\frac{\widehat{\mathrm{Cov}}_{i,b}-(1-\alpha)}{\widehat r_b}\le x\right\}.
\label{eq:lm-block-cdf}
\end{equation}
It includes the unknown constants in Theorem~\ref{thm:lm-coverage},
so these need not be estimated separately.
Let $\widehat c_u$ be its $u$-quantile. An asymptotic two-sided
level-$\eta$ test rejects when
$\{\coverage-(1-\alpha)\}/\widehat r_n$ lies outside
$[\widehat c_{\eta/2},\widehat c_{1-\eta/2}]$.
Multiplying these endpoints by $\widehat r_n$ and adding $1-\alpha$
gives a prediction interval for future realized coverage.
The probabilities are over the joint calibration and test sets;
Appendix~\ref{app:lm-block-proof} verifies these claims.

\subsection{Accuracy of the memory estimate and normalization}
\label{app:lm-exponent}

We establish the stronger accuracy
$\widehat\beta-\beta=o_{\Pr}(1/\log n)$, which justifies extrapolating
the normalization from blocks to the full window in all three regimes.

\begin{lemma}[Memory estimation and normalization]
\label{lem:lm-exponent}
Under Assumption~\ref{ass:lm} and the refinement
$a_j=j^{-\beta}\{c+O(j^{-\varphi})\}$ for some $\varphi>0$, the estimator
\eqref{eq:lm-beta-estimator} with $h=\lfloor\sqrt n\rfloor$
satisfies, for any
$0<\delta<\min(\varphi,1-\beta)$,
\begin{equation}
\widehat\beta-\beta
=O_{\Pr}(h^{-\delta}+v_{n,h}),\qquad
v_{n,h}=
\begin{cases}
(h/n)^{2\beta-1},&\beta<3/4,\\
\sqrt{(h/n)\{1+\log(n/h)\}},&\beta=3/4,\\
\sqrt{h/n},&\beta>3/4.
\end{cases}
\label{eq:lm-exponent-rate}
\end{equation}
In particular $(\widehat\beta-\beta)\log n=o_{\Pr}(1)$ and
$\widehat r_N/R_N(\beta)\to1$ in probability for $N=b,n$.
For each fixed $\beta\in(1/2,1)$, $R_N(\beta)/r_N$ converges
to a strictly positive finite constant.
\end{lemma}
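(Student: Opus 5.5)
The plan is to analyse the block-variance statistics $\widehat Q_l$ directly, through the partial-sum variance $\sigma_l^2=\Var(\sum_{t=1}^lW_t)=\sum_{|k|<l}(l-|k|)\gamma(k)$. The log-ratio $\log(\widehat Q_{2h}/\widehat Q_h)$ then splits into a deterministic bias part $\log(\sigma_{2h}^2/\sigma_h^2)$ and a stochastic part $\log(\widehat Q_{2h}/\sigma_{2h}^2)-\log(\widehat Q_h/\sigma_h^2)$.

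\emph{Bias.} Under the refinement $a_j=j^{-\beta}\{c+O(j^{-\varphi})\}$, I would sharpen the Riemann-sum argument behind \eqref{eq:lm-covariance-envelope} to a second-order expansion
\[
\gamma(k)=C_\gamma k^{1-2\beta}\{1+O(k^{-\delta})\},\qquad 0<\delta<\min(\varphi,1-\beta).
\]
Two sources produce the error: the $O(j^{-\varphi})$ perturbation of the coefficients, and the discretisation error of the convolution against $x^{-\beta}(1+x)^{-\beta}$. The latter costs $O(k^{\beta-1})$ near $x=0$, which is where the restriction $\delta<1-\beta$ comes from. Summing twice gives
\[
\sigma_l^2=\frac{2C_\gamma}{(2-2\beta)(3-2\beta)}\,l^{3-2\beta}\{1+O(l^{-\delta})\}.
\]
The lower-order linear term $l\sum_k\gamma(k)$-type contribution is $O(l^{1}\cdot l^{2-2\beta})$ times a relative factor of order $l^{-(1-\beta)}$ at worst, so it is absorbed. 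Hence $\log(\sigma_{2h}^2/\sigma_h^2)=(3-2\beta)\log2+O(h^{-\delta})$. After division by $2\log2$ this yields $\beta$ plus an $O(h^{-\delta})$ error in \eqref{eq:lm-beta-estimator}.

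\emph{Stochastic part.} It suffices to show
\[
\Var(\widehat Q_l)/\sigma_l^4=O(v_{n,h}^2),\qquad l=h,2h.
\]
Write $S_i=\sum_{t=i+1}^{i+l}W_t$. Since $W$ is Gaussian, Isserlis' formula gives $\Cov(S_i^2,S_j^2)=2\Cov(S_i,S_j)^2$, so
\[
\Var(\widehat Q_l)\le\frac{C}{n}\sum_{|d|<n}\Cov(S_0,S_d)^2.
\]
For $|d|\le 2l$, bound $\Cov(S_0,S_d)$ by $\sigma_l^2$. For $|d|>2l$, use $\Cov(S_0,S_d)\asymp l^2\gamma(d)\asymp l^2d^{1-2\beta}$. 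Summing,
\[
\frac{\Var(\widehat Q_l)}{\sigma_l^4}\lesssim\frac ln+\frac{l^{4}}{n\,l^{6-4\beta}}\sum_{l<d<n}d^{2-4\beta}.
\]
This equals $(l/n)^{4\beta-2}$ when $\beta<3/4$, $(l/n)\{1+\log(n/l)\}$ when $\beta=3/4$, and $l/n$ when $\beta>3/4$, which is exactly $v_{n,h}^2$. Chebyshev's inequality and the delta method for $\log$ then give the stochastic part as $O_{\Pr}(v_{n,h})$. This establishes \eqref{eq:lm-exponent-rate}.

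\emph{Consequences.} With $h=\lfloor\sqrt n\rfloor$, both $h^{-\delta}$ and $v_{n,h}$ are polynomially small in $n$, so $(\widehat\beta-\beta)\log n=o_{\Pr}(1)$. For the normalization I would compute $R_N(u)^2=N^{-1}\{1+(N^{3-4u}-1)/(3-4u)\}$, with the value $N^{-1}(1+\log N)$ at $u=3/4$. The limit of $R_N(\beta)/r_N$ is then read off in each regime:
\[
\frac{R_N(\beta)}{r_N}\to
\begin{cases}
(4\beta-3)^{-1/2},&\beta>3/4,\\
1,&\beta=3/4,\\
(3-4\beta)^{-1/2},&\beta<3/4.
\end{cases}
\]

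\emph{Main obstacle.} The hard step is the ratio $\widehat r_N/R_N(\beta)\to1$ when the estimate crosses the critical value. I would use the mean value theorem in $u$: the derivative of $\log R_N(u)^2$ equals $-4\int_1^Nx^{2-4u}\log x\,dx\big/\bigl(1+\int_1^N x^{2-4u}dx\bigr)$. On a neighbourhood of $\beta$ of width $O(1/\log N)$ this is bounded in absolute value by $4\log N$, uniformly, including when the neighbourhood crosses $3/4$. Multiplying by $|\widehat\beta-\beta|=o_{\Pr}(1/\log n)$, and using $N\le n$, gives a log-ratio that is $o_{\Pr}(1)$. The bias expansion of $\gamma(k)$ is the most delicate computation, so I would carry it out first.
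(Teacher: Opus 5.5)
Your proposal is correct and follows essentially the same route as the paper. The shared steps are the refined expansion $\gamma(k)=C_\gamma k^{1-2\beta}\{1+O(k^{-\delta})\}$, the Gaussian identity $\Cov(S_{i,l}^2,S_{j,l}^2)=2\Cov(S_{i,l},S_{j,l})^2$ with Cauchy--Schwarz at lags up to $2l$ and the covariance envelope beyond, Chebyshev followed by a logarithm, and the mean-value bound on $\log R_N$. That last bound, $|\partial_u\log R_N(u)^2|\le4\log N$, holds for every real $u$, so no neighbourhood restriction is needed.

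There is one arithmetic slip, which does not affect the lemma because it only asserts a strictly positive finite limit. For $\beta>3/4$ you dropped the leading $1$ in $NR_N(\beta)^2=1+(N^{3-4\beta}-1)/(3-4\beta)\to1+(4\beta-3)^{-1}$. The limit of $R_N(\beta)/r_N$ is therefore $\sqrt{1+(4\beta-3)^{-1}}$, not $(4\beta-3)^{-1/2}$.
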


\begin{proof}
This applies the two-scale method of \citet[Section~2.5]{zhang2013block}
to the rank-one transform $K(w)=w$, whose self-similarity index is
$3/2-\beta$ throughout $1/2<\beta<1$. We prove the rate directly
for the observed blocks in \eqref{eq:lm-beta-estimator}.

First the coefficient refinement yields
\begin{equation}
\gamma(k)=C_\gamma k^{1-2\beta}\{1+O(k^{-\delta})\},
\qquad 0<\delta<\min(\varphi,1-\beta).
\label{eq:lm-covariance-refinement}
\end{equation}
For the leading coefficients, the sum--integral error in
$\sum_{j\ge1}j^{-\beta}(j+k)^{-\beta}$ is $O(k^{-\beta})$.
Writing the coefficient error as $O(j^{-\beta-\varphi})$,
and splitting its convolution at $j=k$, bounds the remaining error
by $O(k^{1-2\beta-\varphi}+k^{-\beta})$, with an additional
logarithm if $\beta+\varphi=1$.
The strict choice of $\delta$ absorbs this logarithm and all fixed
initial coefficients. Let $S_{i,l}=\sum_{t=i+1}^{i+l}W_t$ and
$s_{1,l}^2=\Var(S_{0,l})$. Summing the covariances gives
\begin{equation}
s_{1,l}^2=K_\beta l^{3-2\beta}\{1+O(l^{-\delta})\},
\qquad K_\beta=\frac{2C_\gamma}{(2-2\beta)(3-2\beta)}.
\label{eq:lm-scale-refinement}
\end{equation}
Indeed the leading sum--integral error is $O(l)$, and the covariance
remainder contributes $O(l^{3-2\beta-\delta})$ because
$\delta<1-\beta<2-2\beta$.

For a lag $k>2l$, the covariance envelope in
\eqref{eq:lm-covariance-envelope} gives
$|\Cov(S_{0,l},S_{k,l})|\le Cl^2k^{1-2\beta}$.
The exact Gaussian covariance-of-squares identity therefore yields
\[
\frac{\Cov(S_{0,l}^2,S_{k,l}^2)}{s_{1,l}^4}
=2\left\{\frac{\Cov(S_{0,l},S_{k,l})}{s_{1,l}^2}\right\}^{\!2}
\le C(l/k)^{4\beta-2}.
\]
For smaller lags the normalized covariance is at most two by
Cauchy--Schwarz. Hence
$A_{n,l}=(n-l+1)^{-1}\sum_{i=0}^{n-l}S_{i,l}^2/s_{1,l}^2$
has expectation one and, for $l=h,2h$,
\begin{equation}
\Var(A_{n,l})\le C
\begin{cases}
(l/n)^{4\beta-2},&\beta<3/4,\\
(l/n)\{1+\log(n/l)\},&\beta=3/4,\\
l/n,&\beta>3/4.
\end{cases}
\label{eq:lm-block-second-moment-rate}
\end{equation}
All blocks in this calculation are observed; no pre-sample blocks
are introduced. Since $\widehat Q_l/s_{1,l}^2=A_{n,l}$,
Chebyshev's inequality gives
$\widehat Q_l/s_{1,l}^2=1+O_{\Pr}(v_{n,h})$ at both lengths.
Combining with \eqref{eq:lm-scale-refinement} gives
$\widehat Q_{2h}/\widehat Q_h
=2^{3-2\beta}\{1+O_{\Pr}(h^{-\delta}+v_{n,h})\}$.
Taking logarithms proves \eqref{eq:lm-exponent-rate}.
Both estimated block variances are positive with probability tending
to one, so the fallback does not affect the conclusion.
For $h=\lfloor\sqrt n\rfloor$, both
$h^{-\delta}\log n$ and $v_{n,h}\log n$ tend to zero.

For the normalization in \eqref{eq:lm-auto-rate}, write
$J_N(u)=NR_N(u)^2$. The integral has the closed form
\[
J_N(u)=
\begin{cases}
1+\{N^{3-4u}-1\}/(3-4u),&u\ne3/4,\\
1+\log N,&u=3/4.
\end{cases}
\]
Near $u=3/4$, evaluate the numerator as
$\operatorname{expm1}\{(3-4u)\log N\}$ to avoid cancellation,
using the continuous limit when $3-4u=0$.
For every real $u$, differentiation under the finite integral gives
\[
|\partial_u\log R_N(u)|
=\frac{2\int_1^N(\log x)x^{2-4u}\,dx}
       {1+\int_1^N x^{2-4u}\,dx}
\le2\log N.
\]
The mean-value theorem and $(\widehat\beta-\beta)\log n=o_{\Pr}(1)$ imply
$\widehat r_N/R_N(\beta)\to1$ for $N=b,n$.
Finally, direct evaluation gives
\begin{equation}
\frac{R_N(\beta)}{r_N}\longrightarrow d_\beta:=
\begin{cases}
\sqrt{1+(4\beta-3)^{-1}},&\beta>3/4,\\
1,&\beta=3/4,\\
(3-4\beta)^{-1/2},&\beta<3/4.
\end{cases}
\label{eq:lm-normalization-constant}
\end{equation}
In particular the unknown constant cancels from the block-to-full
scale ratio:
$\{\widehat r_n/\widehat r_b\}/(r_n/r_b)\to1$ in probability.
These conclusions are pointwise in the fixed model, not uniform over
sequences of memory parameters approaching $3/4$.
\end{proof}

\subsection{Dependence between block statistics}
\label{app:lm-block-dependence}

The memory-estimation lemma controls the normalization, but the
overlapping coverage blocks are themselves dependent.
We show that replacing distant past innovations by independent copies
has a vanishing effect on their normalized statistics. These bounds
will make the variance of the block CDF vanish in
Appendix~\ref{app:lm-block-proof}. They concern coverage statistics,
not estimation of $\beta$.

\paragraph{Quadratic sums for $\beta\le3/4$.}
In these regimes, the leading term of the coverage indicator is
quadratic, so it suffices to control centered-square sums.

Replace all innovations at times at most zero by independent copies,
keeping the innovations at positive times unchanged. A star denotes
the resulting process. For a separation lag $d\ge b$ and
$d<t\le d+b$, write
$W_t=P_t+Q_t$, where $P_t$ contains innovations at times at most zero
and $Q_t$ contains the remaining innovations. Then
$W_t^*=P_t^*+Q_t$, and the Gaussian vectors $P,P^*,Q$ are independent.
Equation~\eqref{eq:lm-covariance-envelope} and the squared coefficient
tail give
\[
|\Cov(P_t,P_s)|\le Cd^{1-2\beta},\qquad
|\Cov(Q_t,Q_s)|\le C(1+|t-s|)^{1-2\beta}.
\]
For $B_{d,b}=\sum_{t=d+1}^{d+b}(W_t^2-\sigma_W^2)$,
the copied difference is
\[
B_{d,b}-B_{d,b}^*
=2\sum_{t=d+1}^{d+b}Q_t(P_t-P_t^*)
+\sum_{t=d+1}^{d+b}\{P_t^2-(P_t^*)^2\}.
\]
Independence and the Gaussian square-covariance identity bound the
squared norms of these two terms by
$Cd^{1-2\beta}b^{3-2\beta}$ and $Cb^2d^{2-4\beta}$, respectively.
Thus
\begin{equation}
\|B_{d,b}-B_{d,b}^*\|_2
\le C\{d^{1/2-\beta}b^{3/2-\beta}+bd^{1-2\beta}\}.
\label{eq:lm-square-copy}
\end{equation}
In the noncentral regime, division by $s_{2,b}\asymp b^{2-2\beta}$
gives $C(b/d)^{\beta-1/2}$.
At the critical boundary, division by $\sqrt{b\log b}$ gives
\begin{equation}
C\frac{(b/d)^{1/4}+(b/d)^{1/2}}{\sqrt{\log b}}.
\label{eq:lm-critical-copy}
\end{equation}

\paragraph{Indicator sums for $\beta>3/4$.}
In this regime, higher Hermite terms also contribute to the limit,
so the full coverage indicator must be controlled.
\label{app:lm-projections}

\begin{lemma}[Predictive projections and copied indicator blocks]
\label{lem:lm-projections}
Let $\xi_t=\ind(|W_t|\le q)-(1-\alpha)$ and
$\mathcal F_j=\sigma(\varepsilon_s:s\le j)$.
For $\mathcal P_j=\E(\cdot\mid\mathcal F_j)
-\E(\cdot\mid\mathcal F_{j-1})$,
\begin{equation}
d_k:=\|\mathcal P_0\xi_k\|_2
\le C|a_k|\left(\sum_{j\ge k}a_j^2\right)^{1/2},\qquad k\ge1.
\label{eq:lm-projection-bound}
\end{equation}
If $\beta>3/4$, then $\sum_{k\ge0}d_k<\infty$.
In this central regime, for whole-past replacement at time zero,
there is a deterministic nonincreasing $e(d)\to0$ such that
\begin{equation}
\left\|\sum_{t=d+1}^{d+b}(\xi_t-\xi_t^*)\right\|_2
\le\sqrt b\,e(d),\qquad d,b\ge1.
\label{eq:lm-central-copy}
\end{equation}
\end{lemma}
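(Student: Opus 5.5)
To bound $d_k$ for $k\ge1$, I will compute $\E(\xi_k\mid\mathcal F_j)$ explicitly. Write $W_k=\sum_{i<k-j}a_i\varepsilon_{k-i}+\mu_{k,j}$, where $\mu_{k,j}=\sum_{i\ge k-j}a_i\varepsilon_{k-i}$ is $\mathcal F_j$-measurable and the first sum is independent $N(0,\tau_{k-j}^2)$ with $\tau_r^2=\sum_{i<r}a_i^2\ge a_0^2$. Then
\[
\E(\xi_k\mid\mathcal F_j)=g_{k-j}(\mu_{k,j}),\qquad g_r(\mu)=\Phi\{(q-\mu)/\tau_r\}-\Phi\{(-q-\mu)/\tau_r\}-(1-\alpha).
\]
The key point is that $g_r$ is even in $\mu$, smooth, and its derivative $g_r'(\mu)=-\tau_r^{-1}[\phi\{(q-\mu)/\tau_r\}-\phi\{(q+\mu)/\tau_r\}]$ vanishes at $\mu=0$ and satisfies $|g_r'(\mu)|\le C|\mu|$ uniformly in $r\ge1$, since $\tau_r$ is bounded above and below.

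For $j=0$, $\mathcal P_0\xi_k=g_k(\mu_{k,0})-g_{k+1}(\mu_{k,-1})$ with $\mu_{k,0}=a_k\varepsilon_0+\mu_{k,-1}$. Using $\E(\cdot\mid\mathcal F_{-1})$, I write
\[
\mathcal P_0\xi_k=g_k(a_k\varepsilon_0+\mu)-\E_{\varepsilon_0'}g_k(a_k\varepsilon_0'+\mu),\qquad \mu=\mu_{k,-1},
\]
which follows from the tower identity $g_{k+1}(\mu)=\E' g_k(a_k\varepsilon_0'+\mu)$. Its $L^2$ norm is at most $\|g_k(a_k\varepsilon_0+\mu)-g_k(a_k\varepsilon_0'+\mu)\|_2$. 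By the mean-value theorem this is bounded by $|a_k||\varepsilon_0-\varepsilon_0'|\sup|g_k'|$ on the segment, and the segment's points have magnitude at most $|\mu|+|a_k|(|\varepsilon_0|+|\varepsilon_0'|)$. Hence
\[
d_k\le C|a_k|\bigl(\|\mu\|_4+|a_k|\bigr),\qquad \|\mu\|_4\le C\Bigl(\sum_{j>k}a_j^2\Bigr)^{1/2}
\]
by Gaussianity. The bound $|a_k|\le C(\sum_{j\ge k}a_j^2)^{1/2}$ absorbs the second term, giving \eqref{eq:lm-projection-bound}. Since $\sum_{j\ge k}a_j^2\asymp k^{1-2\beta}$, we get $d_k=O(k^{-\beta+1/2-\beta})=O(k^{1/2-2\beta})$, which is summable exactly when $\beta>3/4$.

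For \eqref{eq:lm-central-copy}, I use the martingale decomposition $\xi_t-\xi_t^*=\sum_{j\le0}(\mathcal P_j\xi_t-\mathcal P_j^*\xi_t^*)+(\E\xi_t\mid\ldots)$. Simpler is to note that $\sum_{t=d+1}^{d+b}\xi_t=\sum_{j\le d+b}\mathcal P_j(\cdot)$. The terms with $j\ge1$ agree exactly for the starred and unstarred processes, because whole-past replacement preserves joint laws and the positive-time innovations are shared. I would make this precise by expressing $\mathcal P_j\xi_t=h_{t-j}(\varepsilon_j,\text{past}\le j-1,\ldots)$ as an explicit difference depending on the past only through $\mu_{t,j-1}$, which itself depends on innovations at times $\le0$.

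Rather than that, I bound directly. Put $U_t=\E(\xi_t\mid\mathcal F_0)$, so that $\xi_t-U_t$ is a function of positive-time innovations together with $\mu_{t,0}$. This gets messy, so my concrete plan is the following. Write
\[
\sum_t(\xi_t-\xi_t^*)=\sum_{j\le d+b}\bigl[\mathcal P_j S-\mathcal P_j^*S^*\bigr],\qquad S=\sum_{t=d+1}^{d+b}\xi_t.
\]
The terms with $j\le0$ have $L^2$ norm at most $2\|\E(S\mid\mathcal F_0)\|_2\le 2\sum_{t}\sum_{k\ge t}d_k\le 2b\sum_{k>d}d_k$, which is too crude. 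Instead, by orthogonality in $j$, I bound it by $2\bigl(\sum_{j\le0}(\sum_t d_{t-j})^2\bigr)^{1/2}$, and Cauchy--Schwarz gives $2\sqrt b\,(\sum_{k>d}d_k)^{1/2}(\sum_k d_k)^{1/2}$.

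For $j\ge1$, $\mathcal P_jS-\mathcal P_j^*S^*$ involves the difference of $g$-type increments at $\mu$ versus $\mu^*$, where $\mu-\mu^*$ involves only coefficients $a_i$ with $i\ge t$. I expect a bound of the form $\|\mathcal P_j\xi_t-\mathcal P_j^*\xi_t^*\|_2\le\min\{2d_{t-j},\,C|a_{t-j}|(\sum_{i\ge t}a_i^2)^{1/2}\}$ via a second-derivative bound on $g$. Summing, orthogonality across $j$ and Cauchy--Schwarz yield $\sqrt b\,e(d)$, with $e(d)\to0$ by dominated convergence, using $\sum_k d_k<\infty$ and $\sum_{i\ge d}a_i^2\to0$. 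I expect this $j\ge1$ cross term to be the main obstacle.
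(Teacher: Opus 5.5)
\paragraph{Verdict.} Your proposal is correct. The bound on $d_k$ follows the paper; the copied-block bound takes a different route.

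\paragraph{The bound on $d_k$.} This is the paper's argument. You use the even conditional-expectation function $g_k$ with $g_k'(0)=0$, the tower identity, Jensen's inequality, and the mean-value theorem. Independence of the remote part $\mu$ from $(\varepsilon_0,\varepsilon_0')$ finishes it.

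\paragraph{The copied-block bound: the paper's route.} The paper works in the enlarged filtration, with coordinates $(\varepsilon_j,\varepsilon_j')$ at $j\le0$ and $\varepsilon_j$ at $j>0$. It never computes the $j\ge1$ projections. Instead it bounds every lag-$k$ projection of $D_t=\xi_t-\xi_t^*$ by $\min\{2d_k,\|D_t\|_2\}$, using contraction of the projection. It then gets $\|D_t\|_2\le Cd^{(1-2\beta)/4}$ from a direct indicator-flip estimate: the two intervals for $Q_t$ have symmetric difference of length at most $2|P_t-P_t^*|$, and $Q_t$ has bounded density. Minkowski's inequality over lags and orthogonality over $t$ finish the proof.

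\paragraph{Your route.} You split at $j=0$. The $j\le0$ part involves only lags exceeding $d$, so it is controlled by $\sum_{k>d}d_k$. This holds because the original and copied projections there are independent, mean zero, and equal in law. For $j\ge1$ you use $g_k''$.

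The step you call the main obstacle closes directly. Put $k=t-j$ and $h(x)=g_k(x+\mu)-g_k(x+\mu^*)$. Then $\mathcal P_j\xi_t-\mathcal P_j^*\xi_t^*=h(a_k\varepsilon_j)-\E'h(a_k\varepsilon_j')$ and $|h'|\le\sup|g_k''|\,|\mu-\mu^*|$. The uniform bound on $g_k''$ holds because the integrated-out variance is at least $a_0^2$. The difference $\mu-\mu^*=\sum_{i\ge t}a_i(\varepsilon_{t-i}-\varepsilon_{t-i}')$ is independent of $(\varepsilon_j,\varepsilon_j')$, which gives your bound $C|a_k|(\sum_{i\ge t}a_i^2)^{1/2}$.

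With $c_k=\min\{2d_k,C|a_k|(\sum_{i>d}a_i^2)^{1/2}\}$, orthogonality in $j$ and the convolution bound $\sum_{j\ge1}(\sum_t c_{t-j})^2\le b(\sum_k c_k)^2$ yield a nonincreasing $e(d)\to0$ by dominated convergence.

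\paragraph{What each buys.} Your route gives a faster explicit decay of $e(d)$, though only $e(d)\to0$ is needed. The paper's route is shorter: it needs only the bounded density of $Q_t$ and projection contraction, with no second-derivative computation for the positive-time projections.

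\paragraph{Two fixes for the write-up.} First, delete the intermediate claim that the $j\ge1$ terms agree exactly for the starred and unstarred processes. It is false: $\mathcal P_j\xi_t$ depends on $\mu_{t,j-1}$, which contains innovations at times $\le0$. Your final plan correctly treats these terms as nonzero. Second, state that orthogonality across $j$ of $\mathcal P_jS-\mathcal P_j^*S^*$ is taken in the joint filtration of original and copied innovations, which is exactly the paper's enlarged filtration. Within a single filtration $\mathcal F_j$ or $\mathcal F_j^*$, the mixed cross terms are not controlled.
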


\begin{proof}
For $k\ge1$, conditioning on $\mathcal F_0$ integrates out a
Gaussian variable of variance $\sum_{j<k}a_j^2\ge a_0^2$.
The resulting function
$g_k(x)=\Pr(|x+N(0,\sum_{j<k}a_j^2)|\le q)-(1-\alpha)$
is even with $g_k'(0)=0$ and uniformly bounded second derivative.
Put $R_k=\sum_{j>k}a_j\varepsilon_{k-j}$.
Conditional centering in $\varepsilon_0$ and Jensen's inequality bound
$d_k$ by
$\|g_k(R_k+a_k\varepsilon_0)-g_k(R_k+a_k\varepsilon_0')\|_2$.
The mean-value theorem bounds this by
\[
C|a_k|\left\|
|\varepsilon_0-\varepsilon_0'|
\{|R_k|+|a_k|(|\varepsilon_0|+|\varepsilon_0'|)\}
\right\|_2
\le C|a_k|\left(\sum_{j\ge k}a_j^2\right)^{1/2}.
\]
Thus $d_k\le Ck^{1/2-2\beta}$ eventually, proving summability in
the central regime. Finitely many zero or negative coefficients do
not affect the bound.

Let $D_t=\xi_t-\xi_t^*$. In the decomposition $W_t=P_t+Q_t$,
$Q_t$ is independent of $P_t,P_t^*$ and has variance at least $a_0^2$.
The symmetric difference of the two intervals for $Q_t$ has total
length at most $2|P_t-P_t^*|$. Its bounded density therefore gives
\begin{equation}
\|D_t\|_2^2\le C\E|P_t-P_t^*|
\le C\left(\sum_{j\ge t}a_j^2\right)^{1/2},
\qquad
\sup_{t\ge d}\|D_t\|_2\le C d^{(1-2\beta)/4}.
\label{eq:lm-copy-smallness}
\end{equation}
Use the enlarged independent-coordinate filtration with coordinate
$(\varepsilon_j,\varepsilon_j')$ at each $j\le0$ and coordinate
$\varepsilon_j$ at each $j>0$; both past copies enter at the same
time index. Denote its projections by $\mathcal P'_j$.
Each component of $D_t$ has projection norm $d_k$ at lag $k$,
because unused innovations are independent. Orthogonal projection
is also a contraction, so for $t\ge d$,
\[
\|\mathcal P'_{t-k}D_t\|_2
\le\min\{2d_k,Cd^{(1-2\beta)/4}\}.
\]
The remote past of this independent-coordinate filtration is trivial and
$\E D_t=0$, hence $D_t=\sum_{k\ge0}\mathcal P'_{t-k}D_t$ in $L^2$.
For fixed $k$, the projections indexed by $t$ are orthogonal.
Minkowski's inequality consequently gives
\[
\frac1{\sqrt b}\left\|\sum_{t=d+1}^{d+b}D_t\right\|_2
\le\sum_{k\ge0}\min\{2d_k,Cd^{(1-2\beta)/4}\}=:e(d).
\]
Dominated convergence using $\sum_k d_k<\infty$ proves $e(d)\to0$.
This uses predictive projections, not summability of the
single-innovation indicator FDM from the general theory.
\end{proof}

\subsection{Proof of distribution consistency}
\label{app:lm-block-proof}

\begin{proof}[Proof of Theorem~\ref{thm:blocks}]
By \eqref{eq:lm-normalization-constant}, the limit defining $G$ is
the corresponding random variable in Theorem~\ref{thm:lm-coverage}
divided by $d_\beta$.
The CDF $G$ is continuous and nondegenerate in all three regimes.
This is immediate for the two Gaussian limits and their positive
variances. In the noncentral regime, the covariance formula
$\Cov\{R_H(s),R_H(t)\}
=(s^{2H}+t^{2H}-|t-s|^{2H})/2$
shows that the adjacent Rosenblatt contrast has variance
\[
v_H(\kappa)
=\frac{1+\kappa}{\kappa}
\{1+\kappa^{2H-1}-(1+\kappa)^{2H-1}\}>0,
\]
by strict concavity of $x^{2H-1}$.
It is a nonzero second-chaos random variable. Diagonalizing its
Hilbert--Schmidt kernel writes it as an $L^2$-convergent sum
$\sum_j\lambda_j(Z_j^2-1)$. Separate a nonzero term from the
independent remaining sum: the first has no atoms, so their
convolution has no atoms either. This proves continuity of $G$.

Let $L_b=b+\ell_b$, $N_b=n-L_b+1\asymp n$ and, for this proof,
\[
T_{i,b}=\frac1{R_b(\beta)}\left\{
\frac1{\ell_b}\sum_{t=i+b+1}^{i+L_b}\xi_t
-\frac1b\sum_{t=i+1}^{i+b}\xi_t\right\},
\qquad
Y_{i,b}=\frac{\widehat{\mathrm{Cov}}_{i,b}-(1-\alpha)}{R_b(\beta)}.
\]
The joint fixed-threshold limits established in
Appendix~\ref{app:lm-laws}, applied at lengths $b,\ell_b$, imply
$T_{0,b}\Rightarrow G$, since $R_b(\beta)/r_b\to d_\beta$.
Equation~\eqref{eq:lm-coverage-contrast}
at these same lengths gives
\begin{equation}
Y_{0,b}-T_{0,b}=o_{\Pr}(1).
\label{eq:lm-block-cutoff-remainder}
\end{equation}
Stationarity gives the same statements at every deterministic start.
The exact conformal rank $k_b$ is used in each block.

Consider two paired windows with starting points separated by
$d>2L_b$. In the later pair replace every innovation through the end
of the earlier pair. The resulting $T_{d,b}^*$ is independent of
$T_{0,b}$ and has the same marginal law as $T_{d,b}$.
In the noncentral and critical regimes,
\eqref{eq:lm-rank-two}--\eqref{eq:lm-higher-order-remainder}
reduce each indicator sum to its quadratic component with an
$o_{L^2}(br_b)$ remainder. Both subwindow lengths are comparable to
$b$. Applying \eqref{eq:lm-square-copy} or
\eqref{eq:lm-critical-copy} to the two components therefore gives
\begin{equation}
\|T_{d,b}-T_{d,b}^*\|_2
\le e_b+C(b/d)^{\beta-1/2},\qquad e_b\to0,
\quad \beta\le3/4.
\label{eq:lm-oracle-copy-noncentral}
\end{equation}
At the boundary the sharper quadratic term has the additional
factor $1/\sqrt{\log b}$; the displayed weaker bound suffices.
The copied reduction remainder has its original marginal norm.
In the central regime Lemma~\ref{lem:lm-projections}, applied
separately to the two subwindows and shifted to the copying time,
instead gives
\begin{equation}
\|T_{d,b}-T_{d,b}^*\|_2\le C_\kappa e(b),
\qquad e(b)\to0,
\quad \beta>3/4.
\label{eq:lm-oracle-copy-central}
\end{equation}
Thus all three cases have vanishing average copied discrepancy
over well-separated paired windows.

For $I_{i,b}(x)=\ind\{T_{i,b}\le x\}$ and fixed $\epsilon>0$,
copying, independence, and Markov's inequality give
\[
|\Cov(I_{0,b}(x),I_{d,b}(x))|
\le\Pr(|T_{0,b}-x|\le\epsilon)
+\epsilon^{-1}\|T_{d,b}-T_{d,b}^*\|_2.
\]
The $O(b)$ close lags have covariance bounded by one.
For $\beta\le3/4$, averaging
\eqref{eq:lm-oracle-copy-noncentral} over the other lags gives
$e_b+C(b/n)^{\beta-1/2}\to0$; for $\beta>3/4$ use
\eqref{eq:lm-oracle-copy-central}.
Consequently the variance of
$\widetilde G_{n,b}(x)=N_b^{-1}\sum_i I_{i,b}(x)$ has limit superior
at most $2\{G(x+\epsilon)-G(x-\epsilon)\}$.
Letting $\epsilon\downarrow0$ makes it zero, and its expectation
converges to $G(x)$. Pointwise convergence in probability follows.
A finite grid with two tail points and monotonicity of CDFs upgrades
this to uniform convergence over $x\in\mathbb R$.

It remains to replace the oracle contrasts by actual recalibrated
coverages. For each fixed $\epsilon>0$, stationarity and
\eqref{eq:lm-block-cutoff-remainder} imply
\[
\E\left[\frac1{N_b}\sum_i
\ind\{|Y_{i,b}-T_{i,b}|>\epsilon\}\right]
=\Pr(|Y_{0,b}-T_{0,b}|>\epsilon)\longrightarrow0.
\]
Markov's inequality makes the fraction of such blocks negligible.
Sandwiching their empirical CDF between
$\widetilde G_{n,b}(x-\epsilon)$ and
$\widetilde G_{n,b}(x+\epsilon)$, up to this fraction, proves uniform
convergence for the $Y_{i,b}$. This does not require a maximum-over-blocks
remainder bound or an $L^2$ random-cutoff remainder.
Lemma~\ref{lem:lm-exponent} gives
$\widehat r_b/R_b(\beta)\to1$ in all three regimes.
A common random rescaling preserves uniform CDF convergence by
compact-set uniform continuity and tail control.
No independence of the memory estimate and the blocks is needed.
This proves the displayed conclusion of Theorem~\ref{thm:blocks}.
\end{proof}

For the test and interval in Appendix~\ref{app:lm-procedure},
$(\coverage-(1-\alpha))/\widehat r_n\Rightarrow G$, by
Theorem~\ref{thm:lm-coverage} and Lemma~\ref{lem:lm-exponent}.
Uniform CDF convergence places an estimated $u$-quantile between
$G^{-1}(u-\epsilon)$ and $G^{-1}(u+\epsilon)$ with probability tending
to one. Since $G$ is continuous, these deterministic brackets give
asymptotic rejection probability $\eta$ for the two-sided diagnostic,
and coverage probability $1-\eta$ for the future-coverage interval,
by letting $\epsilon\downarrow0$.
This argument also permits flat portions of $G$ and dependent estimated
endpoints. Its probability is over the whole calibration--test
trajectory. The stated null includes the residual-process assumptions
and calibration procedure; a restriction on the mean coverage alone
does not imply the reference law.

%% file: app_non_gaussian.tex
\section{Scope of a non-Gaussian extension}
\label{app:non-gaussian}

This appendix examines what would be needed to extend the Gaussian
results to non-Gaussian innovations; it does not establish an extension.
We first examine the indicator's power rank and a symmetric example,
then discuss empirical-process tools, covariance calculations, and memory
estimation, and finally identify the missing random-cutoff and
critical-boundary arguments. The block results of \citet{zhang2013block}
allow non-Gaussian innovations, but their fixed-transform conclusions
alone do not cover the recalibrated statistics studied here.

For $W_t=\sum_{j\ge0}a_j\varepsilon_{t-j}$ with a smooth marginal
density $f_W$, the first two derivatives of the smoothed absolute
indicator are
\[
\begin{aligned}
\left.\frac{d}{dh}\Pr(|W_0+h|\le q)\right|_{h=0}
&=f_W(-q)-f_W(q),\\
\left.\frac{d^2}{dh^2}\Pr(|W_0+h|\le q)\right|_{h=0}
&=f_W'(q)-f_W'(-q).
\end{aligned}
\]
Symmetric innovations make $W_t$ symmetric and cancel the first
derivative for every nearby cutoff. If $f_W'(q)\ne0$, the absolute
indicator has power rank two. Symmetry alone only implies rank at
least two. Without symmetry the first derivative is generally
nonzero, and a rank-one normalization replaces the quadratic
scale used in Section~\ref{sec:long-memory}.

A concrete class with suitable fixed-transform regularity uses iid
innovations $\varepsilon_t=S_tZ_t/\sqrt{\E S_t^2}$, where $Z_t$ are
standard normal, $S_t$ takes two distinct positive values, and the
two sequences are independent. All moments exist and the innovation
density and its derivatives are smooth and bounded. Conditional on
the scale variables, $W_t$ is centered Gaussian with variance bounded
above and away from zero. Its marginal density is consequently
symmetric and satisfies $f_W'(q)<0$ for every $q>0$.
This verifies the rank-two requirement locally as well as the moment
and smoothing conditions for the fixed-transform block results.

The empirical-process expansion in \citet[Eq.~(4), Theorems~2--3]{wu2003empirical}
suggests a route to the two strict regimes. It requires moment and
smoothness assumptions on an appropriate innovation convolution,
including weighted integrability of its density derivatives.
Subtracting the expansions at the positive and negative endpoints
cancels the first-order term under symmetry. A local reduced-process
bound is also available in \citet[Lemma~15(i), Eq.~(57)]{wu2005bahadur}
under its moment, density, and neighborhood-size conditions.
These tools must be applied to both endpoints and checked uniformly
near the cutoff; a theorem about quantiles of $W_t$ cannot simply be
applied to the nonlinear score $|W_t|$.

The Gaussian covariance identity also changes. For centered iid,
variance-one innovations with finite fourth moment,
\[
\Cov(W_0^2,W_h^2)
=2\Cov(W_0,W_h)^2
+\operatorname{cum}_4(\varepsilon_0)\sum_{j\ge0}a_j^2a_{j+h}^2.
\]
The additional covariance sequence is absolutely summable, since
$\sum_j a_j^2<\infty$, and contributes $O(n)$ to a square partial-sum
variance. It is negligible relative to $n^{4-4\beta}$ for
$\beta<3/4$, but may change central variances. In the noncentral
rank-two reduction
the indicator coefficient is $f_W'(q)$ rather than the
Gaussian-specific $-qf(q)/(2\sigma_W^2)$. Actual recalibrated coverage
blocks would include that coefficient automatically; a separate
density-derivative estimator is not intrinsic to the method.

For the all-moment mixture class above, Condition~1 with $\nu=4$ of
\citet{zhang2013block} holds for the linear transform $K(w)=w$.
The coefficient refinement
$a_j=j^{-\beta}\{c+O(j^{-\varphi})\}$, $\varphi>0$, supplies their
Condition~2; it is not a consequence of the innovation distribution.
Their Theorem~3 and Corollary~1 then support estimation of the memory
parameter from signed residuals throughout $1/2<\beta<1$.
This step concerns the linear residuals, not the rank-two coverage
statistic. The Gaussian block-moment calculation in
Lemma~\ref{lem:lm-exponent} cannot simply be imported unchanged;
the cited non-Gaussian moment conditions provide the relevant
alternative. Neither memory estimation nor a consistent scale alone
establishes a limiting law for actual recalibrated block coverages.

Completing a non-Gaussian extension still requires a local
random-cutoff argument and its transfer to actual block coverages.
The Gaussian even-Hermite covariance bound used in
\eqref{eq:lm-local-grid} is not available unchanged. The non-Gaussian
critical boundary also needs a separate proof: the cited strict-regime
theorems do not include it, and the Gaussian quadratic-form argument
above does not transfer directly. Thus this discussion identifies a
concrete extension route, not an established extension of
Theorems~\ref{thm:lm-bahadur}--\ref{thm:blocks}.

%% file: experiments_appendix.tex
\section{Additional simulation details}
\label{app:simulation-details}

This appendix provides the assumption checks, implementation details,
and additional comparisons for Section~\ref{sec:experiments}.
Appendix~\ref{app:study1-details} verifies the nonmixing example and its
FDM bound. Appendix~\ref{app:study2-details} specifies the nonlinear
models and the exchangeable and corrected $t$ comparisons.
Appendix~\ref{app:study3-details} documents the long-memory experiment:
Appendix~\ref{app:study3-setup} gives its setup and memory-estimation results,
Appendix~\ref{app:study3-benchmarks} compares block sampling with moving-block
bootstrap and HAC inference, Appendix~\ref{app:study3-extra} gives the
additional $\beta=0.7$ results.

\subsection{Study 1: marginal coverage}
\label{app:study1-details}
We use 20,000 independent replications for each sample size and nominal
coverage level.
The stationary state $U_t$ is uniform on $[0,1]$. A score-threshold
event $|U_t^2-1/3|\le v$ is an interval in $U_t$. Replacing one
Bernoulli innovation changes $U_k$ by at most $2^{-k-1}$; crossing
either endpoint therefore gives
$\theta_k\le\min\{1,2^{(1-k)/2}\}$. The score CDF is continuous,
so the assumptions of Theorem~\ref{thm:marginal}(i) hold.
In contrast, whenever $V_j>1/3$, the state is recovered as
$U_j=\sqrt{V_j+1/3}$. Such events occur infinitely often, and
$U_{t-1}=2U_t\bmod1$ recovers $U_0$ from every future score tail.
A probability-$1/2$ event in $V_0$ belongs to both the score past and
every future tail, so the strong-mixing coefficients are at least
$1/4$. In particular, the model is not $\beta$-mixing.

The implementation uses a 128-bit binary state and exact integer score
comparisons. It approximates the infinite-past model; the score error
under the binary coupling is less than $2^{1-128}$. Nonmixing is a
property of the mathematical model, not a conclusion from finite-precision
simulation. If $k_n>n$, we use an infinite conformal cutoff.

\subsection{Study 2: realized-coverage inference for general time series}
\label{app:study2-details}
We use 10,000 independent trajectories for each model and sample size.
We consider two nonlinear models: ARCH(1),
$Y_t=\sqrt{0.5+0.5Y_{t-1}^2}\,\varepsilon_t$,
and stochastic volatility (SV),
$Y_t=e^{H_t/2-1/4}\varepsilon_t$ with
$H_t=0.9H_{t-1}+\sqrt{0.19}\,\eta_t$.
The innovations are independent standard normal variables, and the
score is $V_t=|Y_t|$.
ARCH trajectories start from zero and discard 2,000 observations;
SV trajectories start with an independent $H_0\sim N(0,1)$.
The respective indicator FDM bounds are $\theta_k\lesssim0.5^{k/6}$
and $\theta_k\lesssim0.9^{k/3}$, so both models meet the stronger
dependence condition in Proposition~\ref{prop:variance}.
The score densities are positive and continuous near the target
quantiles, and inter-time ties have probability zero.
Only complete calibration blocks enter the variance estimator;
the full calibration set of length $n$ determines the cutoff.
The QQ plots use quantile levels $0.01,0.02,\ldots,0.99$.
Both test levels use the same saved studentized statistics and
predictive miscoverage $\alpha=0.1$.

Both designs also meet the local conditional-density condition in
Theorem~\ref{thm:marginal}(ii). For ARCH the conditional scale is at
least $1/\sqrt2$, giving the folded-normal density bound $2/\sqrt\pi$.
For SV, for every $v>0$,
$\sup_{s>0}\sqrt{2/\pi}s^{-1}\exp\{-v^2/(2s^2)\}
=\sqrt{2/(\pi\mathrm e)}/v$.
Conditional mixing over scales preserves this bound on a neighborhood
of the positive target quantile. The marginal densities have a positive
local lower bound there.

\paragraph{Comparison with exchangeable and corrected $t$ references.}
\label{app:study2-comparisons}
Table~\ref{tab:study2-comparisons} compares references on the same trajectories.
The exchangeable reference divides the coverage error by
$\sqrt{\alpha(1-\alpha)(1/n+1/m)}$ and uses normal critical values.
With $B=\lfloor n/\ell\rfloor$ complete calibration blocks, the corrected
$t$ reference uses the same studentized statistic but replaces the normal
cutoff by $\sqrt{B/(B-1)}\,t_{B-1,1-\eta/2}$.
This allows for variance-estimation uncertainty with few blocks; the scale
factor accounts for divisor $B$ rather than $B-1$.
It is motivated by the fixed-$B$ asymptotic reference in
Appendix~\ref{app:general-variance}. No statistic is recentered or
empirically rescaled. At $n=2^{19}$, the exchangeable nominal-95\%
reference intervals contain realized coverage in only 83.96\% and
67.90\% of replications for ARCH and SV, respectively.

\begin{table}[htbp]
\caption{For Study 2, rejection rates (\%) under three references, using
the same saved trajectories. Columns specify the nominal test level.}
\label{tab:study2-comparisons}
\begin{center}
\small
\setlength{\tabcolsep}{4pt}
\input{experiment_assets/study2_comparisons_20260923/comparison_table.tex}
\end{center}
\end{table}

\subsection{Study 3: block sampling method under long memory}
\label{app:study3-details}
\subsubsection{Simulation setup}
\label{app:study3-setup}
For each $\beta$ and sample size, we use 10,000 independent trajectories
to evaluate the procedure and a separate sample of 10,000 independent
trajectories to estimate the reference distribution.
Stationary Gaussian trajectories are generated by FFT circulant
embedding using numerically evaluated covariances
$\gamma_\beta(k)=\sum_{j\ge0}(1+j)^{-\beta}(1+j+k)^{-\beta}$.
Thus the simulation approximates the infinite linear model through its
covariance function, rather than truncating each trajectory's past.
We use the procedure in Appendix~\ref{app:lm-procedure}, with signed
residuals $W_t=Y_t$, absolute scores $V_t=|Y_t|$, pilot length
$h=\lfloor\sqrt n\rfloor$, and coverage-block length
$b=\lfloor n^{2/3}\rfloor$. The $n^{2/3}$ choice also appears in
long-memory covariance inference \citep{zhai2026simultaneous}; longer
blocks capture dependence over longer spans while $b/n\to0$.
These choices of $h$ and $b$ are simple and satisfy our asymptotic conditions.
Since the residual model has mean zero,
we use uncentered block second moments to avoid the additional
estimation error associated with the slowly converging sample mean
under long memory.
The estimate $\widehat\beta$ uses only the calibration set and is
not clipped. It determines the same continuous normalization at
$N=b,n$; the true memory parameter is not supplied to the procedure.
We use equal adjacent calibration and test windows, $m=n$.
Every historical pair has lengths $b,b$ and uses its own exact conformal
rank; all $n-2b+1$ pairs lie within the calibration set.

The independent reference sample approximates the finite-$n$
distribution, not an imposed limiting Gaussian distribution.
For each calibration history we multiply its raw block errors by
$R_n(\widehat\beta)/R_b(\widehat\beta)$ and compare the resulting CDF with the
independent CDF of full-window realized-coverage errors.
No estimated normalization enters the reference CDF.
Table~\ref{tab:study3-benchmark-ks} reports the median of
this Kolmogorov distance over calibration histories.
The QQ plots use levels
$0.025,0.05,0.10,0.20,\ldots,0.90,0.95,0.975$.
The two-sided level-$\eta$ diagnostic rejects when the observed
full-window error lies strictly outside the scaled block quantiles at
$\eta/2$ and $1-\eta/2$.
Nominal prediction coverage remains $1-\alpha=0.9$ at both test levels.
Table~\ref{tab:study3-estimation} summarizes the
bias and RMSE of $\widehat\beta$.
No trajectories were discarded and no normalization failures occurred.

\begin{table}[htbp]
\caption{For Study 3, bias and root mean squared error (RMSE) of
$\widehat\beta$.
All entries are dimensionless.}
\label{tab:study3-estimation}
\begin{center}
\small
\setlength{\tabcolsep}{2.6pt}
\input{experiment_assets/study3_compact_20260925/beta_estimation.tex}
\end{center}
\end{table}

\subsubsection{Comparison with moving-block bootstrap and HAC inference}
\label{app:study3-benchmarks}
We compare block sampling with moving-block bootstrap (MBB)
\citep{kunsch1989jackknife} and Gaussian inference using a
heteroskedasticity and autocorrelation consistent (HAC) variance estimator
\citep{newey1987simple}. The comparison uses the same trajectories
in every setting with
$\beta\in\{0.9,0.75,0.6\}$ and $n\in\{2^{12},2^{14},2^{16},2^{18},2^{20}\}$,
with $m=n$. All methods estimate coverage uncertainty from the calibration
set only, and use the same independent reference sample.
Our block sampling procedure is unchanged and estimates $\beta$;
neither benchmark uses the true memory parameter or its regime.
The MBB block length and HAC bandwidth both equal the coverage-block
length, $L=b=\lfloor n^{2/3}\rfloor$.

MBB samples, independently with replacement, from the $n-L+1$ overlapping
length-$L$ blocks of calibration scores. Concatenating sampled blocks and
truncating at $2n$ gives one pseudo trajectory. We recompute the exact
conformal cutoff from its first $n$ scores and evaluate coverage on the
next $n$, allowing a sampled block to cross this boundary. Repeating this
999 times estimates the distribution of bootstrap realized coverage minus
$1-\alpha$. Bootstrap ties use the same non-strict coverage inequality as
the original procedure, and empirical quantiles use the left inverse CDF.

For HAC, let $I_t=\ind\{V_t\le\qhat\}$ and
$\bar I=n^{-1}\sum_{t=1}^n I_t$. We compute
$\widehat\gamma_k=n^{-1}\sum_{t=1}^{n-k}(I_t-\bar I)(I_{t+k}-\bar I)$
and the Bartlett estimator
$\widehat v_L=\widehat\gamma_0+
2\sum_{k=1}^L\{1-k/(L+1)\}\widehat\gamma_k$.
The reference distribution is $N(0,\widehat v_L(1/n+1/m))$,
which includes calibration uncertainty.

Table~\ref{tab:study3} reports 5\% rejection rates at
$n=2^{12},2^{16},2^{20}$; Table~\ref{tab:study3-rejection} gives the
remaining settings. Table~\ref{tab:study3-benchmark10} gives 10\% rates
over the full grid. The additional $\beta=0.7$ results use block sampling
only; dashes mark benchmarks not run in this setting.
The Monte Carlo unit is an independent trajectory, not an overlapping
block or bootstrap draw. No trajectory failed and no HAC variance was zero.

\begin{table}[htbp]
\caption{For Study 3, supplementary 5\% rejection rates (\%).
The upper panel completes Table~\ref{tab:study3}; the lower panel gives
the additional $\beta=0.7$ block sampling results.}
\label{tab:study3-rejection}
\begin{center}
\small
\setlength{\tabcolsep}{4pt}
\input{experiment_assets/study3_benchmarks_10k_20260925/benchmark_rejection_05_supplement.tex}
\end{center}
\end{table}

\begin{table}[htbp]
\caption{For Study 3, rejection rates (\%) at the 10\% test level.
BS denotes block sampling; all methods use length
$b=\lfloor n^{2/3}\rfloor$.}
\label{tab:study3-benchmark10}
\begin{center}
\small
\setlength{\tabcolsep}{2pt}
\input{experiment_assets/study3_benchmarks_10k_20260925/benchmark_rejection_10.tex}
\end{center}
\end{table}

Figure~\ref{fig:study3-benchmark-qq} compares estimated coverage-error
quantiles with the independent reference. Under stronger memory, the
benchmark quantiles are too concentrated near zero, whereas block sampling
better captures their spread. Tables~\ref{tab:study3-benchmark-ks}
and~\ref{tab:study3-benchmark-width} quantify this comparison through
Kolmogorov distances and interval widths across all five sample sizes.
At $\beta=0.6$ and $n=2^{20}$,
the 10\% rejection rate is 10.34\% for block sampling, compared with
51.78\% for MBB and 51.16\% for HAC.
Thus the benchmark methods substantially underestimate coverage uncertainty
under strong memory, while block sampling substantially improves test calibration.

\begin{figure}[htbp]
\centering
\includegraphics[width=\linewidth]{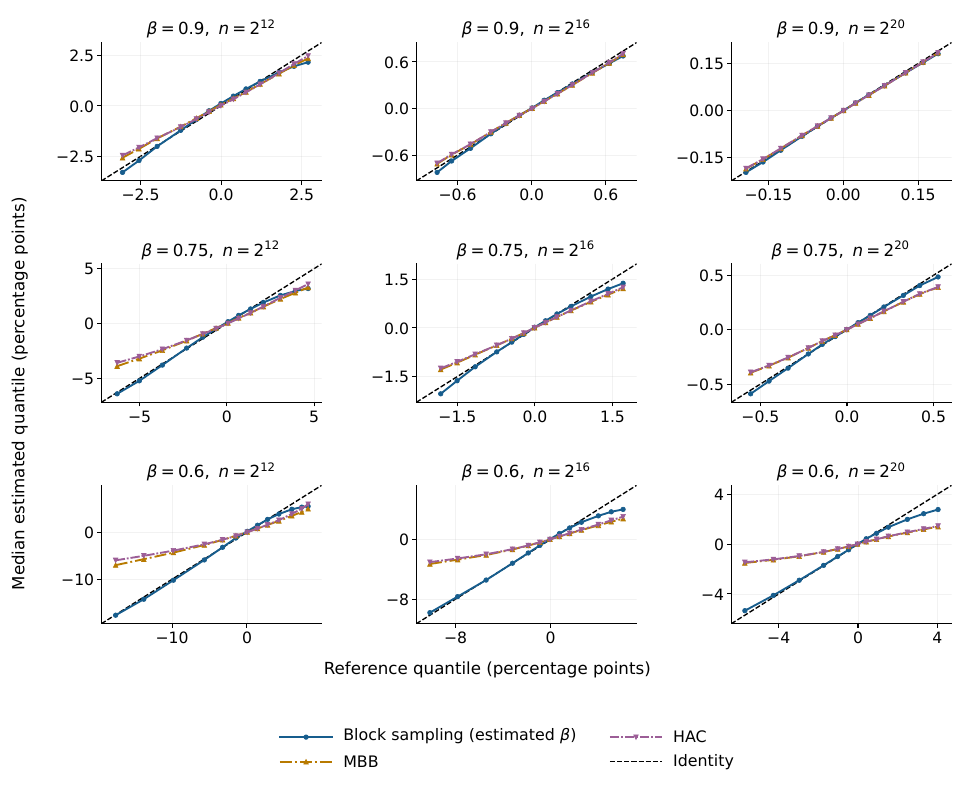}
\caption{For Study 3, comparative QQ plots of realized-coverage errors
(percentage points). Each curve gives the median estimated quantile across
paired histories, against the corresponding quantile of the
independent reference sample. Closeness to the dashed
equality line indicates agreement with the finite-$n$ coverage-error
distribution. Rows give $\beta$ and columns give $n$.}
\label{fig:study3-benchmark-qq}
\end{figure}

\begin{table}[htbp]
\caption{For Study 3, median Kolmogorov distance from the independent
finite-$n$ reference CDF over simulation histories.
BS denotes block sampling. Smaller values indicate closer distributional
agreement.}
\label{tab:study3-benchmark-ks}
\begin{center}
\small
\setlength{\tabcolsep}{2pt}
\input{experiment_assets/study3_benchmarks_10k_20260925/benchmark_median_ks.tex}
\end{center}
\end{table}

\begin{table}[tp]
\caption{For Study 3, median estimated 95\% coverage-error interval widths
over paired histories, in percentage points.
BS denotes block sampling; Ref. is the 97.5th minus 2.5th quantile of the
independent reference sample. Widths below Ref. indicate
underestimation of coverage uncertainty.}
\label{tab:study3-benchmark-width}
\begin{center}
\small
\setlength{\tabcolsep}{3pt}
\input{experiment_assets/study3_benchmarks_10k_20260925/benchmark_width.tex}
\end{center}
\end{table}

\subsubsection{Additional ultra-long-memory results}
\label{app:study3-extra}
We also study $\beta=0.7$, another ultra-long-memory model with a
non-Gaussian limit. Its coverage-error scale is $n^{-0.4}$, so its
memory is less persistent than that of the $\beta=0.6$ example in the
main text. The procedure and sample sizes are unchanged.
Figure~\ref{fig:study3-beta07} and Tables~\ref{tab:study3-rejection}
and~\ref{tab:study3-benchmark10}
give the full sample-size comparison. The median Kolmogorov distance
decreases from 0.1467 to 0.0572; at $n=2^{20}$, the 5\% and 10\%
rejection rates are 4.57\% and 8.37\%, respectively, both below nominal.
Distributional approximation improves with $n$ for
both ultra-long-memory examples, but more slowly for $\beta=0.6$.

\begin{figure}[htbp]
\centering
\includegraphics[width=\linewidth]{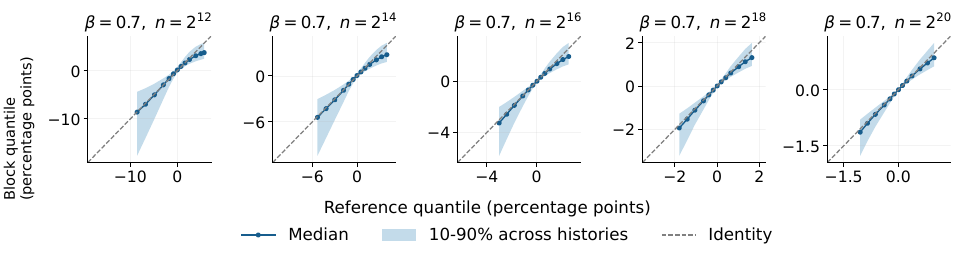}
\caption{For Study 3, block versus independent finite-$n$
reference error quantiles at $\beta=0.7$ (percentage points). Curves and shading have the same
interpretation as in Figure~\ref{fig:study3}.}
\label{fig:study3-beta07}
\end{figure}

%% file: experiment_assets/study2_comparisons_20260923/comparison_table.tex
\begin{tabular}{llrrrrrr}
\toprule
& & \multicolumn{2}{c}{Normal} & \multicolumn{2}{c}{Exchangeable SE} & \multicolumn{2}{c}{Corrected $t$} \\
Model & $n$ & 5\% & 10\% & 5\% & 10\% & 5\% & 10\% \\
\midrule
ARCH & $2^{9}$ & 10.54 & 16.27 & 14.38 & 23.19 & 4.75 & 9.60 \\
ARCH & $2^{11}$ & 9.12 & 14.67 & 15.86 & 24.20 & 5.39 & 10.37 \\
ARCH & $2^{13}$ & 7.48 & 12.79 & 15.77 & 23.39 & 5.31 & 10.34 \\
ARCH & $2^{15}$ & 6.40 & 11.81 & 16.15 & 24.21 & 5.08 & 9.95 \\
ARCH & $2^{17}$ & 6.02 & 11.50 & 16.14 & 23.69 & 5.29 & 10.45 \\
ARCH & $2^{19}$ & 5.49 & 10.77 & 16.04 & 24.26 & 5.03 & 10.11 \\
\midrule
SV & $2^{9}$ & 12.92 & 19.65 & 30.57 & 40.59 & 6.19 & 11.89 \\
SV & $2^{11}$ & 8.98 & 14.87 & 31.63 & 40.42 & 5.37 & 10.28 \\
SV & $2^{13}$ & 7.60 & 13.20 & 32.58 & 41.45 & 5.36 & 10.74 \\
SV & $2^{15}$ & 6.03 & 11.25 & 31.45 & 39.71 & 4.89 & 9.86 \\
SV & $2^{17}$ & 5.75 & 11.04 & 32.10 & 40.56 & 4.79 & 9.94 \\
SV & $2^{19}$ & 5.61 & 10.94 & 32.10 & 39.96 & 5.15 & 10.24 \\
\bottomrule
\end{tabular}

%% file: experiment_assets/study3_compact_20260925/beta_estimation.tex
\begin{tabular}{crrrrrrrrrr}
\toprule
$n=m$ & \multicolumn{2}{c}{$2^{12}$} & \multicolumn{2}{c}{$2^{14}$} & \multicolumn{2}{c}{$2^{16}$} & \multicolumn{2}{c}{$2^{18}$} & \multicolumn{2}{c}{$2^{20}$} \\
\cmidrule(lr){2-3}\cmidrule(lr){4-5}\cmidrule(lr){6-7}\cmidrule(lr){8-9}\cmidrule(lr){10-11}
$\beta$ & Bias & RMSE & Bias & RMSE & Bias & RMSE & Bias & RMSE & Bias & RMSE \\
\midrule
0.9 & -0.1283 & 0.1452 & -0.1177 & 0.1275 & -0.1060 & 0.1116 & -0.0943 & 0.0975 & -0.0840 & 0.0859 \\
0.75 & -0.0542 & 0.0849 & -0.0497 & 0.0695 & -0.0436 & 0.0569 & -0.0381 & 0.0472 & -0.0325 & 0.0384 \\
0.7 & -0.0304 & 0.0710 & -0.0296 & 0.0572 & -0.0271 & 0.0466 & -0.0236 & 0.0377 & -0.0200 & 0.0307 \\
0.6 & 0.0099 & 0.0607 & 0.0070 & 0.0490 & 0.0045 & 0.0398 & 0.0031 & 0.0338 & 0.0024 & 0.0291 \\
\bottomrule
\end{tabular}

%% file: experiment_assets/study3_benchmarks_10k_20260925/benchmark_rejection_05_supplement.tex
\begin{tabular}{crrrrrr}
\toprule
$n=m$ & \multicolumn{3}{c}{$2^{14}$} & \multicolumn{3}{c}{$2^{18}$} \\
\cmidrule(lr){2-4}\cmidrule(lr){5-7}
$\beta$ & BS & MBB & HAC & BS & MBB & HAC \\
\midrule
0.9 & 6.55 & 8.26 & 8.14 & 5.29 & 6.07 & 6.20 \\
0.75 & 7.24 & 15.14 & 14.68 & 4.93 & 14.31 & 14.35 \\
0.6 & 9.94 & 34.38 & 31.45 & 6.88 & 40.80 & 39.88 \\
\bottomrule
\end{tabular}
\par\smallskip
\begin{tabular}{crrrrr}
\toprule
$n=m$ & $2^{12}$ & $2^{14}$ & $2^{16}$ & $2^{18}$ & $2^{20}$ \\
\midrule
$\beta=0.7$, BS & 10.68 & 7.83 & 5.79 & 5.20 & 4.57 \\
\bottomrule
\end{tabular}

%% file: experiment_assets/study3_benchmarks_10k_20260925/benchmark_rejection_10.tex
\begin{tabular}{crrrrrrrrrrrrrrr}
\toprule
$n=m$ & \multicolumn{3}{c}{$2^{12}$} & \multicolumn{3}{c}{$2^{14}$} & \multicolumn{3}{c}{$2^{16}$} & \multicolumn{3}{c}{$2^{18}$} & \multicolumn{3}{c}{$2^{20}$} \\
\cmidrule(lr){2-4}\cmidrule(lr){5-7}\cmidrule(lr){8-10}\cmidrule(lr){11-13}\cmidrule(lr){14-16}
$\beta$ & BS & MBB & HAC & BS & MBB & HAC & BS & MBB & HAC & BS & MBB & HAC & BS & MBB & HAC \\
\midrule
0.9 & 13.40 & 17.13 & 16.82 & 11.16 & 14.26 & 14.16 & 10.14 & 12.07 & 12.14 & 10.19 & 11.28 & 11.36 & 10.12 & 10.88 & 10.64 \\
0.75 & 14.22 & 24.45 & 22.81 & 11.29 & 22.27 & 21.63 & 10.60 & 21.91 & 21.64 & 9.78 & 21.37 & 21.37 & 8.56 & 19.80 & 19.82 \\
0.7 & 14.44 & --- & --- & 11.99 & --- & --- & 10.26 & --- & --- & 9.46 & --- & --- & 8.37 & --- & --- \\
0.6 & 17.18 & 39.81 & 36.17 & 14.35 & 42.17 & 39.67 & 12.48 & 45.26 & 43.96 & 11.32 & 48.22 & 47.72 & 10.34 & 51.78 & 51.16 \\
\bottomrule
\end{tabular}

%% file: experiment_assets/study3_benchmarks_10k_20260925/benchmark_median_ks.tex
\begin{tabular}{crrrrrrrrrrrrrrr}
\toprule
$n=m$ & \multicolumn{3}{c}{$2^{12}$} & \multicolumn{3}{c}{$2^{14}$} & \multicolumn{3}{c}{$2^{16}$} & \multicolumn{3}{c}{$2^{18}$} & \multicolumn{3}{c}{$2^{20}$} \\
\cmidrule(lr){2-4}\cmidrule(lr){5-7}\cmidrule(lr){8-10}\cmidrule(lr){11-13}\cmidrule(lr){14-16}
$\beta$ & BS & MBB & HAC & BS & MBB & HAC & BS & MBB & HAC & BS & MBB & HAC & BS & MBB & HAC \\
\midrule
0.9 & 0.108 & 0.060 & 0.054 & 0.074 & 0.048 & 0.036 & 0.052 & 0.043 & 0.027 & 0.036 & 0.036 & 0.021 & 0.027 & 0.034 & 0.015 \\
0.75 & 0.133 & 0.098 & 0.100 & 0.103 & 0.088 & 0.076 & 0.077 & 0.085 & 0.079 & 0.060 & 0.082 & 0.074 & 0.045 & 0.078 & 0.069 \\
0.7 & 0.147 & --- & --- & 0.114 & --- & --- & 0.088 & --- & --- & 0.075 & --- & --- & 0.057 & --- & --- \\
0.6 & 0.168 & 0.164 & 0.178 & 0.139 & 0.170 & 0.175 & 0.119 & 0.190 & 0.195 & 0.099 & 0.203 & 0.207 & 0.093 & 0.220 & 0.220 \\
\bottomrule
\end{tabular}

%% file: experiment_assets/study3_benchmarks_10k_20260925/benchmark_width.tex
\begin{tabular}{crrrrrrrrrrrr}
\toprule
$n=m$ & \multicolumn{4}{c}{$\beta=0.9$} & \multicolumn{4}{c}{$\beta=0.75$} & \multicolumn{4}{c}{$\beta=0.6$} \\
\cmidrule(lr){2-5}\cmidrule(lr){6-9}\cmidrule(lr){10-13}
 & BS & MBB & HAC & Ref. & BS & MBB & HAC & Ref. & BS & MBB & HAC & Ref. \\
\midrule
$2^{12}$ & 5.453 & 4.907 & 4.915 & 5.762 & 9.663 & 7.178 & 7.175 & 10.938 & 23.254 & 11.987 & 11.941 & 25.708 \\
$2^{14}$ & 2.928 & 2.667 & 2.664 & 3.009 & 5.904 & 4.297 & 4.303 & 6.354 & 18.289 & 8.557 & 8.597 & 20.398 \\
$2^{16}$ & 1.500 & 1.411 & 1.409 & 1.505 & 3.399 & 2.496 & 2.494 & 3.528 & 13.868 & 6.082 & 6.099 & 16.280 \\
$2^{18}$ & 0.755 & 0.727 & 0.724 & 0.766 & 1.946 & 1.418 & 1.417 & 2.010 & 10.693 & 4.273 & 4.262 & 12.744 \\
$2^{20}$ & 0.379 & 0.370 & 0.369 & 0.385 & 1.073 & 0.785 & 0.784 & 1.082 & 8.135 & 2.920 & 2.920 & 9.748 \\
\bottomrule
\end{tabular}

%% file: app_real_data.tex
\newpage
\section{Realized coverage of electricity-demand forecast intervals}
\label{app:real-data-isne}

We use real-world electricity-demand data to illustrate why temporal
dependence matters when deciding whether an observed departure from
nominal coverage is unusual.

\subsection{Setting}
\label{app:isne-setting}

This study examines realized-coverage inference for hourly electricity
demand in ISO New England. Day-ahead demand forecasts inform market
decisions and next-day operating plans.\footnote{ISO New England,
\emph{2023 Annual Markets Report}, Section~3.4.3:
\url{https://www.iso-ne.com/static-assets/documents/100011/2023-annual-markets-report.pdf}.}
For prediction intervals targeting 90\% coverage, a practitioner may
observe coverage above or below this level over the next year. The
question is whether this departure indicates miscalibration or is
consistent with sampling variation. We compare the exchangeable
reference with our block sampling method, using the same prediction
intervals throughout.

We use hourly demand and day-ahead demand forecasts from EIA's Hourly
Electric Grid Monitor, selecting the reported demand and forecast fields
for ISO New England.\footnote{Data:
\url{https://www.eia.gov/electricity/gridmonitor/}; UTC hour-ending
convention: \url{https://www.eia.gov/survey/form/eia_930/instructions.pdf}.}
The data are divided into three consecutive windows of 8,760 hours for
training, calibration, and testing. Training starts on January~1, 2023,
at 01:00 UTC. Calibration starts on January~1, 2024, at 01:00 UTC, and testing
starts on December~31, 2024, at 01:00 UTC, ending on December~31, 2025, at
00:00 UTC. Thus $n=m=8{,}760$, with adjacent calibration and test windows.

To account for calendar effects, we regress the training forecast errors
on hour-of-day by weekday/weekend indicators and two annual sine/cosine
pairs. The fitted correction is added to the published point forecast
and is fixed over calibration and testing. We use the absolute corrected
forecast error as the nonconformity score and calibrate at
$1-\alpha=0.9$. The resulting cutoff is $\widehat q_n=548.66$ MW;
each prediction interval extends this distance on either side of the
corrected forecast. The cutoff is not updated during testing.

For realized-coverage inference, we use the block sampling method in
Appendix~\ref{app:lm-procedure}, with $h=\lfloor\sqrt n\rfloor=93$ for memory estimation
and $b=\lfloor n^{2/3}\rfloor=424$. All 7,913 pairs of adjacent
calibration and test blocks, each of length $b$, lie within the
calibration window. The block distribution is scaled to the full-window
distribution using $\widehat\beta=0.632$, estimated by the procedure in
Appendix~\ref{app:lm-procedure}. We assess realized
coverage using its 2.5\% and 97.5\% quantiles and compare the result
with the exchangeable Gaussian reference, whose standard error is
$\sqrt{\alpha(1-\alpha)(1/n+1/m)}$.
Both comparisons use a two-sided 5\% test level.

\subsection{Evidence of long memory}
\label{app:isne-memory}

The calendar correction reduces calibration RMSE by 28.55\%, but
persistent dependence remains in the corrected residuals. We examine it
using detrended fluctuation analysis (DFA).
For a stationary process with autocorrelation proportional to
$k^{2H-2}$ at large lag $k$, a Hurst exponent $1/2<H<1$ corresponds
to long memory: the autocorrelations are not summable.

We apply first-order DFA to the corrected calibration residuals
\citep{kantelhardt2001detecting}.
We use 280 overlapping windows of 2,048 hours, with starts one day apart.
Within each window, we form the cumulative centered sum, remove a linear
trend in each segment, and calculate the root-mean-square fluctuation
$F(s)$ at 20 logarithmically spaced segment lengths from 16 to 512 hours,
partitioning from both ends. The slope of the log--log fit
$F(s)\propto s^H$ gives one Hurst estimate per window.

\begin{figure}[htbp]
\centering
\includegraphics[width=\linewidth]{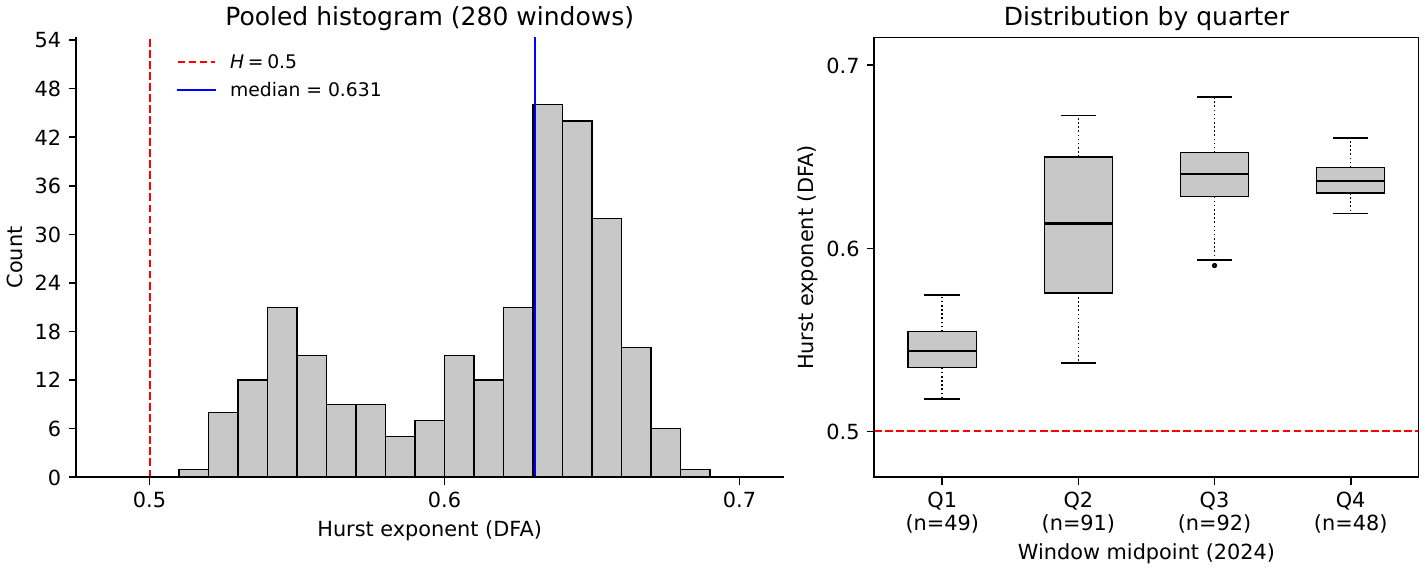}
\caption{For the electricity-demand study, Hurst estimates from corrected
calibration residuals. Left: histogram of all 280 window estimates, with
the median in blue. Right: boxplots grouped by the quarter of each
window's midpoint. Red dashed lines mark $H=0.5$. Boxes show the
interquartile range, with 1.5-IQR whiskers and outlying points.
These are summaries of overlapping windows, not confidence intervals.}
\label{fig:isne-memory}
\end{figure}

In Figure~\ref{fig:isne-memory}, all 280 window estimates exceed $0.5$,
with median 0.631 and range 0.518--0.683, providing diagnostic evidence
consistent with long memory after calendar correction.
These findings motivate accounting for persistent dependence when
assessing coverage. We use the long-memory model as a working
approximation and apply block sampling, which estimates coverage
fluctuations across the three regimes in Theorem~\ref{thm:blocks},
rather than imposing the exchangeable Gaussian reference.

\subsection{Results and interpretation}
\label{app:isne-results}

Realized coverage over the test window is 91.18\%--91.20\%, approximately
1.2 percentage points above nominal. The small range accounts for two
missing demand observations: 7,987 of the 8,758 observed responses are
covered, so the full-window fraction lies between $7987/8760$ and
$7989/8760$ without imputing or deleting any observations.

\begin{table}[htbp]
\centering
\caption{For the electricity-demand study, reference ranges for realized
coverage and two-sided testing against the 90\% nominal level. Decisions
are unchanged for every possible outcome of the two missing responses.
The reported $p$-values are approximate.}
\label{tab:isne-inference}
\begin{tabular}{lccc}
\toprule
Reference & 95\% coverage range & $p$-value & 5\% test decision\\
\midrule
Exchangeable Gaussian & [89.11\%, 90.89\%] & 0.0082--0.0095 & Reject\\
Block sampling & [84.36\%, 93.47\%] & 0.558 & Do not reject\\
\bottomrule
\end{tabular}
\end{table}

Table~\ref{tab:isne-inference} shows that the exchangeable reference
rejects at the 5\% level, whereas block sampling does not.
Figure~\ref{fig:isne-coverage} explains the difference: block sampling
assigns substantial probability to coverage values farther from 90\%,
giving a much wider reference range. The observed coverage is therefore
compatible with the fluctuations estimated by block sampling, although
it lies outside the exchangeable range.

\begin{figure}[htbp]
\centering
\includegraphics[width=0.82\linewidth]{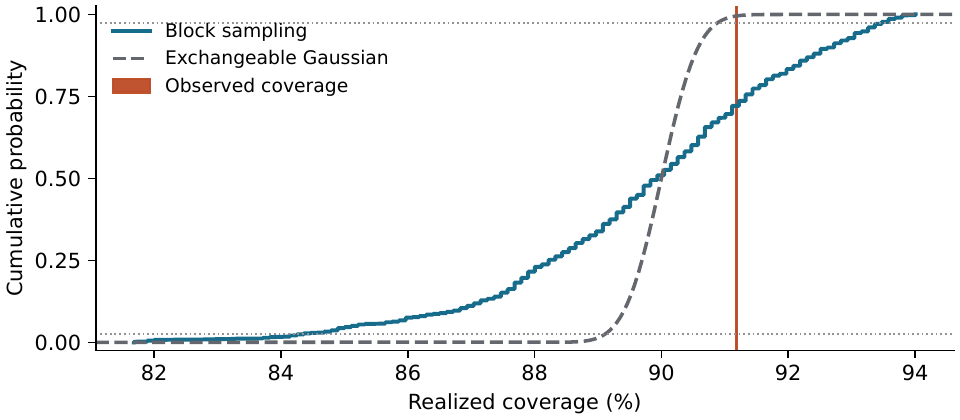}
\caption{For the electricity-demand study, the estimated cumulative
distribution of realized coverage from block sampling and the exchangeable
Gaussian reference. Dotted horizontal lines mark probabilities 0.025 and
0.975. The observed coverage (orange) lies between these quantiles of the
block distribution but above the exchangeable 97.5\% quantile.}
\label{fig:isne-coverage}
\end{figure}

This comparison illustrates the practical role of realized-coverage
inference. The forecasts, prediction intervals, and observed coverage
are identical in the two analyses; only the reference distribution
changes. Accounting for temporal dependence can therefore change
the conclusion of a coverage test.
Here, block sampling does not flag the approximately 1.2-percentage-point
overcoverage. Since our theoretical and simulation results support
better-calibrated tests when temporal dependence is taken into account,
we tend to believe that the block sampling test is more credible in this
setting and that the observed 91.2\% coverage does not provide statistically
significant evidence of a departure from the 90\% nominal level.
The result illustrates the importance of estimating
coverage uncertainty, rather than assessing a forecasting system from
its realized coverage alone.